\documentclass{article}

\usepackage[english]{babel}
\usepackage{amsmath,amsfonts,amssymb,amsthm}
\usepackage{latexsym}
\usepackage{makeidx}

\usepackage{vmargin}
\setmarginsrb{18mm}{16mm}{18mm}{25mm}%
             {10mm}{7mm}{10mm}{10mm}

\numberwithin{equation}{section}

\numberwithin{table}{section}

\usepackage{caption}

\usepackage[all]{xy}

\newtheorem{em-deff}{Definition}[section]
\newtheorem{lemma}[em-deff]{Lemma}
\newtheorem{theorem}[em-deff]{Theorem}
\newtheorem{corollary}[em-deff]{Corollary}

\newtheorem{proposition}[em-deff]{Proposition}
\newtheorem{em-fact}[em-deff]{Fact}
\newtheorem{em-example}[em-deff]{Example}

\newtheorem{problem}[em-deff]{Problem}

\newtheorem{em-remark}[em-deff]{Remark}
\newtheorem{question}[em-deff]{Question}

\newenvironment{example}{\begin{em-example} \em }{ \end{em-example}}
\newenvironment{remark}{\begin{em-remark} \em }{ \end{em-remark}}
\newenvironment{deff}{\begin{em-deff} \em }{ \end{em-deff}}
\newenvironment{fact}{\begin{em-fact} \em }{ \end{em-fact}}

\newcommand{\N}{\mathbb N}
\newcommand{\Z}{\mathbb Z}
\newcommand{\Q}{\mathbb Q}

\newcommand{\J}{\mathbb J}

\def\f{\varphi}
\def\ent{\mathrm{ent}}
\def\supp{\mathrm{supp}}
\def\Per{\mathrm{Per}}
\def\End{\mathrm{End}}

\def\cont{\mathfrak c}
\def\sc{\mathrm{sc}}

\title{Strings of group endomorphisms}

\author{
Dikran Dikranjan 
\\{\footnotesize {\tt  dikran.dikranjan@dimi.uniud.it}} 
\\{\footnotesize Dipartimento di Matematica e Informatica,}
\\{\footnotesize Universit\`{a} di Udine,}
\\{\footnotesize Via delle Scienze, 206 - 33100 Udine, Italy} 
\and
Anna Giordano Bruno 
\\{\footnotesize {\tt  anna.giordanobruno@math.unipd.it}} 
\\{\footnotesize Dipartimento di Matematica Pura e Applicata,}
\\{\footnotesize Universit\`a di Padova,}
\\{\footnotesize Via Trieste, 63 - 35121 Padova}
\and 
Simone Virili
\\{\footnotesize {\tt simone.virili@gmail.it}}
\\{\footnotesize Dipartimento di Matematica Pura e Applicata,}
\\{\footnotesize Universit\`a di Padova,}
\\{\footnotesize Via Trieste, 63 - 35121 Padova}
}

\date{}

\begin{document}

\maketitle


\abstract{Recently the strings and the string number of self-maps were used in the computation of the algebraic entropy of special group endomorphisms. We introduce two special kinds of strings, and their relative string numbers. We show that a dichotomy holds for all these three string numbers; in fact, they admit only zero and infinity as values on group endomorphisms.}

\section{Introduction}

The left Bernoulli shift and the two-sided Bernoulli shift are relevant examples for both ergodic theory and topological dynamics, while the right Bernoulli shift is fundamental for the theory of algebraic entropy (see Section \ref{ent-sec}, and for more details and properties see \cite{DGSZ}). We start giving their definitions. Let $K$ be an abelian group.
\begin{itemize}
\item[(a)]  The \emph{two-sided Bernoulli shift} ${}^t\!{\beta}_K$ of the group $K^{\mathbb Z}$ is  defined by 
$$ {}^t\!\beta_K((x_n)_{n\in\mathbb Z})=(x_{n-1})_{n\in\mathbb Z}, \mbox{ for } (x_n)_{n\in\mathbb Z}\in K^{\mathbb Z}.$$
\item[(b)] The \emph{right  Bernoulli shift} $\beta_K$ and the \emph{left Bernoulli shift} $_K\beta$ of the group $K^{\mathbb N}$ are defined respectively by 
$$\beta_K(x_1,x_2,x_3,\ldots)=(0,x_1,x_2,\ldots) \ \mbox{and}\ {}_K\beta(x_0,x_1,x_2,\ldots)=(x_1,x_2,x_3,\ldots).$$ 
\end{itemize}
Moreover, let ${}^t\!\beta_K^\oplus={}^t\!\beta_K\restriction_{K^{(\Z)}}$, $\beta_K^\oplus=\beta_k\restriction_{K^{(\N)}}$ and ${}_K\beta^\oplus={}_K\beta\restriction_{K^{(\N)}}$ be the respective restrictions on the direct sums.

The next definition given in \cite{AKH} is inspired by these classical notions of shift. 
For every set $\Gamma$, every self-map $\lambda:\Gamma\to\Gamma$ and for every abelian group $K$, the \emph{generalized shift} $\sigma_\lambda$ of the direct product $K^{\Gamma}$ is the endomorphism 
\begin{center}
$\sigma_\lambda:K^{\Gamma}\to K^{\Gamma}$, defined by $(x_i)_{i\in\Gamma}\mapsto(x_{\lambda(i)})_{i\in\Gamma}$.
\end{center}
The name of the generalized shifts comes from the fact that they generalize in an obvious sense the Bernoulli shifts; indeed, the left and the two-sided Bernoulli shifts are generalized shifts, while the right Bernoulli shift can be well approximated by a generalized shift of the same algebraic entropy (see \cite{AADGH}). In \cite{AADGH} the restriction $\sigma_\lambda^\oplus:=\sigma_\lambda\restriction_{K^{(\Gamma)}}:K^{(\Gamma)}\to K^{(\Gamma)}$ of $\sigma_\lambda$ to the direct sum $K^{(\Gamma)}$ was considered. Since $K^{(\Gamma)}$ is a $\sigma_\lambda$-invariant subgroup of $K^\Gamma$ if and only if $\lambda$ is a finite-to-one self-map of $\Gamma$ (that is, $\lambda^{-1}(x)$ is finite for every $x\in\Gamma$), one has to add this hypothesis in order to study $\sigma_\lambda^\oplus$ as an endomorphism of $K^{(\Gamma)}$.

\medskip
Other basic concepts related to dynamical arguments are the following. For a set $\Gamma$, a self-map $\lambda:\Gamma\to\Gamma$ and $x\in\Gamma$ the orbit of $x$ for $\lambda$ is $O(x)=\{\lambda^n(x):n\in\N\}$.

\begin{deff}
Let $\Gamma$ be a set and $\lambda:\Gamma\to \Gamma$ a self-map. 
\begin{itemize}
\item[(a)]An element $x\in \Gamma$ is a \emph{periodic} point of $\lambda$ if there exists an integer $n\in\N_+$ such that $\lambda^{n}(x)=x$. We will denote by $\Per(\lambda)$ the set of all periodic points of $\lambda$. Moreover, $\lambda$ is said to be \emph{locally periodic} if every $x\in\Gamma$ is a periodic point of $\lambda$ (i.e., $\Per(\lambda)=\Gamma$). Finally, $\lambda$ is said to be \emph{periodic} if there exists $n\in\N_+$ such that $\lambda^n(x)=x$ for every $x\in\Gamma$.
\item[(b)]An element $x\in \Gamma$ is a \emph{quasi-periodic} point of $\lambda$ if there exist $n< m$ in $\N$ such that $\lambda^{n}(x)=\lambda^{m}(x)$. We will denote by $Q\Per(\lambda)$ the set of all quasi-periodic points of $\lambda$. Moreover, $\lambda$ is said to be \emph{locally quasi-periodic} if every $x\in\Gamma$ is a quasi-periodic point of $\lambda$ (i.e., $\Gamma=Q\Per(\lambda)$). Finally, $\lambda$ is said to be \emph{quasi-periodic} if there exist $n<m$ in $\N$ such that $\lambda^n=\lambda^m$.
\end{itemize}
\end{deff}

\subsection{The string numbers}

The notion of string given in item (b) of the following definition was introduced in \cite{AADGH} in order to compute the algebraic entropy of the generalized shifts. We give first in item (a) a weaker concept, that will be used in the paper, and in item (c) we define a special class of strings which, after iterations of the map, ``form a loop''. 

\begin{deff}
Let $\Gamma$ be a set and $\lambda:\Gamma\to\Gamma$ a self-map. A sequence $S=\{x_n\}_{n\in\N}\subseteq \Gamma$ is
\begin{itemize}
\item[(a)] a \emph{pseudostring} of $\lambda$ if $\lambda(x_n)=x_{n-1}$ for every $n\in\N_+$ ($x_0$ is called first term);
\item[(b)] a \emph{string} of $\lambda$ if $S$ is a pseudostring such that the $x_n$'s are pairwise distinct elements;
\item[(c)] a \emph{singular} string  of $\lambda$ if $S$ is a string of $\lambda$ such that $x_0\in Q\Per(\lambda)$. 
\end{itemize}
\end{deff}

Note that a string $S$ such that there exists $k\in\N_+$ with $\lambda^k(x_0)\in S$ is a singular string. Clearly, a string $S$ of $\lambda$ in $\Gamma$ is singular if and only if $S\subseteq Q\Per(\lambda)$.

\smallskip
Considering an endomorphism of an abelian group, instead of a self-map of a set, it is possibile to introduce also the following special kind of singular strings.

\begin{deff}
If $\Gamma$ is an abelian group and $\lambda:\Gamma\to\Gamma$ is an endomorphism, a string $S=\{x_n\}_{n\in\N}$ is
\begin{itemize}
\item[(d)] a \emph{null string}, if $x_0\neq0$ and $\lambda^k(x_0)=0$ for some $k\in\N_+$.
\end{itemize}
\end{deff}

In \cite{AADGH} a cardinal function was defined to measure the number of pairwise disjoint strings of a self-map; indeed, a precise formula for the algebraic entropy of a generalized shift $\sigma_\lambda^\oplus$ was found, making use of the string number of $\lambda$ and its properties (see Section \ref{ent-sec} for the precise formula).
We recall the definition of string number in (b$'$), while in (c$'$) and (d$'$) we introduce similar cardinal functions that measure the number of pairwise disjoint non-singular strings of a self-map and the number of pairwise disjoint null strings of a group endomorphism respectively. In general we call these three cardinal functions ``string numbers''. 

\begin{deff}
Let $\Gamma$ be a set and $\lambda:\Gamma\to\Gamma$ a self-map. Let
\begin{itemize}
\item[(b$'$)] $s(\lambda)=\sup\left\{|\mathcal F|: \mathcal F\ \text{is a family of pairwise disjoint strings of $\lambda$ in $\Gamma$}\right\}$, the \emph{string number} of $\lambda$;
\item[(c$'$)] $ns(\lambda)=\text{sup}\left\{|\mathcal{F}|:\text{ $\mathcal{F}$ is a family of pairwise disjoint non-singular strings of $\lambda$ in $\Gamma$}\right\}$, the \emph{non-singular string number} of $\lambda$;
\item[(d$'$)] $s_0(\lambda)=\sup\left\{|\mathcal F|: \mathcal F\ \text{is a family of pairwise disjoint null strings of $\lambda$ in $\Gamma$}\right\}$, the \emph{null string number} of $\lambda$, if $\Gamma$ is an abelian group and $\lambda$ an endomorphism of $\Gamma$.
\end{itemize}
\end{deff}

As for the algebraic entropy, we want these string numbers to have values in $\N\cup\{\infty\}$; so, when the suprema in these definitions are infinite, we impose that they equal $\infty$, with the usual convention that $a+\infty=\infty$ for every $a\in\N\cup\{\infty\}$.

\medskip
The aim of this paper is to study the properties of these three cardinal functions, the relations among them, and to measure their values for endomorphisms of abelian groups. 

\medskip
It is easy to observe that $s(\lambda)\geq\max\{ns(\lambda),s_0(\lambda)\}$, for a group endomorphism $\lambda$. More precisely, Theorem \ref{s=ns+s0} shows the following relation among the three string numbers:
\begin{equation}\label{s=ns+s0-eq}
s(\lambda)=ns(\lambda)+s_0(\lambda).
\end{equation}

The following Theorem A characterizes the group endomorphisms admitting some string.
It gives an affirmative answer to \cite[Problem 6.6(a)]{AADGH}, which asked whether $s(\lambda)>0$ implies $s(\lambda)=\infty$ for any endomorphism $\lambda$ of abelian groups.

\smallskip
It will be shown in Section \ref{sur-core} that surjectivity is not a relevant restriction in the study of the string numbers. Indeed, for a set $\Gamma$ and a self-map $\lambda:\Gamma\to\Gamma$ we construct the maximum subset $\sc(\lambda)$ of $\Gamma$ on which the restriction of $\lambda$ is surjective, namely, the \emph{surjective core} of $\lambda$; moreover, all the strings of $\lambda$ are contained in the surjective core of $\lambda$ (see Proposition \ref{s-c}).

\medskip
\noindent{\bf Theorem A.} {\em 
Let $G$ be an abelian group and $\f\in\End(G)$. The following conditions are equivalent:
\begin{itemize}
\item[(i)] $s(\f)>0$;
\item[(ii)] $s(\f)=\infty$;
\item[(iii)] $\sc(\f)\not\subseteq\Per(\f)$. 
\end{itemize}}
\medskip

The following corollary characterizes the \emph{surjective} group endomorphisms of finite string number, showing that these are exactly the locally periodic ones.

\medskip
\noindent{\bf Corollary.}
{\em Let $G$ be an abelian group and $\f\in\End(G)$ surjective. The following conditions are equivalent:
\begin{itemize}
\item[(i)] $s(\f)=0$;
\item[(ii)] $s(\f)<\infty$;
\item[(iii)] $G=\Per(\f)$.\hfill $\qed$
\end{itemize}}
\medskip

Since a locally periodic endomorphism of a finitely generated abelian group is periodic, we obtain the following nice characterization of periodic endomorphisms of finitely generated abelian groups, which answers a question of Zanolin and Corvaja.
\begin{quote}
{\em Let $G$ be a finitely generated abelian group and $\f\in\End(G)$ surjective. Then $s(\f)=0$ if and only if $\f$ is periodic.}
\end{quote}
In other words, if $\f$ is not periodic, then $\f$ admits infinitely many strings.

\medskip
Theorems A$^*$ and A$^{**}$ in the sequel are the counterpart of Theorem A for the non-singular string number and the null string number respectively.

\medskip
\noindent{\bf Theorem A$^*$.} 
{\em Let $G$ be an abelian group and $\f\in\End(G)$. The following conditions are equivalent:
\begin{itemize}
\item[(i)] $ns(\f)>0$;
\item[(ii)] $ns(\f)=\infty$;
\item[(iii)] $\sc(\f)\not\subseteq Q\Per(\f)$. 
\end{itemize}}
\medskip

Analogously to the Corollary of Theorem A, we give the following consequence of Theorem A$^*$ for \emph{surjective} group endomorphisms $\f$, showing that $ns(\f)=0$ if and only if $\f$ is locally quasi-periodic.

\medskip
\noindent{\bf Corollary.}
{\em Let $G$ be an abelian group and $\f\in\End(G)$ surjective. The following conditions are equivalent:
\begin{itemize}
\item[(i)] $ns(\f)=0$;
\item[(ii)] $ns(\f)<\infty$;
\item[(iii)] $G= Q\Per(\f)$.\hfill $\qed$
\end{itemize}}
\medskip

To a certain extent, the next theorem goes in the opposite direction with respect to Theorem A$^*$, since the null strings are necessarily singular. For an abelian group $G$ and $\f\in\End(G)$, let $\ker_\infty\f=\bigcup_{n\in\N}\ker(\f^n)$ be the \emph{hyperkernel} of $\f$. The null strings $S$ of $\f$ are precisely the strings of $\f$ contained in $\ker_\infty\f$ (see Lemma \ref{null<->iper}).

\medskip
\noindent{\bf Theorem A$^{**}$.} 
{\em Let $G$ be an abelian group and $\f\in\End(G)$. The following conditions are equivalent:
\begin{itemize}
\item[(i)] $s_0(\f)>0$;
\item[(ii)] $s_0(\f)=\infty$;
\item[(iii)] $\sc(\f)\cap\ker_\infty\f\neq0$.
\end{itemize}}
\medskip
 
If $\f$ is surjective, then $\sc(\f)=G$, hence $\sc(\f)\cap \ker_\infty\f=G\cap \ker_\infty\f=\ker_\infty\f$. This gives the following corollary of Theorem A$^{**}$.

\medskip
\noindent{\bf Corollary.}
{\em Let $G$ be an abelian group and $\f\in\End(G)$ surjective. The following conditions are equivalent:
\begin{itemize}
\item[(i)] $s_0(\f)=0$;
\item[(ii)] $s_0(\f)<\infty$;
\item[(iii)] $\f$ is injective. \hfill $\qed$
\end{itemize}}
\medskip

\subsection{Algebraic entropy and string numbers}\label{ent-sec}

The strings and the string number arose in \cite{AADGH} in the computation of the algebraic entropy of generalized shifts. Here we recall the definition of algebraic entropy, given by Adler, Konheim and McAndrew \cite{AKM} and Weiss \cite{W}, and compare the basic properties of the algebraic entropy with the properties of the string numbers.

\medskip
Let $G$ be an abelian group and $F$ a finite subgroup of $G$; for an endomorphism $\f:G\to G$ and a positive integer $n$, let $T_n(\f,F)=F+\f(F)+\ldots+\f^{n-1}(F)$ be the \emph{$n$-th $\f$-trajectory} of $F$ with respect to $\f$. The \emph{algebraic entropy of $\f$ with respect to $F$} is $$H(\f,F)={\lim_{n\to \infty}\frac{\log|T_n(\f,F)|}{n}},$$ and the \emph{algebraic entropy} of $\f$ is $$\ent(\f)=\sup\{H(\f,F): F\ \text{is a finite subgroup of } G\}.$$

\smallskip
First we recall the precise formula found in \cite{AADGH} for the algebraic entropy of a generalized shift $\sigma_\lambda^\oplus:K^{(\Gamma)}\to K^{(\Gamma)}$, with $\lambda:\Gamma\to \Gamma$ a finite-to-one self-map of $\Gamma$, that is, $$\ent(\sigma_\lambda^\oplus)=s(\lambda)\cdot \log|K|.$$ Moreover, in \cite{GB} the algebraic entropy of a generalized shift $\sigma_\lambda$ of the direct product $K^\Gamma$ was computed, and also in this case the string number played a central role.

\medskip
The following are the basic properties of the algebraic entropy.

\begin{fact}\label{ent}\cite{DGSZ,W}
Let $G$ be an abelian group and $\f\in\End(G)$.
\begin{itemize}
\item[(A)](Conjugation by isomorphism) If $\f$ is conjugated to an endomorphism $\psi:H\to H$, of another abelian group $H$, by an isomorphism, then $\ent(\f) = \ent(\psi)$.
\item[(B)](Logarithmic law) For every non-negative integer $k$, $\ent(\f^k) = k \cdot \ent(\f)$. If $\f$ is an automorphism, then $\ent(\f^k)=|k|\cdot\ent(\f)$ for every integer $k$.
\item[(C)](Addition Theorem) If $G$ is torsion and $H$ is a $\f$-invariant subgroup of $G$, then $\ent(\f)=\ent(\f\restriction_H)+\ent(\overline\f)$, where $\overline\f:G/H\to G/H$  is the endomorphism induced by $\f$.
\item[(D)](Continuity for direct limits) If $G$ is direct limit of $\f$-invariant subgroups $\{G_i : i \in I\}$, then $\ent(\f)=\sup_{i\in I}\ent(\f\restriction_{G_i})$.
\item[(E)](Uniqueness) The algebraic entropy of the endomorphisms of the torsion abelian groups is characterized as the unique collection $h = \{h_G : G \text{ torsion abelian group}\}$ of functions $h_G:\End(G) \to \mathbb R_+$ that satisfy (A), (B), (C), (D) and $h_{\Z(p)^{(\N)}}(\beta^\oplus_{\Z(p)})=\log|\Z(p)|$ for every prime $p$.
\end{itemize}
\end{fact}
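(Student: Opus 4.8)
Since these properties are classical and established in \cite{DGSZ,W}, the task is to indicate how each is derived from the definition of $\ent$; I would proceed from the formal properties to the structural ones. Properties (A) and (D) are immediate. For (A), an isomorphism $\theta:G\to H$ conjugating $\f$ to $\psi$ restricts to a bijection between the finite subgroups and satisfies $\theta(T_n(\f,F))=T_n(\psi,\theta(F))$, so the cardinalities $|T_n|$ agree term by term; hence $H(\f,F)=H(\psi,\theta(F))$ and $\ent(\f)=\ent(\psi)$. For (D), any finite subgroup $F$ of a direct limit $G$ of $\f$-invariant subgroups $\{G_i\}$ is finitely generated, hence lies in some $G_{i_0}$, so that $T_n(\f,F)\subseteq G_{i_0}$ for all $n$ and $H(\f,F)=H(\f\restriction_{G_{i_0}},F)$; taking the supremum over finite subgroups gives $\ent(\f)=\sup_i\ent(\f\restriction_{G_i})$.

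For the logarithmic law (B), I would rely on the identity $T_{nk}(\f,F)=T_n(\f^k,F')$ with $F'=T_k(\f,F)$, together with the inclusion $T_n(\f^k,F)\subseteq T_{nk}(\f,F)$. The inclusion gives $H(\f^k,F)\le k\,H(\f,F)$, hence $\ent(\f^k)\le k\,\ent(\f)$, while the identity gives $H(\f^k,F')=k\,H(\f,F)$, hence $\ent(\f^k)\ge k\,\ent(\f)$; together these yield $\ent(\f^k)=k\,\ent(\f)$ for $k\in\N$. When $\f$ is an automorphism, one has $|T_n(\f^{-1},F)|=|T_n(\f,F)|$, since the automorphism $\f^{n-1}$ carries one trajectory onto the other, whence $\ent(\f^{-1})=\ent(\f)$; combining this with the previous case produces $\ent(\f^k)=|k|\,\ent(\f)$ for all $k\in\Z$.

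The Addition Theorem (C) is the main obstacle and the only place where torsionness is essential. For an $\f$-invariant $H\le G$ with induced map $\overline\f$ on $G/H$ and quotient map $\pi$, the short exact sequence $0\to T_n(\f,F)\cap H\to T_n(\f,F)\to T_n(\overline\f,\pi(F))\to0$ factors $|T_n(\f,F)|$ into an $H$-contribution and a $G/H$-contribution. The inequality $\ent(\f\restriction_H)+\ent(\overline\f)\le\ent(\f)$ is the easier half: given finite $F_1\le H$ and finite $\overline{F_2}\le G/H$, one lifts $\overline{F_2}$ to a finite subgroup $F_2\le G$ (using that $G$ is torsion, so finitely many generators have finite order) and bounds $|T_n(\f,F_1+F_2)|$ below by the product of the two separate trajectory sizes. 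The converse $\ent(\f)\le\ent(\f\restriction_H)+\ent(\overline\f)$ is harder: the $G/H$-contribution is exactly $|T_n(\overline\f,\pi(F))|$, but the $H$-contribution $T_n(\f,F)\cap H$ may strictly exceed $T_n(\f\restriction_H,F\cap H)$, and I expect that controlling this discrepancy, by absorbing it into a fixed finite subgroup $F'\le H$ together with a bounded shift of the index $n$, will be the technical heart of the argument.

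Finally, the uniqueness statement (E) is a characterization: any family $h=\{h_G\}$ obeying (A)--(D) and the normalization $h_{\Z(p)^{(\N)}}(\beta_{\Z(p)}^\oplus)=\log|\Z(p)|$ must coincide with $\ent$ on endomorphisms of torsion abelian groups. The plan is a structural reduction: by (D) one passes to finite subgroups and their direct limits; by (A) and (C) one decomposes along $\f$-invariant subgroups into primary and shift-like pieces; and the normalization together with (B) pins down the value on each Bernoulli shift $\beta_{\Z(p)}^\oplus$ and its powers. The difficulty is organizing this reduction so that every such endomorphism is reachable from the normalized generators using only the listed axioms, which is precisely what is carried out in \cite{DGSZ}; for the present purposes it therefore suffices to cite that reference, the sketch above recording the route.
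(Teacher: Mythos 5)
Your proposal cannot be compared against a proof in the paper, because the paper gives none: this statement is a \emph{Fact} quoted from the literature, and its entire justification there is the citation to \cite{DGSZ} and \cite{W}. Measured against that, your proposal is correct and in fact supplies more than the paper does. Your arguments for (A), (B) and (D) are the standard ones and are sound: conjugation carries $T_n(\f,F)$ bijectively onto $T_n(\psi,\theta(F))$; the identity $T_{nk}(\f,F)=T_n\bigl(\f^k,T_k(\f,F)\bigr)$ together with the inclusion $T_n(\f^k,F)\subseteq T_{nk}(\f,F)$ yields the logarithmic law, and the observation that $\f^{n-1}$ maps $T_n(\f^{-1},F)$ onto $T_n(\f,F)$ settles negative exponents; and a finite subgroup of a directed union of $\f$-invariant subgroups lies in one of them, so the trajectories are computed inside it. Your half of (C) is also correct: torsionness is used exactly where you say, to lift a finite subgroup of $G/H$ to a finite subgroup of $G$, and the exact sequence $0\to T_n(\f,F)\cap H\to T_n(\f,F)\to T_n(\overline\f,\pi(F))\to 0$ gives $\ent(\f)\geq\ent(\f\restriction_H)+\ent(\overline\f)$. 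You are likewise right that the reverse inequality in (C) and the uniqueness statement (E) are the genuinely deep points; they constitute the main theorems of \cite{DGSZ} and are not reproducible in a short sketch, so deferring them to that reference is precisely what the paper itself does. The only point worth tightening is the degenerate case $k=0$ in (B), which should be handled separately as $\ent(\mathrm{id}_G)=0$ with the convention $0\cdot\infty=0$, since the trajectory identity is vacuous there.
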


Some of these basic properties of the algebraic entropy have counterparts for the string numbers; indeed, the string numbers are stable under taking conjugation by group isomorphisms (see Lemma \ref{cbi}), and also a logarithmic law holds for the string numbers (see Corollary \ref{s>s*}).

Note that the validity of the Addition Theorem for a cardinal function implies monotonicity of the cardinal function for endomorphisms $\f$ of abelian groups $G$ under taking restrictions $\f\restriction_H$ to $\f$-invariant subgroups $H$ and under taking induced maps $\overline\f$ on quotients $G/H$. The string numbers are monotone under taking restrictions to invariant subgroups (see Lemma \ref{subgroups}). Moreover, the string number and the non-singular string number are monotone under taking the induced map $\overline\f$ on the quotient $G/H$ if $\f$ is surjective (see Theorem \ref{quotients}), while the null string number fails to have this property (see Example \ref{s0-non-monotone}). In general, all the three string numbers do not obey this monotonicity law (see Example \ref{Jp} for the string number and the non-singular string number), and so the Addition Theorem holds for none of the string numbers (see also Example \ref{CorZan}).

\smallskip
Furthermore, it is proved in \cite{DGSZ} that for torsion abelian groups the condition of local quasi-periodicity in item (iii) of the Corollary of Theorem A$^*$ is equivalent to $\ent(\f)=0$, and so we have the following new characterization of surjective group endomorphisms of algebraic entropy zero in terms of the non-singular string number. 

\medskip
\noindent{\bf Corollary.}
\emph{Let $G$ be a torsion abelian group and $\f\in\End(G)$ surjective. The following conditions are equivalent:
\begin{itemize}
\item[(i)] $ns(\f)=0$;
\item[(ii)] $ns(\f)<\infty$;
\item[(iii)] $\ent(\f)=0$.\hfill $\qed$
\end{itemize}}

Note that this corollary does not imply $ns(\f)=\ent(\f)$ for surjective endomorphisms of torsion abelian groups --- see the last row of Table \ref{table} below.
Nevertheless, if $G$ is a torsion abelian group and $\f\in\End(G)$ is surjective, then $$s(\f)\geq ns(\f)\geq \ent(\f).$$

\smallskip
Finally, the string numbers of the Bernoulli shifts are quite different from the algebraic entropy of the Bernoulli shifts; we calculate the values of the string numbers of the Bernoulli shifts in Example \ref{Example:shift}, while for the values of the algebraic entropy we refer to \cite{DGSZ}. We collect this information in Table \ref{table}, where we add also the values of the adjoint algebraic entropy on the Bernoulli shifts. 
The adjoint algebraic entropy was defined in \cite{DGS} substituting in the definition of the algebraic entropy the family of all finite subgroups with the family of finite-index subgroups. We give the precise definition: if $N$ is a finite-index subgroup of an abelian group $G$, $\f\in\End(G)$, and $n$ is a positive integer, the $n$-th $\f$-cotrajectory of $N$ is
$C_n(\f,N) = \frac{G}{N\cap\f^{-1}N\cap\ldots\cap\f^{-n+1}N}$.
The \emph{adjoint algebraic entropy of $\f$ with respect to $N$} is
$$H^\star(\f,N)=\lim_{n\to\infty}\frac{\log|C_n(\f,N)|}{n},$$
and the \emph{adjoint algebraic entropy} of $\f$ is
$$\ent^\star(\f)=\sup\{H^\star(\f,N): N\leq G,\ G/N\ \text{finite}\}.$$

\smallskip
Theorems A, A$^*$ and A$^{**}$ show that there is a dichotomy for the values of the three string numbers, which can be either zero or infinity. Since the same dichotomy holds for the values of the adjoint algebraic entropy, it is worthwhile to compare each of the string numbers also with the adjoint algebraic entropy.

\begin{center}
\begin{tabular}{|c|ccc|cc|}
\hline
 & $s(-)$ & $ns(-)$ & $s_0(-)$ & $\ent(-)$ & $\ent^\star(-)$ \\
 \hline
$\beta^\oplus_K$ & $0$  & $0$ & $0$ & $\log|K|$ & $\infty$ \\
${}_K\beta^\oplus$ & $\infty$ & $0$ & $\infty$ & $0$ & $\infty$ \\
$\overline\beta_K^\oplus$ & $\infty$ & $\infty$ & $0$ & $\log|K|$ & $\infty$ \\
\hline
\end{tabular}
\captionof{table}{values on Bernoulli shifts}\label{table}
\end{center}

\bigskip
In analogy with what is done for the algebraic entropy in \cite{DGSZ} and for the adjoint algebraic entropy in \cite{DGS}, we introduce the following notions of string numbers of an abelian group, noting that in this case it is sufficient to distinguish between value zero and value infinity, in view of Theorems A, A$^*$ and A$^{**}$.

\begin{deff}
Let $G$ be an abelian group. 
\begin{itemize}
\item[(b$''$)]The \emph{string number} of $G$ is $s(G)=\sup\{s(\f):\f\in\End(G)\}$.
\item[(c$''$)]The \emph{non-singular string number} of $G$ is $ns(G)=\sup\{ns(\f):\f\in\End(G)\}$.
\item[(d$''$)]The \emph{null string number} of $G$ is $s_0(G)=\sup\{s_0(\f):\f\in\End(G)\}$.
\end{itemize}
\end{deff}

We leave open the following problem, which will be discussed in \cite{DGV2}.

\begin{problem}
Describe the abelian groups $G$ that have $s(G)=0$ (respectively, $ns(G)=0$, $s_0(G)=0$).
\end{problem}

\subsubsection*{Acknowledgements}
It is a pleasure to thank the referee for her/his constructive criticism.

\section{General properties}

\subsection{The surjective core of a self-map}\label{sur-core}

For every self-map $\lambda: \Gamma \to \Gamma$ of a set $\Gamma$ one can be interested in those restrictions of $\lambda$ that are \emph{surjective}. In other words, one can consider $\lambda$-invariant subsets $\Lambda$ of $\Gamma$ such that the restriction $\lambda\restriction_\Lambda: \Lambda \to \Lambda$ is surjective. Such non-empty subsets $\Lambda$ need not exist, as the following example shows. This example motivates also the second part of Lemma \ref{sc-existence}.

\begin{example}
Let $\lambda:\N\to\N$ be defined by $\lambda(n)=n+1$ for every $n\in\N$. Then $\lambda\restriction_\Lambda:\Lambda\to\Lambda$ is not surjective for every non-empty $\lambda$-invariant subset $\Lambda$ of $\N$.
\end{example}

Nevertheless, something is easy to prove right away: there exists a biggest such $\Lambda$ (that can be empty), that we denote by $\sc(\lambda)$ and call \emph{surjective core of $\lambda$}. We denote by $\lambda^\sc$ the restriction of $\lambda$ to $\sc(\lambda)$. 
In the following lemma we give a constructive proof of the existence of the surjective core of a self-map.

\begin{lemma}\label{sc-existence}
Let $\Gamma$ be a set and $\lambda:\Gamma\to \Gamma$ a self-map. The surjective core $\sc(\lambda)$ of $\lambda$ exists.

If $\Gamma$ is a group and $\lambda\in\End(\Gamma)$, then $\sc(\lambda)$ is a subgroup of $\Gamma$, hence it is not empty.
\end{lemma}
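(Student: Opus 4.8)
The plan is to construct $\sc(\lambda)$ explicitly as the intersection of the images of the iterates of $\lambda$, and then verify that this intersection has all the required properties. Concretely, I would set
\[
\sc(\lambda)=\bigcap_{n\in\N}\lambda^n(\Gamma),
\]
where $\lambda^0(\Gamma)=\Gamma$. Since the sets $\lambda^n(\Gamma)$ form a decreasing chain $\Gamma\supseteq\lambda(\Gamma)\supseteq\lambda^2(\Gamma)\supseteq\cdots$, this is a natural candidate for the largest subset on which $\lambda$ can be surjective: any point surviving all the images is a point that can be ``pulled back'' arbitrarily far.

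First I would check that $\sc(\lambda)$ is $\lambda$-invariant and that $\lambda\restriction_{\sc(\lambda)}$ is surjective. Invariance is immediate, since $\lambda(\sc(\lambda))\subseteq\lambda(\lambda^n(\Gamma))=\lambda^{n+1}(\Gamma)$ for every $n$, whence $\lambda(\sc(\lambda))\subseteq\sc(\lambda)$. For surjectivity onto $\sc(\lambda)$, I would take $y\in\sc(\lambda)$ and show it has a preimage inside $\sc(\lambda)$: for each $n$ the set $\lambda^{-1}(y)\cap\lambda^n(\Gamma)$ is nonempty (because $y\in\lambda^{n+1}(\Gamma)$), and these preimage-sets are nested, so I want a single $x$ lying in all of them. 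This is exactly the step I expect to be the main obstacle: in the bare set-theoretic generality the fibers $\lambda^{-1}(y)$ may be infinite and the nested nonempty sets need not have a common point without a compactness or finiteness argument. The clean way around this, and the reason the authors flag the ``second part'' of the lemma, is the group case, where the fibers are cosets and one argues with subgroups rather than arbitrary nested sets.

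Next I would prove maximality: if $\Lambda$ is any $\lambda$-invariant subset with $\lambda\restriction_\Lambda$ surjective, then $\Lambda\subseteq\sc(\lambda)$. The point is that surjectivity of $\lambda\restriction_\Lambda$ forces $\Lambda=\lambda(\Lambda)=\lambda^n(\Lambda)\subseteq\lambda^n(\Gamma)$ for every $n$, hence $\Lambda\subseteq\bigcap_n\lambda^n(\Gamma)=\sc(\lambda)$. This simultaneously shows that $\sc(\lambda)$ contains every surjective-core candidate and, by the surjectivity established above, that it is itself one, so it is the biggest.

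For the group statement, I would specialize to $\lambda\in\End(\Gamma)$. Each $\lambda^n(\Gamma)$ is a subgroup (the image of a homomorphism), so $\sc(\lambda)=\bigcap_{n\in\N}\lambda^n(\Gamma)$ is an intersection of subgroups and hence a subgroup; in particular $0\in\sc(\lambda)$, so it is nonempty. The surjectivity step becomes transparent here: given $y\in\sc(\lambda)$, for each $n$ the fiber $\lambda^{-1}(y)\cap\lambda^n(\Gamma)$ is a coset of the subgroup $\ker\lambda\cap\lambda^n(\Gamma)$, and I would argue that these nested cosets share a common element, either by a direct diagonal choice using that $y\in\lambda^{n+1}(\Gamma)$ for all $n$, or by passing to the descending chain of the relevant subgroups. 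This yields a preimage of $y$ inside $\sc(\lambda)$, confirming that $\lambda^{\sc}$ is surjective and completing the proof that $\sc(\lambda)$ is a (nonempty) subgroup serving as the surjective core.
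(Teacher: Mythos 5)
There is a genuine gap, and it is exactly at the step you flagged: with $\sc(\lambda)$ defined as $\bigcap_{n\in\N}\lambda^n(\Gamma)$, the restriction of $\lambda$ to this set need not be surjective, and this cannot be repaired --- the claim is false not only for arbitrary self-maps but even for endomorphisms of abelian groups. Your maximality argument is correct (every $\lambda$-invariant subset on which $\lambda$ restricts surjectively lies inside the $\omega$-intersection), but the $\omega$-intersection itself need not be such a subset. Concretely, let $G=\langle a,x_1,x_2,\ldots \mid pa=0,\ p^nx_n=a\ (n\in\N_+)\rangle$ be Pr\"ufer's reduced abelian $p$-group of Ulm length $\omega+1$, and take $\lambda=\mu_p$. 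Then $a=p^{n+1}x_{n+1}\in p^nG$ for every $n$, and in fact $\bigcap_{n\in\N}p^nG=\langle a\rangle\cong\Z(p)$; but $\mu_p(\langle a\rangle)=0$, so $\mu_p$ is not surjective on the intersection. The same example defeats your proposed coset argument in the group case: the nested nonempty cosets $\mu_p^{-1}(a)\cap p^nG$ (the $n$-th one contains $p^nx_{n+1}$) have empty intersection, since that intersection equals $\mu_p^{-1}(a)\cap\langle a\rangle=\emptyset$. Nested nonempty cosets of a strictly decreasing chain of subgroups simply need not share a point. Here the true surjective core is $d_p(G)=0$, in accordance with Example \ref{sc-ex}(a), while your candidate $p^\omega G=\langle a\rangle$ is strictly bigger; the two agree for torsion-free groups (where $\mu_p$ is injective, so the fibers are singletons and your nesting argument does go through), which is why Example \ref{sc-ex}(c) can state the core as $p^\omega G$. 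Note also that you cannot defer the problematic step to the group case: the first assertion of the lemma is existence of $\sc(\lambda)$ for an arbitrary self-map of an arbitrary set; the group hypothesis enters only in the second assertion, that the core is a subgroup and hence nonempty.

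The paper's proof avoids all of this by iterating transfinitely rather than stopping at $\omega$: one defines $\lambda^\alpha(\Gamma)$ for every ordinal $\alpha$, applying $\lambda$ at successor steps and intersecting at limit steps. By a cardinality argument the decreasing chain stabilizes at some ordinal $\alpha_0$, and stabilization means precisely $\lambda(\lambda^{\alpha_0}(\Gamma))=\lambda^{\alpha_0+1}(\Gamma)=\lambda^{\alpha_0}(\Gamma)$, so surjectivity of the restriction is automatic at the fixed point of the chain; maximality follows by the same induction you ran up to $\omega$, extended through limit stages. In the Pr\"ufer example above the iteration genuinely continues past $\omega$: $\lambda^{\omega}(G)=\langle a\rangle$ and $\lambda^{\omega+1}(G)=0=\sc(\mu_p)$. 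So your construction computes the first term of the transfinite chain after the finite stages, not the surjective core itself.
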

\begin{proof}
We define by transfinite induction a transfinite decreasing chain. Let $\lambda^0(\Gamma)=\Gamma$. If $\alpha=\beta+1$ is a successor ordinal, then $\lambda^\alpha(\Gamma)=\lambda(\lambda^\beta(\Gamma))$, if $\alpha$ is a limit ordinal, then $\lambda^\alpha(\Gamma)=\bigcap_{\beta<\alpha}\lambda^\beta(\Gamma)$. Since this is a decreasing chain of subsets, it stabilizes, that is, there exists $\alpha_0$ such that $\lambda^\beta(\Gamma)=\lambda^{\alpha_0}(\Gamma)$ for every $\beta\geq\alpha_0$. We define $\sc(\lambda)=\lambda^{\alpha_0}(\Gamma)$; it is easy to check that this works. Obviously, if $\Gamma$ is a group and $\lambda\in\End(\Gamma)$, then $\sc(\lambda)$ is a subgroup of $\Gamma$, so it cannot be empty.
\end{proof}

The next result permits to reduce the study of the string numbers to the case of surjective endomorphisms.

\begin{proposition}\label{s-c}
Let $\Gamma$ be a set and $\lambda:\Gamma\to\Gamma$ a self-map. Then $s(\lambda) = s(\lambda^\sc)$ and $ns(\lambda)=ns(\lambda^\sc)$; moreover, if $\Gamma$ is an abelian group and $\lambda\in\End(\Gamma)$, then $s_0(\lambda)=s_0(\lambda^\sc)$. 
\end{proposition}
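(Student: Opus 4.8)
The plan is to prove first that every string of $\lambda$ is already contained in $\sc(\lambda)$, and then to observe that passing to the restriction $\lambda^\sc$ alters neither which sequences are strings nor whether a given string is singular or null. Once these are in place, all three equalities follow because the relevant families of pairwise disjoint strings for $\lambda$ and for $\lambda^\sc$ turn out to be literally the same.

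For the crucial step, let $S=\{x_n\}_{n\in\N}$ be a pseudostring of $\lambda$, so that $x_{n}=\lambda(x_{n+1})$ for every $n\in\N$. Using the transfinite decreasing chain $\{\lambda^\alpha(\Gamma)\}$ from the proof of Lemma \ref{sc-existence}, I would show by transfinite induction on $\alpha$ that $x_n\in\lambda^\alpha(\Gamma)$ for every $n\in\N$. The case $\alpha=0$ is trivial; at a successor $\alpha=\beta+1$ the inductive hypothesis gives $x_{n+1}\in\lambda^\beta(\Gamma)$, whence $x_n=\lambda(x_{n+1})\in\lambda(\lambda^\beta(\Gamma))=\lambda^{\alpha}(\Gamma)$; at a limit $\alpha$ we get $x_n\in\bigcap_{\beta<\alpha}\lambda^\beta(\Gamma)=\lambda^\alpha(\Gamma)$. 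Evaluating at the stabilizing ordinal $\alpha_0$ yields $S\subseteq\lambda^{\alpha_0}(\Gamma)=\sc(\lambda)$; in particular every string of $\lambda$ is contained in $\sc(\lambda)$.

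Next I would identify the strings of $\lambda$ with those of $\lambda^\sc$. Since $\sc(\lambda)$ is $\lambda$-invariant and $\lambda$ agrees with $\lambda^\sc$ on it, any string of $\lambda^\sc$ is a string of $\lambda$; conversely, by the previous step any string of $\lambda$ lies in $\sc(\lambda)$ and hence is a string of $\lambda^\sc$. Thus the strings of $\lambda$ and of $\lambda^\sc$ are the same subsets of $\Gamma$, so the families of pairwise disjoint strings coincide and the defining suprema agree, giving $s(\lambda)=s(\lambda^\sc)$.

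Finally, for $ns(\lambda)=ns(\lambda^\sc)$ and $s_0(\lambda)=s_0(\lambda^\sc)$ it remains to check that the distinguishing conditions survive the restriction. Because $\sc(\lambda)$ is $\lambda$-invariant, for any $x_0\in\sc(\lambda)$ one has $\lambda^n(x_0)=(\lambda^\sc)^n(x_0)$ for every $n$; hence $x_0\in Q\Per(\lambda)$ if and only if $x_0\in Q\Per(\lambda^\sc)$, so a string contained in $\sc(\lambda)$ is singular for $\lambda$ exactly when it is singular for $\lambda^\sc$, and the non-singular strings coincide. In the group case $\sc(\lambda)$ is a subgroup by Lemma \ref{sc-existence}, so $0\in\sc(\lambda)$ and the condition ``$x_0\neq0$ and $\lambda^k(x_0)=0$ for some $k\in\N_+$'' is preserved, whence the null strings coincide as well. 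The only genuine difficulty is the transfinite-induction step establishing containment in the surjective core; the remaining verifications are immediate consequences of the $\lambda$-invariance of $\sc(\lambda)$.
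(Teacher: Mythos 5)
Your proposal is correct and follows essentially the same route as the paper: the core of the paper's proof is exactly your transfinite induction showing that every (pseudo)string of $\lambda$ lies in $\lambda^\alpha(\Gamma)$ for all $\alpha$, hence in $\sc(\lambda)$. The remaining identifications (strings, non-singular strings, and null strings of $\lambda$ coincide with those of $\lambda^\sc$ by the $\lambda$-invariance of $\sc(\lambda)$), which you spell out explicitly, are left implicit in the paper.
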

\begin{proof}
We show by transfinite induction that if $S$ is a string of $\lambda$, then $S$ is contained in $\lambda^\alpha(\Gamma)$ for every cardinal $\alpha$, and this implies that $S$ is contained in $\sc(\lambda)$. If $\alpha=0$, clearly $S\subseteq\Gamma$. Assume that $\alpha=\beta+1$ is a successor ordinal and that $S\subseteq \lambda^\beta(\Gamma)$; since $S\subseteq\lambda(S)$, it follows that $S\subseteq \lambda(\lambda^\beta(\Gamma))=\lambda^\alpha(\Gamma)$. If $\alpha$ is a limit cardinal and $S\subseteq \lambda^\beta(\Gamma)$ for every $\beta<\alpha$, then $S\subseteq \lambda^\alpha(\Gamma)$. Hence $S\subseteq \sc(\lambda)$.
\end{proof}

We give now an example of calculation of the surjective core of a specific group endomorphism.

\begin{example}\label{sc-ex}
Let $p$ be a prime, let $G$ be an abelian group, and let $\mu_p\in\End(G)$ be the multiplication by $p$, that is, the endomorphism of $G$ defined by $\mu_p (x) = px$ for every $x\in G$. 
\begin{itemize}
\item[(a)] The surjective core of $\mu_p$ is $d_p(G)$, the maximum $p$-divisible subgroup of $G$. By Proposition \ref{s-c}, if $d_p(G)=0$ (i.e., $G$ is $p$-reduced), then $s(\mu_p)=0$.
\item[(b)] By (a), if $G$ is an abelian $p$-group, then the surjective core of $\mu_p$ is exactly $d(G)$, the maximum divisible subgroup of $G$; so, if $G$ is reduced, then $s(\mu_p)=0$.
\item[(c)] By (a), if $G$ is a torsion-free abelian group, then the surjective core of $\mu_p$ is precisely $p^\omega G$; so, if $p^\omega G = 0$, then $s(\mu_p) =0$.
\end{itemize}
\end{example}

The left Bernoulli shift is surjective and the two-sided Bernoulli shift is an automorphism, so their surjective cores coincide with their domains. In the following example we consider the right Bernoulli shift, which is not surjective, and which turns out to have trivial surjective core.

\begin{example}\label{sc-bernoulli}
Let $K=\Z(p)$ for some prime $p$, and consider the right Bernoulli shift $\beta_K^\oplus:K^{(\N)}\to K^{(\N)}$.
\begin{itemize}
\item[(a)] Then $\sc(\beta_K^\oplus)=0$.
\item[(b)] Consider the subgroup $H=\{x=(x_n)_{n\in\N}:\sum_{n\in\N}x_n=0\}$ of $K^{(\N)}$. Then the endomorphism $\overline\beta_K^\oplus:K^{(\N)}/H\to K^{(\N)}/H$ induced by $\beta_K^\oplus$ is the identity. Therefore $\sc(\overline\beta_K^\oplus)=K^{(\N)}/H$.
\end{itemize}
\end{example}

Item (b) of this example shows that the projection on the quotient of the surjective core of a group endomorphism can be strictly contained in the surjective core of the endomorphism induced on the quotient.

\subsection{Basic properties of strings and of string numbers}

The following lemma is a useful criterion in order to verify whether two (pseudo)strings are disjoint.

\begin{lemma}\label{disgiunte}
Let $G$ be an abelian group, $\f\in\End(G)$, $S=\{x_n\}_{n\in\N}$ and $S'=\{y_n\}_{n\in\N}$ two pseudostrings of $\f$ in $G$. Then $S$ and $S'$ are disjoint if and only if $x_0\notin S'$ and $y_0\notin S$.
\end{lemma}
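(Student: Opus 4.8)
The plan is to prove both implications of the biconditional. The forward direction is trivial: if $S$ and $S'$ are disjoint, then in particular $x_0\notin S'$ (since $x_0\in S$) and $y_0\notin S$ (since $y_0\in S'$), so the stated condition is necessary. The content lies entirely in the converse, so I would concentrate the argument there.

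For the converse, I would prove the contrapositive: assuming $S\cap S'\neq\emptyset$, I would show that either $x_0\in S'$ or $y_0\in S$. The key observation is that since these are pseudostrings, applying $\f$ moves terms \emph{down} the index: $\f(x_n)=x_{n-1}$ and $\f(y_n)=y_{n-1}$. So suppose some term coincides, say $x_m=y_k$ for some $m,k\in\N$. Applying $\f$ repeatedly to this common element pushes both sequences toward their first terms in lockstep. Concretely, if $m\leq k$, then applying $\f^m$ gives $x_0=\f^m(x_m)=\f^m(y_k)=y_{k-m}$, so $x_0\in S'$. Symmetrically, if $k\leq m$, then applying $\f^k$ gives $y_0=\f^k(y_k)=\f^k(x_m)=x_{m-k}\in S$. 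In either case the desired conclusion holds, completing the contrapositive.

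The one point requiring a little care is the joint behavior of the indices under iteration: I want to be sure that applying $\f$ to a common term $x_m=y_k$ simultaneously realizes the first term of one sequence as a member of the other, rather than merely producing a new coincidence deeper in both strings. This is handled cleanly by the case split on whether $m\leq k$ or $k\leq m$, using exactly $\min\{m,k\}$ applications of $\f$ so that one index is driven to $0$ while the other remains nonnegative; this is the only step where the pseudostring relation $\f(x_n)=x_{n-1}$ is genuinely used. I expect this index bookkeeping to be the main (and essentially only) obstacle, and it is minor.

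It is worth noting that the argument uses only the pseudostring property and not the distinctness of terms, so the criterion applies to pseudostrings in general, as stated; likewise the abelian group structure is not really exploited beyond providing the endomorphism $\f$, so the lemma is fundamentally a statement about orbits under a self-map. I would present the proof in prose with the two-line case analysis above, which is short enough not to warrant a displayed computation.
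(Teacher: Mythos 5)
Your proof is correct and follows exactly the paper's own argument: the trivial forward direction, then the contrapositive of the converse via a coincidence $x_m=y_k$, applying $\f^{\min\{m,k\}}$ to land the first term of one pseudostring inside the other. Your additional remarks (distinctness unused, group structure inessential) are accurate but do not change the substance; there is nothing to fix.
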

\begin{proof}
If the pseudostrings $S$ and $S'$ are disjoint, it follows immediately that $x_0\notin S'$ and $y_0\notin S$. Viceversa, suppose that $S$ and $S'$ meet non trivially. So $x_n=y_k$ for some $n,k\in\N$.
If $n\leq k$, then $x_0=\f^n(x_n)=\f^n(y_k)=y_{k-n}\in S'$. 
If $k\leq n$, then $y_0=\f^k(y_k)=\f^k(x_n)=x_{n-k}\in S$.
\end{proof}

Let $G$ be an abelian group, $\f\in\End(G)$, $H$ a $\f$-invariant subgroup of $G$, $S=\{x_n\}_{n\in\N}$ a pseudostring of $\f$ and $\pi:G\to G/H$ the canonical projection. Then $\pi(S)=\{\pi(x_n)\}_{n\in\N}$ is a pseudostring of $\overline\f$, where $\overline \f:G/H\to G/H$ is the endomorphism induced by $\f$. 

\begin{lemma}\label{quoziente}
Let $G$ be an abelian group, $\f\in\End(G)$, $H$ a $\f$-invariant subgroup of $G$, $\pi:G\to G/H$ the canonical projection and $\overline \f:G/H\to G/H$ the endomorphism induced by $\f$. Let $S=\{x_n\}_{n\in\N}$ and $S'=\{y_n\}_{n\in\N}$ be two pseudostrings of $\f$. 
\begin{itemize}
\item[(a)] If $\pi(S)=\{\pi(x_n)\}_{n\in\N}$ is a string of $\overline \f$, then $S$ is a string of $\f$.
\item[(b)] If $\pi(S)\cap\pi(S')=\emptyset$, then $S\cap S'=\emptyset$. \hfill$\qed$
\end{itemize}
\end{lemma}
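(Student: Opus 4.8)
The plan is to prove Lemma \ref{quoziente} directly from the definitions, exploiting the fact that $\pi$ is a homomorphism commuting with the shift structure of pseudostrings. The key observation, recorded just before the statement, is that $\pi(S)=\{\pi(x_n)\}_{n\in\N}$ is automatically a pseudostring of $\overline\f$ whenever $S$ is a pseudostring of $\f$: applying $\pi$ to the relation $\f(x_n)=x_{n-1}$ and using $\pi\circ\f=\overline\f\circ\pi$ yields $\overline\f(\pi(x_n))=\pi(x_{n-1})$. Both parts then amount to lifting a separation property from the quotient back to $G$ via the single fact that a homomorphism cannot separate points that it identifies, or, equivalently, that $\pi(x_n)=\pi(x_m)$ is a \emph{weaker} condition than $x_n=x_m$.

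For part (a), I would argue by contraposition on the defining condition of a string, namely pairwise distinctness of the terms. Suppose $S$ is \emph{not} a string; since $S$ is a pseudostring, this means the $x_n$'s are not pairwise distinct, so $x_n=x_m$ for some $n\neq m$. Applying $\pi$ gives $\pi(x_n)=\pi(x_m)$ with $n\neq m$, so the terms of $\pi(S)$ are not pairwise distinct either, and hence $\pi(S)$ is not a string of $\overline\f$. This is exactly the contrapositive of the claim, so whenever $\pi(S)$ is a string, $S$ must be a string as well.

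For part (b), I would likewise contrapose: assume $S\cap S'\neq\emptyset$ and deduce $\pi(S)\cap\pi(S')\neq\emptyset$. If $x_n=y_m$ for some $n,m\in\N$, then applying $\pi$ gives $\pi(x_n)=\pi(y_m)$, which is a common element of $\pi(S)$ and $\pi(S')$; thus $\pi(S)\cap\pi(S')\neq\emptyset$. Contraposing yields precisely the stated implication $\pi(S)\cap\pi(S')=\emptyset\Rightarrow S\cap S'=\emptyset$.

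Neither part presents a genuine obstacle; the content is entirely formal, resting on the functoriality $\pi\circ\f=\overline\f\circ\pi$ and the trivial fact that a map sends equal elements to equal elements. If anything, the only point requiring a word of care is the implicit use (in the paragraph preceding the lemma) that $\pi(S)$ is indeed a pseudostring of $\overline\f$, so that the statements ``$\pi(S)$ is a string'' and ``$\pi(S)\cap\pi(S')=\emptyset$'' are sensible; but this has already been established above and needs no repetition in the proof.
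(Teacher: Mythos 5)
Your proof is correct, and it matches the paper exactly in spirit: the paper states Lemma \ref{quoziente} with no proof at all (the \qed marks it as immediate), and your contrapositive verification---equal elements map to equal elements under $\pi$, after noting $\pi(S)$ is a pseudostring via $\pi\circ\f=\overline\f\circ\pi$---is precisely the routine argument the authors intended the reader to supply.
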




\begin{lemma}\label{cbi}
Let $\Gamma$ and $\Lambda$ be two sets, and $\xi:\Gamma\to \Lambda$ a bijection. Let $\lambda:\Gamma\to \Gamma$ and $\eta:\Lambda\to\Lambda$ be self-maps such that $\lambda=\xi^{-1} \eta\xi$. Then $s(\lambda)=s(\eta)$ and $ns(\lambda)=ns(\eta)$; if $\Gamma$ and $\Lambda$ are abelian groups, $\xi$ is an isomorphism, and $\lambda$ and $\eta$ are endomorphisms, then $s_0(\lambda)=s_0(\eta)$.  \hfill$\qed$
\end{lemma}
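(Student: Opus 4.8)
The plan is to show that the bijection $\xi$ transports strings of $\lambda$ to strings of $\eta$ (and conversely via $\xi^{-1}$) while preserving disjointness and each of the three distinguished string properties, so that $\xi$ induces a cardinality-preserving bijection between the families over which the respective suprema defining the string numbers are taken.

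First I would record the basic compatibility $\xi\lambda=\eta\xi$, and by immediate induction $\xi\lambda^k=\eta^k\xi$ for every $k\in\N$, which both follow from $\lambda=\xi^{-1}\eta\xi$. From this, if $S=\{x_n\}_{n\in\N}$ is a pseudostring of $\lambda$, i.e. $\lambda(x_n)=x_{n-1}$, then $\eta(\xi(x_n))=\xi(\lambda(x_n))=\xi(x_{n-1})$, so $\xi(S)=\{\xi(x_n)\}_{n\in\N}$ is a pseudostring of $\eta$. Since $\xi$ is injective, the terms $\xi(x_n)$ are pairwise distinct whenever the $x_n$ are, hence $\xi$ sends strings of $\lambda$ to strings of $\eta$; and again by injectivity it sends disjoint strings to disjoint strings (this also follows directly from the criterion of Lemma \ref{disgiunte}). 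Thus $\xi$ maps any family of pairwise disjoint strings of $\lambda$ to a family of pairwise disjoint strings of $\eta$ of the same cardinality, giving $s(\lambda)\le s(\eta)$. Applying the same argument to $\xi^{-1}$, which conjugates $\eta$ to $\lambda$, yields the reverse inequality, whence $s(\lambda)=s(\eta)$.

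For the non-singular string number I would check that $\xi$ preserves singularity. Using $\xi\lambda^k=\eta^k\xi$ and the injectivity of $\xi$, one gets $\lambda^n(x)=\lambda^m(x)$ if and only if $\eta^n(\xi(x))=\eta^m(\xi(x))$, so $\xi(Q\Per(\lambda))=Q\Per(\eta)$. Consequently a string $S$ of $\lambda$ is singular (equivalently $S\subseteq Q\Per(\lambda)$, as noted after the definition of singular string) if and only if $\xi(S)$ is singular; therefore $\xi$ restricts to a disjointness-preserving bijection between non-singular strings of $\lambda$ and of $\eta$, and the same supremum argument gives $ns(\lambda)=ns(\eta)$.

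For the null string number, with $\xi$ now an isomorphism of abelian groups, I would use $\xi(0)=0$ together with injectivity: $\xi(x_0)\ne0$ if and only if $x_0\ne0$, and $\eta^k(\xi(x_0))=\xi(\lambda^k(x_0))=0$ if and only if $\lambda^k(x_0)=0$. Hence $S$ is a null string of $\lambda$ exactly when $\xi(S)$ is a null string of $\eta$, and the supremum argument as above yields $s_0(\lambda)=s_0(\eta)$. There is no genuine obstacle here: the whole statement reduces to the routine verification that conjugation commutes with iteration and that a bijection (respectively, an isomorphism) preserves the set-theoretic (respectively, group-theoretic) conditions defining the three kinds of strings. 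The only point requiring a little care is the symmetric appeal to $\xi^{-1}$, which is what upgrades each inequality between suprema to the desired equality.
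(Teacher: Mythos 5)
Your proof is correct, and it is precisely the routine verification the paper has in mind: the lemma is stated there with no proof at all (just $\qed$), being regarded as immediate. Your argument --- transporting (pseudo)strings along $\xi$ via $\xi\lambda^k=\eta^k\xi$, checking that injectivity preserves distinctness, disjointness, quasi-periodicity and (for an isomorphism) nullity, and then using the symmetric argument with $\xi^{-1}$ to turn the inequality between suprema into an equality --- is exactly the intended one.
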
 


\begin{lemma}\label{subgroups}
Let $\Gamma$ be a set, $\lambda:\Gamma\to \Gamma$ a self-map and $\Lambda$ a $\lambda$-invariant subset of $\Gamma$. Then $s(\lambda)\geq s(\lambda\restriction_\Lambda)$ and $ns(\lambda)\geq ns(\lambda\restriction_\Lambda)$; if $\Gamma$ is an abelian group, $\lambda\in\End(\Gamma)$ and $\Lambda$ a subgroup of $\Gamma$, also $s_0(\lambda)\geq s_0(\lambda\restriction_\Lambda)$. \hfill$\qed$
\end{lemma}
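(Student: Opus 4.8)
The plan is to show that restricting to $\Lambda$ can only shrink the supply of (non-singular, resp.\ null) strings, so each of the three inequalities follows by exhibiting every admissible family for $\lambda\restriction_\Lambda$ as an admissible family for $\lambda$. The single ingredient used throughout is the invariance identity: since $\Lambda$ is $\lambda$-invariant, $(\lambda\restriction_\Lambda)^n(x)=\lambda^n(x)$ for all $x\in\Lambda$ and $n\in\N$.

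First I would prove the inequality $s(\lambda)\geq s(\lambda\restriction_\Lambda)$. If $S=\{x_n\}_{n\in\N}$ is a string of $\lambda\restriction_\Lambda$, then its terms lie in $\Lambda\subseteq\Gamma$ and satisfy $\lambda(x_n)=(\lambda\restriction_\Lambda)(x_n)=x_{n-1}$ for every $n\in\N_+$, so $S$ is a pseudostring of $\lambda$; as its terms are pairwise distinct, $S$ is a string of $\lambda$, with the same underlying set. Consequently, given any family $\mathcal F$ of pairwise disjoint strings of $\lambda\restriction_\Lambda$, the members of $\mathcal F$ are strings of $\lambda$ whose underlying sets are unchanged, hence still pairwise disjoint; thus $|\mathcal F|\leq s(\lambda)$, and taking the supremum over $\mathcal F$ yields $s(\lambda)\geq s(\lambda\restriction_\Lambda)$.

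For the remaining two inequalities it suffices to check that the relevant \emph{type} of a string is preserved in passing from $\lambda\restriction_\Lambda$ to $\lambda$; the counting argument is then verbatim the one above. For the non-singular string number, the invariance identity gives $Q\Per(\lambda\restriction_\Lambda)=\Lambda\cap Q\Per(\lambda)$, so if $S$ is a non-singular string of $\lambda\restriction_\Lambda$ then $x_0\in\Lambda\setminus Q\Per(\lambda\restriction_\Lambda)$, whence $x_0\notin Q\Per(\lambda)$ and $S$ is non-singular for $\lambda$; this gives $ns(\lambda)\geq ns(\lambda\restriction_\Lambda)$. For the null string number, with $\Lambda$ a subgroup and $\lambda\in\End(\Gamma)$, a null string $S$ of $\lambda\restriction_\Lambda$ has $x_0\neq0$ and $(\lambda\restriction_\Lambda)^k(x_0)=0$ for some $k\in\N_+$; by the invariance identity $\lambda^k(x_0)=(\lambda\restriction_\Lambda)^k(x_0)=0$, so $S$ is a null string of $\lambda$, giving $s_0(\lambda)\geq s_0(\lambda\restriction_\Lambda)$.

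There is no genuine obstacle here, only one point that deserves the care above: a priori a string could be singular (resp.\ fail to be null) for $\lambda$ while being non-singular (resp.\ null) for $\lambda\restriction_\Lambda$, and it is exactly the invariance of $\Lambda$ that forbids this by forcing the iterates computed inside $\Lambda$ to agree with those computed in $\Gamma$. Note finally that only the direction ``$\geq$'' is claimed, so I never need to account for strings of $\lambda$ that leave $\Lambda$; this is why invariance alone, without any surjectivity or quotient hypothesis, suffices.
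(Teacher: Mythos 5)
Your proof is correct, and it is exactly the straightforward argument the paper intends: the lemma is stated there with no written proof (marked as immediate), and the intended justification is precisely that a string of $\lambda\restriction_\Lambda$ is, with the same underlying set, a string of $\lambda$ of the same type, since invariance of $\Lambda$ makes the iterates computed in $\Lambda$ agree with those computed in $\Gamma$. Your care in checking that non-singularity and nullness are preserved (via $Q\Per(\lambda\restriction_\Lambda)=\Lambda\cap Q\Per(\lambda)$ and $\lambda^k(x_0)=(\lambda\restriction_\Lambda)^k(x_0)$) is the only point of substance, and you handle it correctly.
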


The following lemma gives an easy criterion in order to verify whether a pseudostring contains a string, namely, it suffices to see that it is infinite.

\begin{lemma}\label{infinite->string}
Let $\Gamma$ be a set, $\lambda:\Gamma\to \Gamma$ a self-map and $S=\{x_n\}_{n\in\N}$ a pseudostring of $\lambda$. If $S$ is infinite, then $S$ contains a string, and is itself a string when $\lambda$ is injective.
\end{lemma}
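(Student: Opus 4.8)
The plan is to reduce everything to one elementary observation about collisions of terms in a pseudostring, and then to split according to whether the first term $x_0$ is a periodic point of $\lambda$. First I would record the collision fact: if $x_i=x_j$ for some $i<j$, then applying $\lambda$ a total of $i$ times and repeatedly using $\lambda(x_n)=x_{n-1}$ forces $x_0=x_{j-i}$; hence $x_0=\lambda^{j-i}(x_{j-i})=\lambda^{j-i}(x_0)$, so $x_0\in\Per(\lambda)$. The contrapositive is the tool I will use throughout: if $x_0\notin\Per(\lambda)$, then the $x_n$ are pairwise distinct, i.e. $S$ is already a string. Note that this step uses only the pseudostring relation, not injectivity, and it applies verbatim to any tail of $S$.

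For the first assertion I would argue by cases on $x_0$. If $x_0\notin\Per(\lambda)$, the tool above shows $S$ itself is a string. If $x_0\in\Per(\lambda)$, I claim that the infinitude of $S$ forces some term $x_m$ to be non-periodic. Indeed, were every $x_n$ periodic, say $\lambda^{q_n}(x_n)=x_n$ with $q_n\geq 1$, then $x_n=\lambda^{q_n-1}(x_{n-1})\in O(x_{n-1})$, so inductively $O(x_n)\subseteq O(x_{n-1})\subseteq\cdots\subseteq O(x_0)$ and thus $\{x_n:n\in\N\}\subseteq O(x_0)$; but $O(x_0)$ is a finite cycle because $x_0$ is periodic, contradicting that $S$ is infinite. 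Fixing $m$ with $x_m\notin\Per(\lambda)$, the tail $S_m=\{x_{m+n}\}_{n\in\N}$ is again a pseudostring whose first term is non-periodic, hence a string by the tool; since its underlying set is contained in that of $S$, it follows that $S$ contains a string.

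For the second assertion I would show that, under injectivity, $x_0\in\Per(\lambda)$ would make $S$ finite, and so cannot occur when $S$ is infinite. If $\lambda^p(x_0)=x_0$ with $p\geq 1$, then $\lambda^p(x_p)=x_0=\lambda^p(x_0)$, and injectivity of $\lambda^p$ gives $x_p=x_0$; a short induction, again using injectivity of $\lambda$, propagates this to $x_{k+p}=x_k$ for all $k$, so the sequence is periodic and $S=\{x_0,\dots,x_{p-1}\}$ is finite. Consequently, when $S$ is infinite and $\lambda$ is injective, necessarily $x_0\notin\Per(\lambda)$, and the tool makes $S$ itself a string.

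The only delicate point is the case $x_0\in\Per(\lambda)$ in the first assertion, where $S$ genuinely need not be a string: the content is precisely that all the infinitude must be concentrated in a non-periodic tail. The crux is the finite-orbit argument establishing ``all terms periodic $\Rightarrow$ finite value set'', and I would be careful to justify both that $x_n\in O(x_{n-1})$ for periodic terms and that these nested orbits collapse into the single finite cycle $O(x_0)$.
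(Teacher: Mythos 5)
Your proof is correct and takes essentially the same route as the paper's: a collision $x_i=x_j$ forces periodicity, infinitude of $S$ then forces some term to be non-periodic, and the tail starting at such a term is a string. You are in fact more explicit than the paper, which merely asserts that a maximal index $m$ with $x_m\in\Per(\lambda)$ exists (your finite-orbit argument is the missing justification) and does not address the injective case at all, which you prove cleanly.
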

\begin{proof}
If $S$ itself is not a string, there exist $n<m$ in $\N$ such that $x_n=x_m$. Therefore $x_m$ is a periodic point of $\lambda$ of order $m-n$. Since $S$ is infinite, there exists a maximal $m\in\N$ such that $x_m$ is a periodic point of $\lambda$. Then $x_{m+1}\not\in O(x_m)$, and so $S'=\{x_n\}_{n>m}$ is a string contained in $S$. 
\end{proof}

\begin{lemma}\label{s=0<->s=0}
Let $\Lambda_1,\Lambda_2$ be sets, $\lambda_i:\Lambda_i\to\Lambda_i$ self-maps for $i=1,2$, $\Gamma=\Lambda_1\times\Lambda_2$ and $\lambda=\lambda_1\times\lambda_2$. Then:
\begin{itemize}
\item[(a)] $s(\lambda)=0$ if and only if $s(\lambda_1)=s(\lambda_2)=0$;
\item[(b)] $ns(\lambda)=0$ if and only if $ns(\lambda_1)=ns(\lambda_2)=0$.
\end{itemize} 
If $\Gamma$ is an abelian group, $\Lambda_1,\Lambda_2$ subgroups of $\Gamma$ and $\lambda\in\End(\Gamma)$, then
\begin{itemize}
\item[(c)] $s_0(\lambda)=0$ if and only if $s_0(\lambda_1)=s_0(\lambda_2)=0$.
\end{itemize}
\end{lemma}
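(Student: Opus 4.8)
The plan is to prove Lemma \ref{s=0<->s=0} by unwinding the product structure and using the disjointness criterion from Lemma \ref{disgiunte}, treating each of the three equivalences in parallel since they share the same skeleton. First I would fix notation: write an element of $\Gamma=\Lambda_1\times\Lambda_2$ as a pair $(u,v)$, so that $\lambda=\lambda_1\times\lambda_2$ acts coordinatewise, and observe that a pseudostring $S=\{(u_n,v_n)\}_{n\in\N}$ of $\lambda$ is exactly a pair of pseudostrings, namely $S_1=\{u_n\}_{n\in\N}$ of $\lambda_1$ and $S_2=\{v_n\}_{n\in\N}$ of $\lambda_2$. The crucial elementary observation is that the elements $(u_n,v_n)$ are pairwise distinct if and only if at least one of the coordinate sequences has pairwise distinct terms, i.e. $S$ is a string of $\lambda$ iff $S_1$ is a string of $\lambda_1$ \emph{or} $S_2$ is a string of $\lambda_2$.

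For the forward direction of each equivalence (the contrapositive is cleanest), I would argue that if one factor, say $\lambda_1$, admits a string $S_1=\{u_n\}$, then I can manufacture a string of $\lambda$ in the product. The natural candidate is to pick any pseudostring $S_2=\{v_n\}$ of $\lambda_2$ — for instance the constant one at a periodic point, or more safely a genuine pseudostring obtained by choosing preimages, which exists whenever $\lambda_2$ restricted to its surjective core is nonempty — and set $S=\{(u_n,v_n)\}$; this is a pseudostring of $\lambda$ whose first coordinate already guarantees pairwise distinctness, hence a string. For (a) this directly shows $s(\lambda)>0$. For (b), I must additionally check that I can choose $S$ \emph{non-singular}: since $S_1$ is non-singular, $u_0\notin Q\Per(\lambda_1)$, and then $(u_0,v_0)\notin Q\Per(\lambda)$ because quasi-periodicity of the pair forces quasi-periodicity of each coordinate. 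For (c) the analogous point is that if $S_1$ is a null string, so $u_0\neq 0$ and $\lambda_1^k(u_0)=0$, I take $S_2$ to be the zero pseudostring $\{0\}$ in $\Lambda_2$; then $(u_0,0)\neq 0$ and $\lambda^k((u_0,0))=0$, so $S$ is a null string of $\lambda$.

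For the reverse direction, suppose $s(\lambda_1)=s(\lambda_2)=0$ and let $S=\{(u_n,v_n)\}$ be any string of $\lambda$. By the distinctness observation, one of the coordinate pseudostrings, say $S_1$, has pairwise distinct terms, hence is a string of $\lambda_1$, contradicting $s(\lambda_1)=0$. This gives (a). For (b), I would check that if $S$ is non-singular then the coordinate string witnessing distinctness is also non-singular: if $(u_0,v_0)\notin Q\Per(\lambda)$, I claim the distinct coordinate must fail quasi-periodicity — indeed if both $u_0\in Q\Per(\lambda_1)$ and $v_0\in Q\Per(\lambda_2)$ then $(u_0,v_0)\in Q\Per(\lambda)$, a contradiction, so at least one coordinate is non-quasi-periodic, and that same coordinate automatically has distinct terms and yields a non-singular string. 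For (c), if $S$ is a null string then $\lambda^k((u_0,v_0))=0$ forces $\lambda_1^k(u_0)=0$ and $\lambda_2^k(v_0)=0$, while $(u_0,v_0)\neq 0$ means some coordinate, say $u_0$, is nonzero, giving a null string $S_1$ of $\lambda_1$.

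The main obstacle I anticipate is the subtle interaction in part (c) between the null condition and the distinctness condition: a coordinate may be nonzero (supplying the null requirement) yet have repeated terms, while the other coordinate supplies distinctness but is identically zero. I would resolve this by noting that the nonzero coordinate is \emph{automatically} a string: since $\lambda_1^k(u_0)=0$ and $u_0\neq 0$, the terms $u_0,u_1,\dots$ cannot repeat within the relevant initial segment without contradicting minimality of $k$, and one invokes Lemma \ref{infinite->string} or a direct argument that a nonzero null pseudostring in an abelian group is genuinely a string because $\lambda_1^j(u_0)=u_{0}$ for $j>0$ would force $u_0\in\ker_\infty$ to be periodic and hence zero. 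Thus the nonzero coordinate itself furnishes the required null string, and the equivalences close uniformly.
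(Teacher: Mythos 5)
Your proposal rests on the ``crucial elementary observation'' that a string of $\lambda=\lambda_1\times\lambda_2$ must have at least one coordinate sequence with pairwise distinct terms, and this observation is false. Counterexample: let $\lambda_1$ be the self-map of $\Lambda_1=\{a_0,a_1\}\cup\{b_n:n\geq 3\}$ defined by $a_0\mapsto a_1\mapsto a_0$, $b_3\mapsto a_0$ and $b_{n+1}\mapsto b_n$, and let $\lambda_2$ be the self-map of $\Lambda_2=\{c_0,c_1,c_2\}\cup\{d_n:n\geq 4\}$ defined by $c_0\mapsto c_2\mapsto c_1\mapsto c_0$, $d_4\mapsto c_0$ and $d_{n+1}\mapsto d_n$. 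Then $u=(a_0,a_1,a_0,b_3,b_4,\dots)$ and $v=(c_0,c_1,c_2,c_0,d_4,d_5,\dots)$ are pseudostrings of $\lambda_1$ and $\lambda_2$ respectively, neither of which is a string (since $u_0=u_2$ and $v_0=v_3$), yet $S=\{(u_n,v_n)\}_{n\in\N}$ \emph{is} a string of $\lambda$: the only coincidence among the first coordinates occurs at the pair of indices $\{0,2\}$ and the only one among the second coordinates at $\{0,3\}$, so no two pairs are equal. This is precisely the step on which your reverse direction of (a) stands (``one of the coordinate pseudostrings has pairwise distinct terms, hence is a string of $\lambda_1$''), so that argument collapses. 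The paper instead proves the correct weaker statement: if neither $S_1$ nor $S_2$ \emph{contains} a string, then both are finite by Lemma \ref{infinite->string}, so $S\subseteq S_1\times S_2$ would be finite, contradicting the fact that a string is infinite; hence some coordinate pseudostring contains (but need not itself be) a string, and $s(\lambda_i)>0$ follows. You do cite Lemma \ref{infinite->string}, but only in your discussion of (c); you never apply this finiteness argument in (a), where it is indispensable.

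Your parts (b) and (c), by contrast, are essentially correct, because there you never really use the false observation: in (b) you show that some coordinate has a non-quasi-periodic first term (otherwise the pair $(u_0,v_0)$ would be quasi-periodic --- this is in substance the computation with the product of the two periods in the paper's proof of (b)), and a pseudostring whose first term is not quasi-periodic automatically has pairwise distinct terms, since a repetition $u_n=u_m$ with $n<m$ pushes down to $u_0=u_{m-n}$ and makes $u_0$ periodic; in (c) the coordinate with nonzero first term is automatically a string for the analogous reason (a repetition would make its first term periodic and lying in the hyperkernel, hence zero). These match the paper's arguments, and your direct treatment of (c) is if anything cleaner than the paper's detour through ``contains a string''. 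One last caveat: your forward direction for (a) and (b) pairs a string of one factor with a pseudostring of the other, which requires such a pseudostring to exist; for self-maps of sets this can fail (e.g.\ $n\mapsto n+1$ on $\N$ admits no pseudostring at all), whereas in the group case --- the case in which the paper actually applies this lemma --- the zero pseudostring, or equivalently the paper's appeal to Lemma \ref{subgroups} for the invariant subgroup $\Lambda_1\times\{0\}$, settles it.
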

\begin{proof}
(a) If $s(\lambda)=0$, then $s(\lambda_1)=s(\lambda_2)=0$ by Lemma \ref{subgroups}.

Assume that $s(\lambda)>0$. Let $S=\{x_n\}_{n\in\N}$ be a string of $\lambda$. For every $n\in\N$ let $x_n=(x_n^1,x_n^2)$, where $x_n^1\in\Lambda_1$ and $x_n^2\in\Lambda_2$. Then $S_1:=\{x_n^1\}_{n\in\N}$ and $S_2:=\{x_n^2\}_{n\in\N}$ are pseudostrings of $\lambda_1$ and $\lambda_2$ respectively. Assume that $S_1$ and $S_2$ contains no string. Then $S_1$ and $S_2$ are finite by Lemma \ref{infinite->string}. It follows that $S\subseteq S_1\times S_2$ is finite, and hence $S$ cannot be a string, which is a contradiction. Consequently either $S_1$ or $S_2$ contains a string, that is either $s(\lambda_1)>0$ or $s(\lambda_2)>0$.

\smallskip
(b) If $ns(\lambda)=0$, then $ns(\lambda_1)=ns(\lambda_2)=0$ by Lemma \ref{subgroups}.

Assume that $ns(\lambda)>0$. Let $S=\{x_n\}_{n\in\N}$ be a non-singular string of $\lambda$. For every $n\in\N$ let $x_n=(x_n^1,x_n^2)$, where $x_n^1\in\Lambda_1$ and $x_n^2\in\Lambda_2$. Then $S_1:=\{x_n^1\}_{n\in\N}$ and $S_2:=\{x_n^2\}_{n\in\N}$ are pseudostrings of $\lambda_1$ and $\lambda_2$ respectively. We show that either $S_1$ or $S_2$ is a non-singular string. To this aim, let $x_{-n}^1=\lambda^n(x_0^1)$ and $x_{-n}^2=\lambda^n(x_0^2)$ for every $n\in\N$ and $x_n=(x_n^1,x_n^2)$ for every $n\in\Z$; moreover, assume by contradiction that $x_a^1=x_b^1$ and $x_c^2=x_d^2$ for some non-positive integers $a<b$ and $c<d$. Assume without loss of generality that $a\leq c$ and let $m=(b-a)(d-c)$, which is positive; then $x_a=x_{a-m}$, against the hypothesis that $S$ is a non-singular string. This proves that either $S_1$ or $S_2$ is a non-singular string, that is, either $ns(\lambda_1)>0$ or $ns(\lambda_2)>0$.

\smallskip
(c) If $s_0(\lambda)=0$, then $s_0(\lambda_1)=s_0(\lambda_2)=0$ by Lemma \ref{subgroups}.

Assume that $s_0(\lambda)>0$. Let $S=\{x_n\}_{n\in\N}$ be a null string of $\lambda$. For every $n\in\N$ let $x_n=(x_n^1,x_n^2)$, where $x_n^1\in\Lambda_1$ and $x_n^2\in\Lambda_2$. Then $S_1:=\{x_n^1\}_{n\in\N}$ and $S_2:=\{x_n^2\}_{n\in\N}$ are pseudostrings of $\lambda_1$ and $\lambda_2$ respectively. By the argument in the proof of (a) either $S_1$ or $S_2$ contains a string. Suppose that $S_1$ contains a string. Since $\lambda^m(x_0^1)=0$ for some $m\in\N_+$, $S_1$ is a null string and so $s_0(\lambda_1)>0$. Analogously, if $S_2$ contains a string, then $s_0(\lambda_2)>0$.
\end{proof}



The next result shows that for an injective self-map every string is non-singular, so its string number coincides with its non-singular string number.

\begin{proposition}\label{uguaglianza iniettiva}
Let $\Gamma$ be a set and $\lambda:\Gamma\to\Gamma$ a self-map. If $\lambda$ is injective and $S$ is a string of $\lambda$, then $S$ is non-singular. In particular, $s(\lambda)=ns(\lambda)$.\hfill $\qed$
\end{proposition}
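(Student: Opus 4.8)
The plan is to argue by contradiction: I assume that some string $S=\{x_n\}_{n\in\N}$ of an injective self-map $\lambda$ is singular, and I derive a violation of the pairwise distinctness of its terms. The whole argument rests on the observation that injectivity promotes quasi-periodicity to genuine periodicity.

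First I would unwind the definition of singularity. If $S$ is singular, then $x_0\in Q\Per(\lambda)$, so there exist $n<m$ in $\N$ with $\lambda^n(x_0)=\lambda^m(x_0)$. Writing $\lambda^m=\lambda^n\circ\lambda^{m-n}$ and using that $\lambda$, and hence $\lambda^n$, is injective, I can cancel $\lambda^n$ on the left to obtain $x_0=\lambda^{k}(x_0)$ with $k:=m-n\in\N_+$. This is the conceptual heart of the proof: for injective $\lambda$, the quasi-periodic point $x_0$ is in fact a genuine periodic point.

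Next I would exploit the defining relation $\lambda(x_j)=x_{j-1}$ of a string, which, iterated $k$ times, gives $\lambda^k(x_k)=x_0$. Combining this with $\lambda^k(x_0)=x_0$ from the previous step yields $\lambda^k(x_k)=\lambda^k(x_0)$, and injectivity of $\lambda^k$ forces $x_k=x_0$. Since $k>0$, this contradicts the requirement that the terms of a string be pairwise distinct. Hence no string of an injective $\lambda$ can be singular.

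Finally, the equality $s(\lambda)=ns(\lambda)$ follows formally. Every non-singular string is in particular a string, so a family of pairwise disjoint non-singular strings is also a family of pairwise disjoint strings; this gives $ns(\lambda)\leq s(\lambda)$ in general. Conversely, the first part shows that for injective $\lambda$ every string is non-singular, so the families of pairwise disjoint strings and of pairwise disjoint non-singular strings coincide, and the two suprema defining $s(\lambda)$ and $ns(\lambda)$ agree. I expect the only genuinely non-routine step to be the passage from $\lambda^n(x_0)=\lambda^m(x_0)$ to $\lambda^{m-n}(x_0)=x_0$ via injectivity; once $x_0$ is known to be periodic, the contradiction with distinctness of the terms is immediate.
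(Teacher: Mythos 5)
Your proof is correct: the cancellation of $\lambda^n$ to upgrade quasi-periodicity of $x_0$ to genuine periodicity, followed by $\lambda^k(x_k)=x_0=\lambda^k(x_0)$ forcing $x_k=x_0$ against pairwise distinctness, is exactly the argument this proposition calls for. The paper states this result without proof (it is marked as immediate), and your write-up supplies precisely the intended reasoning, including the routine deduction of $s(\lambda)=ns(\lambda)$ from the coincidence of the two families of strings.
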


\subsection{Periodicity and string numbers}

In the next result we see that a quasi-periodic self-map has no strings.

\begin{lemma}\label{q-p}
Let $\Gamma$ be a set and $\lambda:\Gamma\to \Gamma$ a self-map. If $\lambda$ is quasi-periodic, then $s(\lambda)=0$. \hfill$\qed$
\end{lemma}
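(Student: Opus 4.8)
The plan is to argue by contradiction: I assume $s(\lambda)>0$ and produce a pair of equal terms in a string, which is forbidden by pairwise distinctness. So suppose there exists a string $S=\{x_k\}_{k\in\N}$ of $\lambda$. By definition of string (via pseudostring), $\lambda(x_k)=x_{k-1}$ for every $k\in\N_+$, and the terms $x_k$ are pairwise distinct. The key preliminary observation is that iterating this defining relation yields
$$\lambda^j(x_k)=x_{k-j}\qquad\text{whenever } 0\leq j\leq k,$$
which I would verify by an immediate induction on $j$ (the base case $j=0$ is trivial, and $\lambda^{j+1}(x_k)=\lambda(\lambda^j(x_k))=\lambda(x_{k-j})=x_{k-j-1}$ provided $j+1\leq k$).

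Next I would invoke the hypothesis that $\lambda$ is quasi-periodic: there exist $n<m$ in $\N$ with $\lambda^n=\lambda^m$. I apply both sides of this identity to the term $x_m$, which is legitimate since the string is indexed by all of $\N$ and the exponents $n,m$ do not exceed $m$. Using the iterated relation above with $k=m$, I get $\lambda^n(x_m)=x_{m-n}$ and $\lambda^m(x_m)=x_0$. Since $\lambda^n=\lambda^m$, these two values coincide, so $x_{m-n}=x_0$. But $n<m$ forces $m-n\geq 1$, so $m-n$ and $0$ are distinct indices, contradicting the pairwise distinctness of the terms of $S$. Hence no string of $\lambda$ exists, i.e. $s(\lambda)=0$.

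The argument is essentially index bookkeeping, so there is no substantial obstacle; the only point requiring a little care is to apply the identity $\lambda^n=\lambda^m$ to a term whose index is large enough (here $x_m$) that both $\lambda^n$ and $\lambda^m$ can be evaluated by the collapsing formula $\lambda^j(x_k)=x_{k-j}$. I would note in passing that the same computation applied to any $x_k$ with $k\geq m$ gives $x_{k-n}=x_{k-m}$, again two distinct terms coinciding, so the choice of $x_m$ is merely the most economical one.
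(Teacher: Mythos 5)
Your proof is correct: the collapsing identity $\lambda^j(x_k)=x_{k-j}$ for $j\leq k$, applied with the quasi-periodicity relation $\lambda^n=\lambda^m$ at the term $x_m$, forces $x_{m-n}=x_0$ and contradicts the pairwise distinctness required of a string. The paper states this lemma without proof (marking it as immediate), and your argument is exactly the routine index bookkeeping the authors evidently had in mind, so there is nothing to compare beyond noting that you have filled in the omitted details correctly.
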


Since a periodic map is quasi-periodic, it follows that a periodic map has no strings. So first examples of group endomorphisms of string number zero are the endomorphism $0_G$ and the identity $id_G$ of an abelian group $G$. Note that in Lemma \ref{q-p} the quasi-periodicity of $\lambda$ cannot be replaced by local quasi-periodicity (see Remark \ref{lqp-strings}).


In item (a) of the following result we show that the string number of a self-map $\lambda$ is zero in case $\lambda$ is locally periodic; then also the non-singular string number is zero. But in item (b) we see that the non-singular string number of $\lambda$ is zero also under the weaker hypothesis that $\lambda$ is locally quasi-periodic.

\begin{proposition}\label{Per>NoStrings}
Let $\Gamma$ be a set and $\lambda:\Gamma\to\Gamma$ a self-map.
\begin{itemize}
\item[(a)] If $\lambda$ is locally periodic, then $s(\lambda)=0$.
\item[(b)] If $\lambda$ is locally quasi-periodic, then $ns(\lambda)=0$.
\end{itemize}
\end{proposition}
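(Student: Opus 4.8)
The plan is to handle the two items separately, since (b) is essentially a matter of unwinding the definitions while (a) requires a genuine, if short, argument.

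For (b), local quasi-periodicity says exactly that $Q\Per(\lambda)=\Gamma$. By definition, a string $S=\{x_n\}_{n\in\N}$ is non-singular precisely when $x_0\notin Q\Per(\lambda)$. Since here every element of $\Gamma$ lies in $Q\Per(\lambda)$, no string can be non-singular; the family of non-singular strings is empty, whence $ns(\lambda)=0$. I expect this to be immediate and to need no auxiliary lemma.

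For (a), I would argue by contradiction, assuming a string $S=\{x_n\}_{n\in\N}$ exists and deriving a coincidence $x_k=x_0$ that violates the pairwise distinctness demanded of a string. The first step is to observe that a locally periodic self-map is injective: if $\lambda(a)=\lambda(b)$, pick $N\in\N_+$ divisible by the periods of both $a$ and $b$ (which exist by local periodicity), so that $\lambda^N(a)=a$ and $\lambda^N(b)=b$; then $a=\lambda^{N-1}(\lambda(a))=\lambda^{N-1}(\lambda(b))=b$. The second step uses this injectivity: let $k\in\N_+$ be a period of the first term $x_0$, so $\lambda^k(x_0)=x_0$. From the string relations $\lambda(x_n)=x_{n-1}$ one computes $\lambda^k(x_k)=x_0$ as well, hence $\lambda^k(x_k)=\lambda^k(x_0)$; injectivity of $\lambda$, and therefore of $\lambda^k$, forces $x_k=x_0$, contradicting that the terms of a string are pairwise distinct. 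Therefore no string exists and $s(\lambda)=0$.

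The main subtlety to flag is that local periodicity is strictly weaker than quasi-periodicity, since the periods of different points may be unbounded; consequently one cannot simply invoke Lemma \ref{q-p}. The injectivity observation is precisely what lets the pointwise periodicity of $x_0$ be leveraged into the global coincidence $x_k=x_0$. Should injectivity feel like overkill, an alternative route is to show directly by induction that every $x_n$ lies in the finite forward orbit $O(x_0)$, using that each $x_n$ is itself periodic, so that $x_n=\lambda^{m_n-1}(x_{n-1})$ for a period $m_n$ of $x_n$; this again contradicts having infinitely many distinct terms.
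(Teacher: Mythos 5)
Your proposal is correct and takes essentially the same approach as the paper: part (b) is dispatched definitionally in both, and for part (a) your key observation --- that local periodicity forces injectivity, proved by taking a common multiple of the periods of the two points --- is exactly the paper's first step. The only cosmetic difference is the finish: you derive the contradiction $x_k=x_0$ directly from injectivity of $\lambda^k$, while the paper uses the same injectivity to conclude that every pseudostring is trapped in a single finite orbit and hence cannot be a string.
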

\begin{proof}
(a) For every $x\in\Gamma$ let $m_x\in\N_+$ be the minimum natural number such that $\lambda^{m_x}(x)=x$. 
If $\lambda(x)=\lambda(y)$ for some $x,y\in\Gamma$, then $x=\lambda^{m_xm_y}(x)=\lambda^{m_xm_y}(y)=y$ and this proves that $\lambda$ is injective. Moreover, $\lambda(\lambda^{m_x-1}(x))=x$, that shows that $\lambda$ is surjective.
Then $O(x)=\{x,\lambda(x),\ldots,\lambda^{m_x-1}(x)\}$ and $\lambda^{-n}(x)\in O(x)$ for every $n\in\N$.
Since $\lambda$ is injective, each pseudostring of $\lambda$ has to be contained in a single $O(x)$ for some $x\in\Gamma$. We have seen that $O(x)$ is finite, hence all the pseudostrings are finite and they cannot be strings.

\smallskip
(b) is clear. 
\end{proof}

\begin{remark}\label{lqp-strings}
If $\f$ is a locally quasi-periodic group endomorphism, it may happen that $\f$ has strings  (see Example \ref{utilissimo}). These strings are necessarily singular, since $\f$ cannot admit any non-singular string in view of Proposition \ref{Per>NoStrings}(b). 
\end{remark}

The next corollary of Proposition \ref{Per>NoStrings} gives a sufficient condition for $\mu_p$ to have no strings.

\begin{corollary}\label{dptp}
If $G$ is a torsion abelian group and $d_p(t_p(G))=0$ for some prime $p$, then $s(\mu_p)=0$.
\end{corollary}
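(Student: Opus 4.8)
The plan is to reduce to the surjective core and show that $\mu_p$ is locally periodic there, so that Proposition \ref{Per>NoStrings}(a) applies. By Example \ref{sc-ex}(a) the surjective core of $\mu_p$ is $\sc(\mu_p)=d_p(G)$, the maximum $p$-divisible subgroup of $G$, and by Proposition \ref{s-c} it suffices to prove $s(\mu_p^\sc)=0$. So the real content is to identify $d_p(G)$ and to check local periodicity of $\mu_p$ on it.

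First I would compute $d_p(G)$ using the primary decomposition $G=\bigoplus_q t_q(G)$ of the torsion group $G$. For a prime $q\neq p$, multiplication by $p$ is bijective on each cyclic subgroup of $t_q(G)$ (since $p$ is invertible modulo any power of $q$), so $t_q(G)$ is $p$-divisible; on the other hand $d_p(t_p(G))=0$ by hypothesis. Since the maximum $p$-divisible subgroup of a torsion group splits along the primary components (a $p$-divisible subgroup $H$ decomposes as $H=\bigoplus_q(H\cap t_q(G))$, and its $p$-component lies in $d_p(t_p(G))=0$), I obtain $d_p(G)=\bigoplus_{q\neq p}t_q(G)$, the subgroup $G_{p'}$ of elements of order prime to $p$.

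Next I would verify that $\mu_p^\sc=\mu_p\restriction_{G_{p'}}$ is locally periodic. Any $x\in G_{p'}$ has finite order $m$ coprime to $p$; since $p$ is a unit modulo $m$, it has finite multiplicative order $d$ there, whence $\mu_p^d(x)=p^d x=x$. Thus every point of $G_{p'}$ is periodic, so Proposition \ref{Per>NoStrings}(a) yields $s(\mu_p^\sc)=0$, and then Proposition \ref{s-c} gives $s(\mu_p)=s(\mu_p^\sc)=0$.

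The only delicate step is the computation $d_p(G)=G_{p'}$, i.e.\ the interplay between $p$-divisibility and the primary decomposition; everything else is routine. Alternatively, one can bypass the explicit surjective core of $\mu_p$ on all of $G$ by splitting $G=t_p(G)\oplus G_{p'}$ and applying Lemma \ref{s=0<->s=0}(a): on $G_{p'}$ local periodicity gives $s=0$ exactly as above, while on the $p$-group $t_p(G)$ the hypothesis $d_p(t_p(G))=d(t_p(G))=0$ says that $t_p(G)$ is reduced, so $s(\mu_p\restriction_{t_p(G)})=0$ by Example \ref{sc-ex}(b); hence $s(\mu_p)=0$.
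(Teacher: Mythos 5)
Your proof is correct and follows essentially the same route as the paper: reduce via Proposition \ref{s-c} and Example \ref{sc-ex}(a) to the surjective core $d_p(G)$, identify it as $\bigoplus_{q\neq p}t_q(G)$ using the hypothesis, and conclude by local periodicity and Proposition \ref{Per>NoStrings}(a); your verification of local periodicity via the multiplicative order of $p$ modulo the order of $x$ is just a more direct phrasing of the paper's argument that $\mu_p$ restricts to an automorphism of the finite $\mu_p$-invariant subgroup $\langle x\rangle$. Your alternative sketch via $G=t_p(G)\oplus G_{p'}$, Lemma \ref{s=0<->s=0}(a) and Example \ref{sc-ex}(b) is also valid, though it is not the path the paper takes.
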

\begin{proof}
Since $G$ is torsion, $G=\bigoplus_p t_p(G)$. By Proposition \ref{s-c} and Example \ref{sc-ex}(a), $s(\mu_p)=s(\mu_p\restriction_{d_p(G)})$. Since $d_p(t_p(G))=0$ by hypothesis, $$d_p(G)=d_p(t_p(G))\oplus\bigoplus_{q\neq p}t_q(G)=\bigoplus_{q\neq p}t_q(G).$$ Therefore, it suffices to prove that $\mu_p$ is locally periodic in order to apply Proposition \ref{Per>NoStrings}(a) to $\mu_p\restriction_{d_p(G)}$.
Let $x\in d_p(G)$, and consider the finite subgroup $F=\langle x\rangle$ of $G$ of size $m\in\N_+$. Since $F$ is $\mu_p$-invariant and it is finite, $\mu_p\restriction_F:F\to F$ is quasi-periodic. Since $m$ is coprime with $p$, $\mu_p\restriction_F:F\to F$ is an automorphism, and so it is locally periodic. Then $\mu_p:d_p(G)\to d_p(G)$ is locally periodic. Hence apply Propositions \ref{s-c} and \ref{Per>NoStrings}(a) to conclude that $s(\mu_p)=s(\mu_p\restriction_{d_p(G)})=0$.
\end{proof}

Example \ref{sc-ex}(a) shows that $d_p(G)=0$ implies $s(\mu_p)=0$ for an abelian group $G$ and a prime $p$. This implication cannot be reverted; consider for example, for a prime $q$ different from $p$, the group $G=\Z(q)^{(\N)}$ (or $G=\Z(q^\infty)$), which is a torsion abelian group with $d_p(G)=G$, while $d_p(t_p(G))=0$ implies $s(\mu_p)=0$ by Corollary \ref{dptp}.

\medskip
In the following theorem item (a) is the equivalence between (i) and (iii) in Theorem A, and item (b) is the equivalence between (i) and (iii) in Theorem A$^*$.

\begin{theorem}\label{non-per->string}\label{s=0<->G=Per}
Let $G$ be an abelian group and $\f\in\End(G)$ surjective. Then every element $x$ of $G$ is contained as the first term in a pseudostring of $\f$. Furthermore, if $x$ is not periodic, then every pseudostring $S=\{x_n\}_{n\in\N}$ with $x_0=x$ is a string. Consequently:
\begin{itemize}
\item[(a)] $s(\f)=0$ if and only if $\sc(\f)=\Per(\f)$;
\item[(b)] $ns(\f)=0$ if and only if $\sc(\f)\subseteq Q\Per(\f)$.
\end{itemize}
\end{theorem}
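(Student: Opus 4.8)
The plan is to prove the two unnumbered assertions first, since they carry all the content, and then to read off (a) and (b) as short formal consequences. For the first assertion, given $x\in G$ I would set $x_0=x$ and build the pseudostring by pulling back repeatedly along $\f$: since $\f$ is surjective there is $x_1$ with $\f(x_1)=x_0$, and inductively an $x_{n+1}$ with $\f(x_{n+1})=x_n$, a standard appeal to dependent choice. The resulting sequence $\{x_n\}_{n\in\N}$ satisfies $\f(x_n)=x_{n-1}$ for every $n\in\N_+$, hence is a pseudostring of $\f$ with first term $x$.

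For the second assertion I would first record the identity $\f^j(x_k)=x_{k-j}$ for $0\le j\le k$, obtained by an immediate induction from $\f(x_k)=x_{k-1}$; in particular $\f^k(x_k)=x_0$. Now suppose that a pseudostring $S=\{x_n\}_{n\in\N}$ with $x_0=x$ fails to be a string, so $x_n=x_m$ for some $n<m$. Applying $\f^n$ gives $x_0=\f^n(x_n)=\f^n(x_m)=x_{m-n}$, and applying $\f^{m-n}$ to the equality $x_0=x_{m-n}$ yields $\f^{m-n}(x_0)=\f^{m-n}(x_{m-n})=x_0$ with $m-n\ge 1$, so $x_0=x$ is periodic. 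By contraposition, if $x$ is not periodic then every pseudostring with first term $x$ is a string. This short computation is the heart of the proof and essentially the only place where real work happens.

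For the consequences, I would note that surjectivity of $\f$ makes the decreasing chain defining the surjective core stabilize immediately, so $\sc(\f)=G$; thus (a) reads $s(\f)=0\iff G=\Per(\f)$ and (b) reads $ns(\f)=0\iff G=Q\Per(\f)$. In (a) the implication $G=\Per(\f)\Rightarrow s(\f)=0$ is exactly Proposition \ref{Per>NoStrings}(a), since $G=\Per(\f)$ means $\f$ is locally periodic; for the converse I argue contrapositively, using that if some $x\in G$ is not periodic the two assertions produce a string with first term $x$, whence $s(\f)>0$. Part (b) is parallel: $G=Q\Per(\f)\Rightarrow ns(\f)=0$ is Proposition \ref{Per>NoStrings}(b), and conversely a non-quasi-periodic $x$ is in particular non-periodic (periodicity implies quasi-periodicity), so the two assertions give a string $S$ with $x_0=x$; since $x_0\notin Q\Per(\f)$, this string is non-singular by the definition of singular string, forcing $ns(\f)>0$. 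I do not anticipate any serious obstacle beyond bookkeeping: once the distinctness computation of the second assertion is in hand, (a) and (b) are formal combinations of it with Proposition \ref{Per>NoStrings}.
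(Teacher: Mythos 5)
Your proposal is correct, and its core is exactly the paper's argument: a pseudostring over each $x\in G$ is built by successive $\f$-preimages (surjectivity plus dependent choice), and a coincidence $x_n=x_m$ with $n<m$ is pushed down by $\f^n$ and then $\f^{m-n}$ to force $\f^{m-n}(x_0)=x_0$, so a non-periodic first term yields a string. The one point where you genuinely diverge is the derivation of (a) and (b): you use surjectivity to collapse the surjective core, $\sc(\f)=G$, and then combine the two assertions with Proposition \ref{Per>NoStrings}; the paper never makes that reduction, but instead routes both items through Proposition \ref{s-c}, writing $s(\f)=s(\f^{\sc})$ and $ns(\f)=ns(\f^{\sc})$ and applying the first part of the theorem to the restriction $\f^{\sc}$, which is surjective on the subgroup $\sc(\f)$ by construction. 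The difference is one of scope rather than correctness: your reading proves (a) and (b) exactly under the stated surjectivity hypothesis (where they say $s(\f)=0\iff G=\Per(\f)$ and $ns(\f)=0\iff G=Q\Per(\f)$), while the paper's formulation is arranged so that (a) and (b) remain valid for an \emph{arbitrary} $\f\in\End(G)$ --- and that is the form the paper actually uses afterwards, since the proofs of Theorems A and A$^*$ cite these items for the equivalences (i)$\Leftrightarrow$(iii) with no surjectivity assumed. Your argument recovers that generality with one extra line: by Proposition \ref{s-c} one has $s(\f)=s(\f^{\sc})$ and $ns(\f)=ns(\f^{\sc})$, and since each periodic orbit is a finite $\f$-invariant set on which $\f$ restricts to a bijection, $\Per(\f)\subseteq\sc(\f)$; applying your surjective-case reasoning to $\f^{\sc}$ then gives (a) and (b) as stated for general $\f$. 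All the individual steps you give (the identity $\f^j(x_k)=x_{k-j}$, periodicity implying quasi-periodicity, and non-singularity of a string whose first term lies outside $Q\Per(\f)$) are sound.
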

\begin{proof}
Since $\f$ is surjective, there exists a pseudostring $S=\{x_n\}_{n\in\N}$ with $x_0=x$ . Assume that $x_n=x_m$ for some $n<m$ in $\N$. Then $x_{m-n}=x$. This means that $x$ is a periodic point of $\f$.

\smallskip
(a) By Proposition \ref{s-c} $s(\f)=s(\f^\sc)$.
Assume that $\sc(\f)\supsetneq \Per(\f)$. Then there exists a non-periodic $x\in\sc(\f)$. By the first part of the theorem there exists a string of $\f^\sc$ with $x$ as first term. In particular $s(\f^\sc)>0$.
Suppose now that $\sc(\f)=\Per(\f)$. Then $\f^\sc$ is locally periodic and so $s(\f^\sc)=0$ by Proposition \ref{Per>NoStrings}(a).

\smallskip
(b) By Proposition \ref{s-c} $ns(\f)=ns(\f^\sc)$.
Assume that $\sc(\f)\not\subseteq Q\Per(\f)$. Then there exists $x\in\sc(\f)$ non-quasi-periodic; in particular $x$ is non-periodic. By the first part of the theorem there exists a string $S$ of $\f^\sc$ with $x$ as first element. Since $x$ is non-quasi-periodic, $S$ is non-singular, and hence $ns(\f^\sc)>0$.
Suppose now that $\sc(\f)\subseteq \Per(\f)$. Then $\f^\sc$ is locally quasi-periodic and so $ns(\f^\sc)=0$ by Proposition \ref{Per>NoStrings}(b).
\end{proof}



Going in the opposite direction with respect to Corollary \ref{dptp}, the following consequence of Theorem \ref{s=0<->G=Per} produces strings of $\mu_p$.

\begin{corollary}\label{dp-notin-t}
Let $G$ be an abelian group and $p$ a prime. If $d_p(G)\not\subseteq t(G)$, then $s(\mu_p)>0$.
\end{corollary}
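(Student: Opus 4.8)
The plan is to pass to the surjective core of $\mu_p$, where Theorem \ref{s=0<->G=Per}(a) applies directly, and then to locate a non-periodic point there using the hypothesis. By Example \ref{sc-ex}(a) the surjective core of $\mu_p$ is $\sc(\mu_p)=d_p(G)$, so $\mu_p^{\sc}=\mu_p\restriction_{d_p(G)}$ is a surjective endomorphism of $d_p(G)$, and Proposition \ref{s-c} reduces the claim to proving $s(\mu_p^{\sc})>0$.

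The only computation needed is a description of the periodic points of $\mu_p$. An element $x$ satisfies $\mu_p^n(x)=x$ exactly when $p^nx=x$, that is $(p^n-1)x=0$; since $p^n-1\neq 0$ for every $n\in\N_+$, this forces $x$ to be torsion. Hence $\Per(\mu_p)\subseteq t(G)$, and in particular $\Per(\mu_p^{\sc})\subseteq d_p(G)\cap t(G)$.

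Next I invoke the hypothesis $d_p(G)\not\subseteq t(G)$ to choose some $x\in d_p(G)\setminus t(G)$. By the previous step $x$ is not a periodic point of $\mu_p^{\sc}$, so $\sc(\mu_p^{\sc})=d_p(G)\neq\Per(\mu_p^{\sc})$. Theorem \ref{s=0<->G=Per}(a) then gives $s(\mu_p^{\sc})>0$, whence $s(\mu_p)=s(\mu_p^{\sc})>0$.

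I do not anticipate any real obstacle: this is the exact dual of Corollary \ref{dptp}, with the elementary inclusion $\Per(\mu_p)\subseteq t(G)$ playing the role that local periodicity played there. Should one prefer to sidestep Theorem \ref{s=0<->G=Per}, the same $x$ produces an explicit string: since $d_p(G)$ is $p$-divisible one may choose inductively $x_{n+1}\in d_p(G)$ with $px_{n+1}=x_n$ starting from $x_0=x$, yielding a pseudostring of $\mu_p$ inside $d_p(G)$; as $x_0$ has infinite order it is non-periodic, and the first part of Theorem \ref{s=0<->G=Per} ensures that this pseudostring is in fact a string.
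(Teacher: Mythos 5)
Your proposal is correct and follows essentially the same route as the paper's proof: reduce to the surjective restriction $\mu_p\restriction_{d_p(G)}$ via Example \ref{sc-ex}(a) and Proposition \ref{s-c}, observe that $\Per(\mu_p)\subseteq t(G)$ (since $\mu_p^n(x)=x$ forces $(p^n-1)x=0$), pick $x\in d_p(G)\setminus t(G)$, and conclude by Theorem \ref{s=0<->G=Per}(a). Your closing remark, constructing an explicit string by choosing $p$-divisible preimages of a non-torsion $x_0$, is a valid self-contained variant of the same idea, but the main argument coincides with the paper's.
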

\begin{proof}
The restriction $\mu_p\restriction_{d_p(G)}:d_p(G)\to d_p(G)$ is surjective by Example \ref{sc-ex}(a). Since $d_p(G)\not\subseteq t(G)$, there exists $x\in d_p(G)$ with $x\not\in t(G)$. Since $\Per(\mu_p)\subseteq t(G)$, Theorem \ref{s=0<->G=Per}(a) yields $s(\mu_p)>0$.
\end{proof}

\section{From one to infinitely many strings}

\subsection{Garlands and fans}

Our aim in this section is, starting from a string of a group endomorphisms $\f$, to construct infinitely many strings of $\f$. We propose two constructions.

\medskip
We leave without proof the following easy lemma.

\begin{lemma}
Let $G$ be an abelian group, $\f\in\End(G)$ and let $S=\{x_n\}_{n\in\N}$ be a pseudostring of $\f$ in $G$. For every $k\in\N_+$,
$$S_k:=\{x_n+x_{n+k}\}_{n\in\N}\ \text{and}\ S_k^*:=\{x_n+\ldots+x_{n+k}\}_{n\in\N}$$ are pseudostrings of $\f$. Furthermore, for every $a\in\N$ $$aS:=\{a x_n\}_{n\in\N}$$ is a pseudostring of $\f$. \hfill $\qed$
\end{lemma}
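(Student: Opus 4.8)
The statement asserts that for a pseudostring $S = \{x_n\}_{n\in\N}$ of $\f$, the sequences $S_k = \{x_n + x_{n+k}\}_{n\in\N}$, $S_k^* = \{x_n + \ldots + x_{n+k}\}_{n\in\N}$ (for $k \in \N_+$), and $aS = \{a x_n\}_{n\in\N}$ (for $a \in \N$) are all pseudostrings of $\f$. Since a pseudostring is, by definition, a sequence $\{y_n\}_{n\in\N}$ satisfying $\f(y_n) = y_{n-1}$ for every $n \in \N_+$, the entire content of the lemma reduces to checking this single defining relation for each of the three constructed sequences. My plan is to verify this relation directly, using only the defining property $\f(x_n) = x_{n-1}$ of the original pseudostring together with the additivity of the endomorphism $\f$.

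First I would treat $S_k$. Writing $y_n = x_n + x_{n+k}$, I compute $\f(y_n) = \f(x_n) + \f(x_{n+k})$ using that $\f$ is a homomorphism, and then apply $\f(x_n) = x_{n-1}$ and $\f(x_{n+k}) = x_{n+k-1}$ to obtain $\f(y_n) = x_{n-1} + x_{n+k-1} = y_{n-1}$, valid for all $n \in \N_+$. Next, for $S_k^*$ with $z_n = x_n + \ldots + x_{n+k}$, I apply $\f$ termwise through additivity and shift each index down by one via the defining relation, giving $\f(z_n) = x_{n-1} + \ldots + x_{n+k-1} = z_{n-1}$. Finally, for $aS$ with $w_n = a x_n$, additivity (applied $a$ times, or equivalently the fact that $\f$ commutes with multiplication by the integer $a$) yields $\f(w_n) = a\,\f(x_n) = a\, x_{n-1} = w_{n-1}$.

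There is essentially no obstacle here; the only point requiring the hypothesis that $\f$ is a group \emph{endomorphism} (rather than an arbitrary self-map) is precisely the additivity used to push $\f$ past the sums, and for $aS$ the commutation of $\f$ with multiplication by $a$, which is automatic for homomorphisms of abelian groups. This is exactly why the statement is an easy lemma that the authors leave without proof: each verification is a one-line computation, and no distinctness or injectivity claims are made (the sequences are only asserted to be pseudostrings, not strings), so no combinatorial or cardinality argument is needed.
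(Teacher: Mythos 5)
Your proof is correct, and it is exactly the routine verification the authors had in mind: the paper explicitly states ``We leave without proof the following easy lemma,'' so the intended argument is precisely your direct check of the relation $\f(y_n)=y_{n-1}$ using additivity of $\f$ (and its commutation with integer multiplication for $aS$). Nothing is missing; the observation that only the pseudostring property, not distinctness, must be verified is the right reading of the statement.
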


\begin{deff}
Let $G$ be an abelian group, $\f\in\End(G)$ and let $S=\{x_n\}_{n\in\N}$ be a pseudostring of $\f$ in $G$.
\begin{itemize}
\item[(a)] The \emph{garland} of $S$ is $\mathcal{G}(S)=\{S_k\}_{k\in\N_+}$.
\item[(b)] The \emph{convex garland} of $S$ is $\mathcal{G}^*(S)=\{S_k^*\}_{k\in\N}$.
\item[(c)] The $(a_k)$-\emph{fan} of $S$, with $\{a_k\}_{k\in\N}$ an infinite sequence of pairwise distinct integers, is $\mathcal{F}_{(a_k)}(S)=\{a_kS\}_{k\in\N}$.
\end{itemize}
If the elements of $\mathcal{G}(S)$ (respectively, $\mathcal{G}^*(S)$, $\mathcal{F}_{(a_k)}(S)$) are pairwise disjoint strings we say that the garland (respectively, the convex garland, the fan) is \emph{proper}. If the elements of $\mathcal G(S)$ are non-singular strings (respectively, null strings), we say that $\mathcal G(S)$ is a \emph{non-singular garland} (respectively, a \emph{null garland}); and analogously for $\mathcal G^*(S)$ and $\mathcal F_{(a_k)}(S)$.
\end{deff}

Let $S=\{x_n\}_{n\in\N}$ be a pseudostring of an endomorphism $\f$ of an abelian group $G$; moreover, consider $\mathcal G(S)=\{S_k\}_{k\in\N_+}$, $\mathcal G^*(S)=\{S^*_k\}_{k\in\N_+}$ and $\mathcal F_{(a_k)}(S)=\{a_k S\}_{k\in\N}$ for an infinite sequence of pairwise distinct integers $\{a_k\}_{k\in\N}$. If $H\leq G$ and $\pi:G\to G/H$ is the canonical projection, let $\pi(\mathcal{G}(S))=\{\pi(S_k)\}_{k\in\N_+}$ , $\pi(\mathcal{G}^*(S))=\{\pi(S_k^*)\}_{k\in\N_+}$ and $\pi(\mathcal F_{(a_k)}(S))=\{\pi(a_k S)\}_{k\in\N}$.

\begin{lemma}\label{lemmaX}
Let $G$ be an abelian group, $\f\in\End(G)$, $H$ a $\f$-invariant subgroup of $G$ and $\pi:G\to G/H$ the canonical projection. If $S=\{x_n\}_{n\in\N}$ is a pseudostring of $\f$ in $G$, and $\{a_k\}_{k\in\N}$ is a sequence of pairwise distinct integers, then:
\begin{itemize} 
\item[(a)] $\pi(\mathcal{G}(S))=\mathcal{G}(\pi(S))$,
\item[(b)] $\pi(\mathcal{G}^*(S))=\mathcal{G}^*(\pi(S))$, and
\item[(c)] $\pi(\mathcal{F}_{(a_k)}(S))=\mathcal{F}_{(a_k)}(\pi(S))$. 
\end{itemize}
Assume that $S$ is a string;
\begin{itemize}
\item[(a)] if $\mathcal{G}(\pi(S))$ is proper, then $\mathcal{G}(S)$ is proper;
\item[(b)] if $\mathcal{G}^*(\pi(S))$ is proper, then $\mathcal{G}^*(S)$ is proper;
\item[(c)] if $\mathcal{F}_{(a_k)}(\pi(S))$ is proper, then $\mathcal{F}_{(a_k)}(S)$ is proper.
\end{itemize}
\end{lemma}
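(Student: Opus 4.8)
The plan is to treat the two halves of the statement separately. For the first three identities, I would simply use that $\pi$ is a group homomorphism, so it commutes with the additions and scalar multiplications defining the garland and fan members. Explicitly, since $\pi(x_n + x_{n+k}) = \pi(x_n) + \pi(x_{n+k})$, the $n$-th term of $\pi(S_k)$ coincides with the $n$-th term of $(\pi(S))_k$, whence $\pi(S_k) = (\pi(S))_k$ for every $k \in \N_+$; taking the family over $k$ gives $\pi(\mathcal{G}(S)) = \mathcal{G}(\pi(S))$. The same computation with $\pi(x_n + \ldots + x_{n+k}) = \pi(x_n) + \ldots + \pi(x_{n+k})$ yields $\pi(S_k^*) = (\pi(S))_k^*$ and hence (b), and $\pi(a_k x_n) = a_k \pi(x_n)$ yields $\pi(a_k S) = a_k \pi(S)$ and hence (c). This part is purely formal.

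For the second three implications I would reduce everything to Lemma \ref{quoziente}. First recall from the (unnamed) lemma preceding the definition of garland that each $S_k$, each $S_k^*$ and each $a_k S$ is a pseudostring of $\f$, so that Lemma \ref{quoziente} is applicable to them. Now suppose $\mathcal{G}(\pi(S))$ is proper. Using the identity $\pi(S_k) = (\pi(S))_k$ from the first half, properness means precisely that each $\pi(S_k)$ is a string of $\overline\f$ and that the sets $\pi(S_k)$ are pairwise disjoint. Applying Lemma \ref{quoziente}(a) to the pseudostring $S_k$ then gives that $S_k$ is a string of $\f$, while Lemma \ref{quoziente}(b) applied to $S_k$ and $S_{k'}$ (for $k \neq k'$) gives $S_k \cap S_{k'} = \emptyset$ from $\pi(S_k) \cap \pi(S_{k'}) = \emptyset$. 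Hence $\{S_k\}_{k\in\N_+} = \mathcal{G}(S)$ is a family of pairwise disjoint strings, i.e., $\mathcal{G}(S)$ is proper. The arguments for $\mathcal{G}^*(S)$ and $\mathcal{F}_{(a_k)}(S)$ are verbatim the same, replacing $S_k$ by $S_k^*$ and by $a_k S$ respectively and invoking the corresponding identities (b) and (c) from the first half.

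There is no genuinely hard step here; the proof is a translation between upstairs and downstairs through the homomorphism $\pi$. The only points requiring minimal care are: (i) verifying at the outset that the garland and fan members are pseudostrings, so that Lemma \ref{quoziente} can be invoked (this is exactly why Lemma \ref{quoziente} was stated for pseudostrings rather than for strings); and (ii) unpacking the ``proper'' hypothesis downstairs into its two components --- being a string and being pairwise disjoint --- each of which matches one part of Lemma \ref{quoziente}. The hypothesis that $S$ itself is a string is not strictly needed to obtain these conclusions, but it fixes the intended context in which the garlands and fans are used.
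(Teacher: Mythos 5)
Your proof is correct and follows exactly the paper's own route: the paper disposes of the three identities as ``clear'' (the homomorphism computation you spell out) and derives the three implications from Lemma \ref{quoziente}, precisely as you do via its parts (a) and (b) applied to the pseudostrings $S_k$, $S_k^*$ and $a_kS$. Your added observations --- that the unnamed lemma is what licenses applying Lemma \ref{quoziente} to the garland and fan members, and that the hypothesis that $S$ is a string is not actually used --- are accurate refinements of the same argument.
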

\begin{proof}
The first part is clear, while the second part follows from Lemma \ref{quoziente}.
\end{proof}

\subsection{Relations among the string numbers, and proof of Theorem A$^{**}$}

The hyperkernel of a group endomorphism has the following easy to prove property, which in some sense permits to reduce the computation of the null string number to injective group endomorphisms.

\begin{lemma}\label{quoinj}
Let $G$ be an abelian group and $\f\in \End(G)$. Then the endomorphism $\overline\f :G/\ker_\infty\f\rightarrow G/\ker_\infty\f$ induced by $\f$ is injective. \hfill$\qed$
\end{lemma}

Moreover, one can easily prove that $\ker_\infty\f \leq Q\Per(\f)$, and more precisely that $Q\Per(\f) = \Per(\f ) + \ker_\infty\f$. 

\smallskip
The next lemma characterizes the null strings of a group endomorphism. The proof is clear.

\begin{lemma}\label{null<->iper}
Let $G$ be a an abelian group, $\f\in\End(G)$ and $S$ a string of $\f$. Then $S$ is a null string of $\f$ if and only if $S\subseteq \ker_\infty\f$. In particular, $s_0(\f)=s(\f\restriction_{\ker_\infty\f})=s_0(\f\restriction_{\ker_\infty\f})$. \hfill $\qed$
\end{lemma}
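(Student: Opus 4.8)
The plan is to prove the biconditional first and then read off the three equalities as bookkeeping. Throughout write $S=\{x_n\}_{n\in\N}$, so that $\f(x_n)=x_{n-1}$ for every $n\in\N_+$ and hence $\f^n(x_n)=x_0$ for all $n$. For the forward implication, suppose $S$ is a null string, so $x_0\neq0$ and $\f^k(x_0)=0$ for some $k\in\N_+$. Then for each $n$ I compute $\f^{n+k}(x_n)=\f^k(\f^n(x_n))=\f^k(x_0)=0$, so $x_n\in\ker(\f^{n+k})\subseteq\ker_\infty\f$; this gives $S\subseteq\ker_\infty\f$ in one line.

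For the converse, if $S\subseteq\ker_\infty\f$ then in particular $x_0\in\ker_\infty\f$, i.e. $\f^k(x_0)=0$ for some $k\in\N$. Here I would first record the useful observation that in any string the value $0$ can occur only as the first term: if $x_j=0$ with $j\geq1$ then $x_{j-1}=\f(x_j)=\f(0)=0=x_j$, contradicting that the terms are distinct. Hence, once $x_0\neq0$, we automatically have $k\geq1$, so $S$ is a null string.

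For the ``in particular'' part, set $\psi=\f\restriction_{\ker_\infty\f}$, and note that $\ker_\infty\f$ is $\f$-invariant with $\ker_\infty\psi=\ker_\infty\f$, so $\psi$ and $\f$ agree as maps on the hyperkernel. The equality $s_0(\f)=s_0(\psi)$ is immediate from the forward implication: every null string of $\f$ lies inside $\ker_\infty\f$ and is therefore literally a null string of $\psi$, and conversely every null string of $\psi$ is one of $\f$; so the two classes of pairwise disjoint families coincide. It remains to prove $s(\psi)=s_0(\psi)$. The inequality $s_0(\psi)\leq s(\psi)$ is general. For the reverse, I use the biconditional together with the observation above: every string of $\psi$ is contained in $\ker_\infty\psi$, hence is a null string unless its first term is $0$, and in a family of pairwise disjoint strings at most one member can contain $0$ (two of them would meet at $0$). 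I then convert that exceptional member $S=\{x_n\}$ into a null string by replacing it with its shift $\{x_{n+1}\}_{n\in\N}$, which is again a string, is contained in $S$ (so still disjoint from the rest), and has first term $x_1\neq0$ with $\f(x_1)=x_0=0$, hence is a null string. This produces a family of pairwise disjoint null strings of the same cardinality, giving $s(\psi)\leq s_0(\psi)$ and closing the chain $s_0(\f)=s_0(\psi)=s(\psi)$.

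All the computations are one-liners; the only genuine subtlety, and the step I would be most careful about, is exactly this $x_0=0$ case, since a string sitting inside the hyperkernel with zero first term \emph{is} a string but is \emph{not} a null string. The shift argument shows this discrepancy costs nothing at the level of the cardinal invariants, which is presumably why the author regards the proof as clear.
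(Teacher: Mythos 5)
Your proof is correct, and it is in fact more careful than the paper itself, which states this lemma with no argument at all (``The proof is clear''). Your forward computation $\f^{n+k}(x_n)=\f^k(\f^n(x_n))=\f^k(x_0)=0$ and the identification of the null strings of $\f$ with the null strings of $\psi=\f\restriction_{\ker_\infty\f}$ are exactly the routine steps the authors take for granted. What your writeup adds, and what the paper's statement glosses over, is a genuine degenerate case: a string whose first term is $0$ can be contained in $\ker_\infty\f$ without being a null string, since the definition of null string demands $x_0\neq 0$. Such strings really exist --- for the left Bernoulli shift ${}_K\beta^\oplus$ take $x_0=0$, $x_1=(a,0,0,\dots)$, $x_2=(b,a,0,\dots)$, and so on --- so the biconditional as literally printed is false, and the honest statement is that $S$ is a null string if and only if $S\subseteq\ker_\infty\f$ and $x_0\neq 0$ (equivalently $0\notin S$, by your observation that $0$ can occur in a string only as its first term). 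Your tail-shift repair --- in a pairwise disjoint family at most one string contains $0$, and replacing that one by $\{x_{n+1}\}_{n\in\N}$ produces a null string contained in it --- is precisely what is needed to see that this defect is invisible at the level of the cardinal invariants, so that $s_0(\f)=s(\f\restriction_{\ker_\infty\f})=s_0(\f\restriction_{\ker_\infty\f})$ holds exactly as stated; without it one would only obtain the trivial inequality $s_0(\f\restriction_{\ker_\infty\f})\leq s(\f\restriction_{\ker_\infty\f})$. So your version should be preferred: the lemma is true in the form the paper actually uses it, but only after the one-line amendment you identified.
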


This lemma has an obvious consequence on the null string number of injective group endomorphisms:

\begin{corollary}\label{inj->s0=0}
Let $G$ be an abelian group and $\f\in\End(G)$. If $\f$ is injective, then $s_0(\f)=0$. \hfill $\qed$
\end{corollary}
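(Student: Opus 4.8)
The plan is to reduce everything to the triviality of the hyperkernel. First I would observe that injectivity of $\f$ forces $\ker(\f^n)=0$ for every $n\in\N$: since a composition of injective maps is injective, each power $\f^n$ is injective, so its kernel is trivial. Taking the union over all $n$ gives $\ker_\infty\f=\bigcup_{n\in\N}\ker(\f^n)=0$.

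With this in hand, the statement follows directly from Lemma \ref{null<->iper}, and I see two equivalent ways to finish. The cleanest is to invoke the ``in particular'' clause of that lemma, which asserts $s_0(\f)=s_0(\f\restriction_{\ker_\infty\f})$; since $\ker_\infty\f=0$, the restriction $\f\restriction_{\ker_\infty\f}$ is the endomorphism of the trivial group, which plainly admits no string at all, so $s_0(\f)=s_0(\f\restriction_0)=0$.

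Alternatively, and perhaps more transparently, I would argue by contradiction at the level of a single null string. Suppose $S=\{x_n\}_{n\in\N}$ were a null string of $\f$. By definition $x_0\neq 0$ while $\f^k(x_0)=0$ for some $k\in\N_+$; but injectivity of $\f^k$ then forces $x_0=0$, a contradiction. (Equivalently, Lemma \ref{null<->iper} says any null string must satisfy $S\subseteq\ker_\infty\f=0$, which is impossible since the terms of a string are pairwise distinct and hence $S$ is infinite, so it cannot lie inside $\{0\}$.) Either way no null string exists, whence the supremum defining $s_0(\f)$ is taken over the empty family and equals $0$.

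I do not expect any genuine obstacle here: the content is entirely absorbed into the identity $\ker_\infty\f=0$ for injective $\f$ together with the already-proved Lemma \ref{null<->iper}, which is exactly why the excerpt flags this as an \emph{obvious consequence}. The only point requiring a word of care is the passage from ``$\f$ injective'' to ``$\f^n$ injective for all $n$,'' and thence to the vanishing of the hyperkernel; once that is recorded, the rest is immediate.
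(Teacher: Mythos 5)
Your proposal is correct and follows exactly the route the paper intends: the paper states this corollary with no written proof, as an ``obvious consequence'' of Lemma \ref{null<->iper}, and your argument (injectivity of $\f$ gives $\ker_\infty\f=0$, so by that lemma no null string can exist, whence $s_0(\f)=0$) is precisely the reasoning being left to the reader. Both of your ways of finishing --- via the ``in particular'' clause or via the contradiction $\f^k(x_0)=0\Rightarrow x_0=0$ --- are sound and equivalent to the paper's implicit proof.
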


The following lemma is easy to prove and gives a useful characterization of non-singular strings.

\begin{lemma}
Let $\Gamma$ be a set, $\lambda:\Gamma\to \Gamma$ a self-map and $S$ a string of $\lambda$. Then $S$ is non-singular if and only if $S\cap Q\Per(\lambda)= \emptyset$. \hfill$\qed$
\end{lemma}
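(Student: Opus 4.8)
The plan is to establish the logically equivalent statement that $S$ is singular if and only if $S\cap Q\Per(\lambda)\neq\emptyset$, and to read off the claimed equivalence by negation. One implication is immediate: by definition $S$ is singular precisely when its first term $x_0$ lies in $Q\Per(\lambda)$, and since $x_0\in S$ this forces $x_0\in S\cap Q\Per(\lambda)$, so the intersection is nonempty. Negating, if $S\cap Q\Per(\lambda)=\emptyset$ then in particular $x_0\notin Q\Per(\lambda)$, i.e.\ $S$ is non-singular, which already gives the easy direction of the lemma.

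The substantive direction is the converse: assuming some term $x_k$ of $S$ belongs to $Q\Per(\lambda)$, I must deduce that the first term $x_0$ belongs to $Q\Per(\lambda)$ as well, whence $S$ is singular. The key auxiliary fact I would isolate first is that $Q\Per(\lambda)$ is forward-invariant under $\lambda$: if $y\in Q\Per(\lambda)$, witnessed by $\lambda^n(y)=\lambda^m(y)$ with $n<m$, then applying $\lambda$ to both sides yields $\lambda^n(\lambda(y))=\lambda^m(\lambda(y))$, so $\lambda(y)\in Q\Per(\lambda)$ with the same witnesses $n<m$. By an immediate induction, $\lambda^j(y)\in Q\Per(\lambda)$ for all $j\in\N$.

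I would then combine this with the pseudostring relation $\lambda(x_n)=x_{n-1}$, which iterates to $\lambda^k(x_k)=x_0$. Applying forward-invariance to $y=x_k\in Q\Per(\lambda)$ gives $x_0=\lambda^k(x_k)\in Q\Per(\lambda)$, so $S$ is singular, completing the contrapositive. If desired, one may further invoke the already-noted fact that a singular string satisfies $S\subseteq Q\Per(\lambda)$ to conclude that in fact \emph{every} term of $S$ is quasi-periodic. I do not anticipate any genuine obstacle; the only step demanding care is the direction of the indexing in the pseudostring, which must be tracked so that iterating $\lambda$ on $x_k$ lands precisely on the first term $x_0$ and not on some later term $x_j$ with $j>k$.
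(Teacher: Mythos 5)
Your proof is correct, and it is exactly the argument the authors intend: the paper marks this lemma as ``easy to prove'' and omits the proof entirely, so your write-up simply supplies the expected details. The two ingredients you use --- forward invariance of $Q\Per(\lambda)$ under $\lambda$ (with the same witnesses $n<m$) and the iterated pseudostring relation $\lambda^k(x_k)=x_0$ --- constitute the natural proof, and your care with the indexing direction is precisely the one point where it could go wrong.
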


This lemma has the following immediate consequence for group endomorphisms, since every null string is singular.

\begin{corollary}\label{non-sing:cap:null=emptyset}
Let $G$ be an abelian group and $\f\in\End(G)$. A non-singular string of $\f$ and a null string of $\f$ are always disjoint. \hfill$\qed$ 
\end{corollary}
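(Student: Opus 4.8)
The plan is to chain together two already-established facts: the characterization of non-singular strings supplied by the lemma immediately preceding this corollary, and the containment of null strings inside $Q\Per(\f)$. First I would recall that, by Lemma \ref{null<->iper}, any null string $S'$ of $\f$ satisfies $S'\subseteq\ker_\infty\f$. Combining this with the inclusion $\ker_\infty\f\leq Q\Per(\f)$ noted earlier in the excerpt, every null string is contained in $Q\Per(\f)$; in other words, every null string is singular, as flagged in the sentence introducing the corollary.

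On the other hand, the preceding lemma characterizes a non-singular string $S$ of $\f$ precisely by the condition $S\cap Q\Per(\f)=\emptyset$. So if $S$ is a non-singular string and $S'$ is a null string, then $S'\subseteq Q\Per(\f)$ while $S$ meets $Q\Per(\f)$ trivially, and therefore
$$
S\cap S'\subseteq S\cap Q\Per(\f)=\emptyset.
$$
This gives the disjointness claimed.

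I expect no genuine obstacle here: the statement is an immediate corollary of two facts already in hand, and the whole argument is a single inclusion-chasing step. The only point to keep straight is the direction of the two memberships relative to $Q\Per(\f)$ — non-singular strings avoid $Q\Per(\f)$ altogether, whereas null strings are entirely absorbed into it — so that the intersection is forced to be empty.
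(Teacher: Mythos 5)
Your proof is correct and follows essentially the same route as the paper: the corollary is stated there as an immediate consequence of the preceding lemma (non-singular strings are exactly those disjoint from $Q\Per(\f)$) together with the fact that every null string is singular, i.e., contained in $Q\Per(\f)$. Your justification of that latter fact via Lemma \ref{null<->iper} and the inclusion $\ker_\infty\f\leq Q\Per(\f)$ is just a slightly more explicit version of what the paper takes for granted.
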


Using only Corollary \ref{non-sing:cap:null=emptyset} it is possible to prove the following inequality involving the three string numbers. By making use of Proposition \ref{sing->null}, Theorem \ref{s=ns+s0} will show that in fact the equality holds.

\begin{theorem}\label{s>ns+s0}
Let $G$ be an abelian group and $\f\in \End(G)$. Then $s(\f) \geq ns(\f)+s_0(\f)$. 
\end{theorem}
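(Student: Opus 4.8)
The plan is to manufacture a large family of pairwise disjoint strings by simply gluing together a witnessing family of non-singular strings and a witnessing family of null strings, exploiting the fact that these two kinds of strings are automatically disjoint from one another (Corollary \ref{non-sing:cap:null=emptyset}). The essential input is therefore just that corollary, together with a careful bookkeeping of the two cases (finite versus infinite values).

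First I would dispose of the infinite case. If either $ns(\f)=\infty$ or $s_0(\f)=\infty$, then, since $s(\f)\geq\max\{ns(\f),s_0(\f)\}$, we get $s(\f)=\infty$; with the convention $a+\infty=\infty$ this yields $s(\f)=\infty=ns(\f)+s_0(\f)$, so the claimed inequality holds (in fact as an equality). Hence I may assume that both $ns(\f)=m$ and $s_0(\f)=n$ are finite natural numbers.

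Since a supremum of a subset of $\N$ that is itself a natural number is attained, I can choose a family $\mathcal F_1$ of exactly $m$ pairwise disjoint non-singular strings of $\f$ and a family $\mathcal F_2$ of exactly $n$ pairwise disjoint null strings of $\f$. These two families have no common member, because a null string is singular while every member of $\mathcal F_1$ is non-singular. Moreover, by Corollary \ref{non-sing:cap:null=emptyset}, each string in $\mathcal F_1$ is disjoint from each string in $\mathcal F_2$. Together with the pairwise disjointness inside each family, this shows that $\mathcal F_1\cup\mathcal F_2$ is a family of $m+n$ pairwise disjoint strings of $\f$, whence $s(\f)\geq m+n=ns(\f)+s_0(\f)$.

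The only point requiring attention is the cross-disjointness of a non-singular string with a null string, which is exactly the content of Corollary \ref{non-sing:cap:null=emptyset}; once that is invoked the rest is a direct counting argument, so I anticipate no genuine obstacle. In particular, the real work is deferred to the later Theorem \ref{s=ns+s0}: upgrading this inequality to the equality \eqref{s=ns+s0-eq} requires the reverse bound, which rests on Proposition \ref{sing->null} and is not needed here.
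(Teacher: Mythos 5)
Your proof is correct and follows essentially the same route as the paper: both arguments dispose of the zero/infinite cases via the trivial inequalities $s(\f)\geq ns(\f)$ and $s(\f)\geq s_0(\f)$, and in the finite case take witnessing families of non-singular and null strings, invoke Corollary \ref{non-sing:cap:null=emptyset} for cross-disjointness, and count the union. Your explicit remark that a finite supremum over $\N$ is attained is a small point the paper leaves implicit, but the substance is identical.
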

\begin{proof}
If $s(\f)=0$ there is nothing to prove. The obvious inequalities $s(\f) \geq ns(\f)$ and $s(\f) \geq s_0(\f)$ settle the case when at least one of $ns(\f)$ and $s_0(\f)$ is either $0$ or $\infty$. Suppose that $0<ns(\f)<\infty$ and $0<s_0(\f)<\infty$; let ${\mathcal S}=\{S_i\}_{i\in I}$ and ${\mathcal S}'=\{S'_j\}_{j\in J}$ be respectively the families of pairwise disjoint non-singular strings and null strings witnessing this. By Corollary \ref{non-sing:cap:null=emptyset}, $S_i\cap S'_j=\emptyset$ for every $i\in I$ and $j\in J$. Therefore ${\mathcal S}\cup {\mathcal S}'$ is a family of pairwise  disjoint strings witnessing $s(\f) \geq ns(\f)+s_0(\f)$.
\end{proof}


\begin{proposition}\label{sing->null}
Let $G$ be an abelian group and $\f\in\End(G)$. If $\f$ admits a singular string, then $\f$ admits a null string. In particular, if $s(\f)>0$ and $ns(\f)=0$, then $s_0(\f)>0$.
\end{proposition}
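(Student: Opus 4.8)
The plan is to manufacture, out of a given singular string, an infinite pseudostring that lies entirely inside the hyperkernel $\ker_\infty\f$, and then to invoke Lemmas \ref{infinite->string} and \ref{null<->iper} to extract a genuine null string from it.

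First I would fix a singular string $S=\{x_n\}_{n\in\N}$ of $\f$, so that $x_0\in Q\Per(\f)$; choosing $a<b$ in $\N$ with $\f^a(x_0)=\f^b(x_0)$ and setting $d=b-a>0$, I record the relation $\f^{a}(x_0)=\f^{a+d}(x_0)$. Next I would form the ``difference'' sequence $z_n:=x_n-x_{n+d}$ for $n\in\N$. Since $\f(x_n)=x_{n-1}$, one checks at once that $\f(z_n)=x_{n-1}-x_{n+d-1}=z_{n-1}$, so $\{z_n\}_{n\in\N}$ is a pseudostring of $\f$ (it is the analogue, with a minus sign in place of a plus, of the garland term $S_d$). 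The reason for taking a difference is that $z_0=x_0-x_d$ satisfies $\f^{a+d}(z_0)=\f^{a+d}(x_0)-\f^{a}(x_0)=0$ by the relation above (using $\f^{d}(x_d)=x_0$), while $z_0\neq0$ because the terms of a string are pairwise distinct and $d\geq1$. Hence $z_0\in\ker_\infty\f\setminus\{0\}$, and since $\f^{n}(z_n)=z_0$ the whole pseudostring lies in $\ker_\infty\f$.

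The one thing that really needs checking is that $\{z_n\}$ is infinite, after which Lemma \ref{infinite->string} supplies a string $S'\subseteq\{z_n\}\subseteq\ker_\infty\f$, and Lemma \ref{null<->iper} certifies that $S'$ is a null string. I would prove infiniteness by contradiction: if the set $\{z_n\}$ were finite, some value would repeat, say $z_n=z_{n'}$ with $n<n'$, which forces $z_n$ to be a periodic point of $\f$; but a periodic point lying in $\ker_\infty\f$ must be $0$ (iterating its period beyond the power of $\f$ that annihilates it gives $0$), whence $z_0=\f^{n}(z_n)=0$, contradicting $z_0\neq0$. Ruling out this collapse of the difference sequence to a finite, hence periodic-valued, set is the only genuine obstacle; the rest is formal.

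Finally, for the ``in particular'' clause I would use the dichotomy that every string is either singular or non-singular: a string $S$ meets $Q\Per(\f)$ if and only if its first term lies in $Q\Per(\f)$, by forward invariance of $Q\Per(\f)$. Thus if $s(\f)>0$ and $ns(\f)=0$, then $\f$ possesses a string $S$ which cannot be non-singular and so must be singular; the first part of the proposition then yields a null string, giving $s_0(\f)>0$.
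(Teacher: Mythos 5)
Your proof is correct. It rests on the same basic device as the paper's --- subtracting from the string a shifted copy of itself, so that the resulting pseudostring is annihilated by powers of $\f$ --- but the execution is genuinely different. The paper extends the string backwards by setting $x_{-n}=\f^n(x_0)$, chooses the \emph{greatest} $b\in\Z$ for which $x_b=x_a$ with some $a<b$, and defines $y_n=x_{a+n+1}-x_{b+n+1}$; the maximality of $b$ then yields by direct computation both that the $y_n$ are pairwise distinct and that $y_0\neq0$, while $\f(y_0)=x_a-x_b=0$ gives nullness. (The existence of such a greatest $b$ is asserted there without proof, and itself needs an argument in the spirit of Lemma \ref{infinite->string}.) You instead never leave the given string: you shift by a quasi-period $d$ of $x_0$ and set $z_n=x_n-x_{n+d}$, for which $z_0\neq0$ is trivial and containment of the whole pseudostring in $\ker_\infty\f$ is a one-line computation; pairwise distinctness, no longer automatic, is recovered from the observation that a periodic point lying in $\ker_\infty\f$ must be $0$. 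Note that your repetition argument actually shows that no $z_n$ is periodic and no $z_n$ is zero, so $\{z_n\}_{n\in\N}$ is already a null string in its own right: the appeal to Lemma \ref{infinite->string} could be dropped, and the nonvanishing of all terms is exactly what ensures the extracted string has nonzero first term, as the definition of null string requires (a point on which Lemma \ref{null<->iper}, read literally, is silent). In sum: the paper's maximal-$b$ trick buys stringness in one stroke at the cost of two-sided indexing and an unproved existence claim, whereas your hyperkernel argument is fully self-contained; the ``in particular'' clause is handled identically in both.
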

\begin{proof}
Let $S=\{x_n\}_{n\in\N}$ be a singular string of $\f$ in $G$. Define $x_{-n}=\f^n(x_0)$ for every $n\in\N$. There exists a greatest $b\in\Z$ such that $x_b=x_a$ for some $a<b$ in $\Z$. Then $S'=\{y_n\}_{n\in\N}$, where $y_n=x_{a+n+1}-x_{b+n+1}$ for every $n\in\N$, is a null string. Indeed, $\f(y_n)=\f(x_{a+n+1})-\f(x_{b+n+1})=x_{a+n}-x_{b+n}=y_{n-1}$ for every $n\in\N_+$; moreover, if there exist $l<m$ in $\N$ such that $y_l=y_m$, then $x_{a+l+1}-x_{b+l+1}=x_{a+m+1}-x_{b+m+1}$. Applying $\f^{l+1}$ we have that $x_{a+m-l}=x_{b+m-l}$, in contradiction with the choice of $b$, since $m-l>0$. This shows that $S'$ is a string of $\f$. To conclude that $S'$ is a null string note that $y_0=x_{a+1}-x_{b+1}$ and $\f(y_0)=\f(x_{a+1}-x_{b+1})=x_a-x_b=0$.
\end{proof}


 
Example \ref{utilissimo} will show that the conjunction of $s(\f)=\infty$ and $ns(\f)=0$ may occur (in this case, $s_0(\f)=s(\f)=\infty$). In fact, in the hypothesis of the example, if $S=\{x_n\}_{n\in\N}$ is a string, then there exists $n\in\N_+$ such that $\f^n(x_0)=0$. Such a string is a null string, so in particular singular.

\medskip
The following proposition gives a sufficient condition for the existence of a null string.

\begin{proposition}\label{Hopfs}
Let $G$ be an abelian group and $\f\in\End(G)$. If $\f$ is non-injective and surjective, then $s_0(\f) > 0$.
\end{proposition}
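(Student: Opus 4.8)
The plan is to exploit the two hypotheses in complementary ways: non-injectivity pins down a nonzero element killed by $\f$, while surjectivity lets me lift it infinitely far back along $\f$. This produces a pseudostring whose first term lies in $\ker\f\setminus\{0\}$, and the only real work is to check that this pseudostring has pairwise distinct terms, so that it is an honest string; being killed by $\f$, it will then be a null string.

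Concretely, first I would use that $\f$ is non-injective to choose $x_0\in\ker\f$ with $x_0\neq0$, so that $\f(x_0)=0$. Then, invoking surjectivity repeatedly, I would pick $x_n$ with $\f(x_n)=x_{n-1}$ for every $n\in\N_+$, obtaining a pseudostring $S=\{x_n\}_{n\in\N}$ of $\f$ with first term $x_0$. Since $x_0\neq0$ while $\f(x_0)=0$, once $S$ is known to be a string it is a null string directly from the definition (equivalently, every $x_n$ lies in $\ker(\f^{n+1})\subseteq\ker_\infty\f$, so $S\subseteq\ker_\infty\f$ and Lemma \ref{null<->iper} applies). Hence $s_0(\f)>0$.

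The main step — and the only place where the choice $x_0\in\ker\f\setminus\{0\}$ is essential — is verifying that the $x_n$ are pairwise distinct. Suppose $x_n=x_m$ with $n<m$ in $\N$; applying $\f^n$ gives $x_0=x_{m-n}$, that is, $x_j=x_0$ with $j=m-n\geq1$. Applying $\f$ to $x_j=x_0$ and using $\f(x_0)=0$ forces $x_{j-1}=\f(x_j)=0$, and then the downward relations $x_{i-1}=\f(x_i)$ propagate $x_{j-1}=x_{j-2}=\cdots=x_0=0$, contradicting $x_0\neq0$. Thus the $x_n$ are pairwise distinct, $S$ is a (null) string, and the proof is complete. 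I expect no genuine obstacle beyond this distinctness check, which is essentially the observation that a nonzero kernel element can never reappear along a backward $\f$-orbit.

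As an alternative, more ``high-level'' route relying on the machinery already developed, one could instead restrict to the hyperkernel: the endomorphism $\psi=\f\restriction_{\ker_\infty\f}$ is surjective by the same lifting argument and non-injective since $\ker\psi=\ker\f\neq0$; a non-injective map is not locally periodic (by the injectivity established in the proof of Proposition \ref{Per>NoStrings}(a)), so $\sc(\psi)=\ker_\infty\f\neq\Per(\psi)$, whence $s(\psi)>0$ by Theorem \ref{s=0<->G=Per}(a). Then $s_0(\f)=s(\f\restriction_{\ker_\infty\f})=s(\psi)>0$ via Lemma \ref{null<->iper}. This packages the same underlying idea but invokes more of the earlier results, so I would favour the direct construction above.
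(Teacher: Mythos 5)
Your proposal is correct and follows essentially the same route as the paper's proof: pick $x_0\in\ker\f\setminus\{0\}$, lift it repeatedly by surjectivity to a pseudostring, and verify distinctness by observing that $x_n=x_m$ with $n<m$ would force $x_0=x_{m-n}$ and hence, via $\f(x_0)=0$ and the backward relations, $x_0=0$. Your spelled-out propagation argument and your alternative route through Lemma \ref{null<->iper} and Theorem \ref{s=0<->G=Per}(a) are both sound, but the direct construction is exactly the paper's argument, only written in more detail.
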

\begin{proof} 
As $\f$ is non-injective, there exists $x_0\ne 0$ in $G$ such that $\f(x_0) =0$. Now define $S=\{x_n\}_{n\in\N}$ by induction, using the fact that $\f$ is surjective. To show that $S$ is a null string, it remains to prove that $S$ is a string. If $x_n=x_m$ for some $n\leq m$ in $\N$, then $x_0=x_{m-n}$ and so $\f(x_{m-n})=0$ implies $m=n$.
\end{proof}

Recall that a {\em Hopfian group} is a group for which every surjective endomorphism is an isomorphism. (Equivalently, a group is Hopfian if and only if it is not isomorphic to any of its proper quotients.)
Since Proposition \ref{Hopfs} implies that having null string number zero is a sufficient condition for a surjective group endomorphism to be an automorphism, we get the following

\begin{corollary}
Let $G$ be an abelian group. If $s_0(\f)=0$ for every $\f\in\End(G)$, then $G$ is Hopfian. \hfill$\qed$
\end{corollary}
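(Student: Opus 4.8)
The plan is to argue by contraposition, reducing everything to a direct application of Proposition \ref{Hopfs}. First I would record the convenient reformulation of the Hopfian property: a group $G$ is Hopfian precisely when every surjective endomorphism of $G$ is an isomorphism, equivalently (since an endomorphism is an isomorphism iff it is simultaneously surjective and injective) when every surjective $\f\in\End(G)$ is injective. Thus negating the conclusion gives a concrete object to work with.

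Next, assuming $G$ is \emph{not} Hopfian, this reformulation hands me a surjective endomorphism $\f\in\End(G)$ that fails to be injective. Such a $\f$ meets exactly the hypotheses of Proposition \ref{Hopfs}, namely non-injectivity together with surjectivity, so that proposition yields $s_0(\f)>0$. Consequently there exists at least one endomorphism of $G$ with strictly positive null string number, which is precisely the negation of the standing assumption that $s_0(\f)=0$ for all $\f\in\End(G)$. This chain establishes the contrapositive, hence the corollary.

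I do not expect any genuine obstacle here: the entire substantive content is already encapsulated in Proposition \ref{Hopfs}, which constructs a null string from a nonzero kernel element by pulling it back along the surjection. The only point requiring a (trivial) remark is the equivalence between the two formulations of the Hopfian condition, which is immediate. In writing up I would therefore keep the argument to its essential two-sentence contrapositive, citing Proposition \ref{Hopfs} for the implication ``surjective and non-injective $\Rightarrow s_0(\f)>0$'' and invoking the definition of Hopfian for the reverse reading.
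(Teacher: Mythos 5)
Your proposal is correct and is essentially the paper's own argument: the paper also deduces the corollary directly from Proposition \ref{Hopfs}, which shows that a surjective non-injective endomorphism has $s_0(\f)>0$, so vanishing of all null string numbers forces every surjective endomorphism to be injective, i.e.\ an automorphism. Phrasing it as a contrapositive rather than a direct implication is an inessential difference.
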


It is worth asking whether the converse implication of this corollary holds. So we leave open the following problem.

\begin{question}
If $G$ is a Hopfian abelian group, is it true that every $\f\in\End(G)$ has $s_0(\f)=0$?
\end{question}

The next proposition shows in particular that $s_0(\f)>0$ implies $s_0(\f)=\infty$ for any endomorphism $\f$ of an abelian group $G$. This allows us to prove Theorem A$^{**}$.

\begin{proposition}\label{caso0}
Let $G$ be an abelian group and $\f\in\End(G)$. If $\f$ admits a null string $S=\{x_n\}_{n\in\N}$, then $\mathcal{G}(S)$ is proper and null.
\end{proposition}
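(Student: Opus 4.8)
The plan is to route everything through the \emph{$\f$-order} function $o(x)=\min\{j\in\N:\f^j(x)=0\}$, well defined for every $x\in\ker_\infty\f$. Since $S$ is a null string, Lemma \ref{null<->iper} gives $S\subseteq\ker_\infty\f$, so every $x_n$ lies in the domain of $o$. Fix the minimal $k_0\in\N_+$ with $\f^{k_0}(x_0)=0$, so that $o(x_0)=k_0$; applying $\f^n$ to $x_n$ returns $x_0$, whence $\f^{n+k_0}(x_n)=0$ while $\f^{n+k_0-1}(x_n)=\f^{k_0-1}(x_0)\neq0$, i.e.\ $o(x_n)=n+k_0$ for every $n\in\N$. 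The single algebraic fact I would record first is the valuation-type identity: if $o(a)\neq o(b)$ then $o(a+b)=\max\{o(a),o(b)\}$. This is immediate, since for $m=\max\{o(a),o(b)\}$ the map $\f^{m-1}$ annihilates the summand of smaller order but not the other, while $\f^{m}$ annihilates both.

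With this in hand I would compute, for each $k\in\N_+$ and $n\in\N$, that $o(x_n)=n+k_0<n+k+k_0=o(x_{n+k})$ because $k\geq1$, and hence $o(x_n+x_{n+k})=n+k+k_0$. Recalling that each $S_k=\{y_n^{(k)}\}_{n\in\N}$ with $y_n^{(k)}=x_n+x_{n+k}$ is a pseudostring of $\f$, the orders $o(y_n^{(k)})=n+k+k_0$ are pairwise distinct as $n$ varies, so the $y_n^{(k)}$ are pairwise distinct and $S_k$ is a string. Moreover its first term $y_0^{(k)}=x_0+x_k$ has order $k+k_0\geq1$, so $y_0^{(k)}\neq0$, and $\f^{k+k_0}(y_0^{(k)})=0$; thus each $S_k$ is a null string (equivalently, $S_k\subseteq\ker_\infty\f$ and one invokes Lemma \ref{null<->iper}). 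This already shows $\mathcal{G}(S)$ is a null garland.

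It remains to prove that the $S_k$ are pairwise disjoint, which I would again read off from the order. Suppose $y_n^{(k)}=y_m^{(k')}$ for some $n,m\in\N$ and $k,k'\in\N_+$; comparing orders forces $n+k=m+k'$, so the top-order summands coincide, $x_{n+k}=x_{m+k'}$, and cancelling leaves $x_n=x_m$. Distinctness of the $x_i$ then gives $n=m$ and hence $k=k'$, so $S_k\cap S_{k'}=\emptyset$ whenever $k\neq k'$ (alternatively one may apply Lemma \ref{disgiunte} and check only that neither first term lies in the other string). Since $\{S_k\}_{k\in\N_+}$ is an infinite family of pairwise disjoint null strings, $\mathcal{G}(S)$ is a proper null garland, as required. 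I do not expect a genuine obstacle here: the only point demanding care is the exact computation $o(x_n)=n+k_0$ (which rests on the minimality of $k_0$) together with the strict inequality $o(x_n)<o(x_{n+k})$ coming from $k\geq1$; once the valuation identity is available, the properness and nullity of the garland are pure bookkeeping with orders.
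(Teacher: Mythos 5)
Your proof is correct. It rests on the same underlying mechanism as the paper's --- apply a suitable power of $\f$ so that every summand except the top one vanishes --- but it is organized genuinely differently, and the difference buys something. The paper's proof begins with ``suppose without loss of generality that $\f(x_0)=0$'' and then derives the contradiction $x_0=0$ from any alleged coincidence: for an equality inside $S_k$ it applies $\f^{m+k}$, and for an intersection of $S_k$ and $S_j$ it first reduces to first terms via Lemma \ref{disgiunte} and then applies $\f^{\max\{k,n+j\}}$. You instead keep the original string with arbitrary $k_0=o(x_0)$, set up the order function on $\ker_\infty\f$ together with the ultrametric identity $o(a+b)=\max\{o(a),o(b)\}$ whenever $o(a)\neq o(b)$, and read distinctness, nullity and disjointness off the arithmetic of orders. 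Two concrete advantages of your route: first, the paper's normalization tacitly replaces $S$ by a different null string (one must prepend $\f^{k_0-1}(x_0),\dots,\f(x_0)$ to $S$ and re-index), so, read literally, it proves properness of the garland of that modified string, and recovering the claim for $\mathcal{G}(S)$ itself needs the extra observation that each $S_k$ is a tail of the corresponding string of the modified garland; your argument proves the stated claim directly. Second, you verify explicitly that each $S_k$ is a null string (nonzero first term annihilated by a power of $\f$), which is part of the statement but is left to the reader in the paper's proof. The paper's version is shorter once the normalization is granted; yours is self-contained, and the valuation bookkeeping makes all three verifications instances of one computation.
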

\begin{proof}
Suppose without loss of generality that $\f(x_0)=0$.
Consider the garland $\mathcal G(S)=\{S_k\}_{k\in\N_+}$ of $S$.
We show first that $S_k$ is a string for every $k\in\N_+$. To this end let $k\in\N_+$ and by contradiction suppose that there exist $n<m$ in $\N$ such that $x_{n}+x_{n+k}=x_{m}+x_{m+k}$. From this relation we obtain, applying $\f^{m+k}$, that $x_{0}=0$, a contradiction.
We verify now that the $S_k$ are pairwise disjoint strings. By Lemma \ref{disgiunte}, it suffices to fix $k\in\N_+$ and to prove that $x_0+x_k\not\in S_j$ for every $i\neq k$ in $\N_+$. By contradiction suppose that there exists $n\in\N_+$ such that $x_0+x_k= x_{n}+x_{n+j}$. Since $x_0 \ne x_n$ as $n>0$, we have $k \ne n+j$. Let $i=\text{max}\{k,n+j\}$; applying $\f^i$, we obtain $x_0=0$, a contradiction.
\end{proof}

We can now prove the equality in Equation \eqref{s=ns+s0-eq}.

\begin{theorem}\label{s=ns+s0}
Let $G$ be an abelian group and let $\f\in\End(G)$. Then $s(\f)= ns(\f)+s_0(\f)$.
\end{theorem}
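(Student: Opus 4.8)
The plan is to combine the inequality already established in Theorem \ref{s>ns+s0}, namely $s(\f)\ge ns(\f)+s_0(\f)$, with the reverse inequality $s(\f)\le ns(\f)+s_0(\f)$; together these give the claimed equality. For the reverse inequality I would first extract from Proposition \ref{caso0} the dichotomy for the null string number: since any null string of $\f$ generates a proper null garland, the existence of a single null string already produces infinitely many pairwise disjoint null strings, so $s_0(\f)\in\{0,\infty\}$. This lets me split the argument into just two cases according to the value of $s_0(\f)$, and, crucially, it means I never have to appeal to a corresponding dichotomy for $s(\f)$ or $ns(\f)$.

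In the case $s_0(\f)=\infty$, the right-hand side $ns(\f)+s_0(\f)$ equals $\infty$ by the convention $a+\infty=\infty$, while Theorem \ref{s>ns+s0} forces $s(\f)=\infty$ as well; hence both sides are $\infty$ and the equality holds trivially. The substantive case is $s_0(\f)=0$. Here the dichotomy just invoked says that $\f$ admits no null string at all, and I would then apply the contrapositive of Proposition \ref{sing->null}: since $\f$ has no null string, $\f$ can have no singular string either. Consequently every string of $\f$ is non-singular, so any family of pairwise disjoint strings witnessing $s(\f)$ is simultaneously a family of pairwise disjoint non-singular strings, giving $s(\f)\le ns(\f)$. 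Combined with $s(\f)\ge ns(\f)+s_0(\f)=ns(\f)$ from Theorem \ref{s>ns+s0}, this yields $s(\f)=ns(\f)=ns(\f)+s_0(\f)$.

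The point that needs care, and which I regard as the conceptual crux rather than a computational one, is precisely this reduction to two cases. The naive attempt to prove $s(\f)\le ns(\f)+s_0(\f)$ by directly splitting an arbitrary family of pairwise disjoint strings into its singular and non-singular members runs into the difficulty that converting the singular strings into null strings via Proposition \ref{sing->null} need not preserve disjointness or the cardinality of the family. The case analysis sidesteps this entirely: the only quantitative input is the $s_0$-dichotomy of Proposition \ref{caso0}, after which the contrapositive of Proposition \ref{sing->null} does the remaining work by eliminating singular strings outright whenever $s_0(\f)=0$.
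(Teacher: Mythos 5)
Your proof is correct and takes essentially the same route as the paper's: both combine Theorem \ref{s>ns+s0} with the dichotomy for $s_0$ from Proposition \ref{caso0} and with Proposition \ref{sing->null} (used contrapositively) to conclude that every string is non-singular when $s_0(\f)=0$, giving $s(\f)=ns(\f)$. Your two-case split on $s_0(\f)$ merely streamlines the paper's three-case analysis, which first disposes of $s(\f)=0$ and then distinguishes $s_0(\f)>0$ from $s_0(\f)=0$, without changing any of the underlying ingredients.
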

\begin{proof}
By Theorem \ref{s>ns+s0}, $s(\f) \geq ns(\f)+s_0(\f)$.
We have to prove that equality holds. If $s(\f)=0$ we are done. Assume that $s(\f)>0$. By Proposition \ref{sing->null}, either $ns(\f)>0$ or $s_0(\f)>0$. If $s_0(\f)>0$, then $s_0(\f)=\infty$ by Proposition \ref{caso0} and so we get the desired equality. So suppose that $s_0(\f)=0$. Let $S$ be a string of $\f$. If $S$ is singular, then Proposition \ref{sing->null} implies the existence of a null string, against our assumption. Then $S$ has to be non-singular. This shows that $s(\f)=ns(\f)$.
\end{proof}

Now we can prove Theorem A$^{**}$.

\begin{proof}[\bf Proof of Theorem A$^{**}$]
(i)$\Rightarrow$(ii) follows Proposition \ref{caso0}, and (ii)$\Rightarrow$(i) is trivial.

\smallskip
(i)$\Rightarrow$(iii) For each null string $S$ of $\f$ in $G$ the $\f$-invariant subgroup $H$ generated by $S$ (i.e., the subgroup of $G$ generated by $\bigcup_{n\in \N} \f^n(S)$) satisfies $\f(H)=H$, so $H\leq\sc(\f)$, and $H\leq\ker_\infty\f$. Then $\sc(\f)\cap\ker_\infty\f\neq0$.

\smallskip
(iii)$\Rightarrow$(i) Let $H=\sc(\f)\cap\ker_\infty\f\neq0$. Then $\f(H)=H$, because $x\in H$ implies that $x=\f(y)$ for some $y\in\sc(\f)$, and $y\in\ker_\infty\f$ since $x\in\ker_\infty \f$, so that $y\in H$. Take now any $x\in H\setminus\{0\}$ and build a pseudostring $S=\{x_n\}_{n\in\N}$ of $\f$ in $H$ with $x_0 =x$, using the fact that $\f(H)=H$. Then $S$ is a string since $x_0=x_n$ for some $n\in\N$ would imply $\f^k(x_0)\neq 0$ for every $k\in\N_+$, against the hypothesis that $x_0\in\ker_\infty\f$. Moreover, $S$ is a null string as $\f^k(x_0)=0$ for some $k\in\N_+$, since $x_0\in\ker_\infty\f$.
\end{proof}

\subsection{Examples}

One can see that no endomorphism of $\Z$ has strings:

\begin{example}
Let $\f\in\End(\Z)$.
Then $\f=\mu_a$ for some $a\in\Z$. 
Since for every $x\in\Z$, $x$ is not divisible by $a$ infinitely many times, every pseudostring of $\f$ is finite, so that it cannot be a string; hence $s(\f)=0$.
\end{example}

The situation is quite different if we consider non-cyclic free abelian groups:

\begin{example}\label{CorZan}
Let $G=\mathbb{Z}\oplus\mathbb{Z}$, and let $\f\in\text{End}(G)$ be defined by $\f(e_1)=(e_1)$ and $\f(e_2)=e_1+e_2$, where $e_1=(1,0)$ and $e_2=(0,1)$.
\begin{itemize} 
\item[(a)] Then $ns(\f) = \infty$ (so also $s(\f) = \infty$).

Indeed, let $S=\left\{(1,1),(0,1),(-1,1),\ldots,(-n,1),\ldots\right\}$. It is easy to see that $S$ is a non-singular string and that the fan $\mathcal F_{(k)}(S)$, for the sequence $\{k\}_{k\in\N}=\N$, is proper and non-singular.

\item[(b)] Moreover, the $\f$-invariant subgroup $H = \Per(\f)$ of $G$ satisfies $s(\f\restriction_H) = 0 = s(\overline \f)$ (so also $ns(\f\restriction_H)=0=ns(\overline\f)$). 

Indeed, $s(\f\restriction_H)=0$ by Proposition \ref{Per>NoStrings}(a). Moreover, $\Per(\f) =\langle e_1\rangle = \Z \oplus\{0\}$ and $G/\Per(\f)\cong\Z$. The endomorphism $\overline \f:G/\Per(\f)\to G/\Per(\f)$ induced by $\f$ is exactly $id_{G/\Per(\f)}$, because $\f(e_2)\in e_2 + \Per(\f)$; hence also $s(\overline\f)=0$. 
\item[(c)] Finally, since $\f$ is injective, $s_0(\f)=0$ by Corollary \ref{inj->s0=0}.
\end{itemize} 
\end{example}

Observe that (a) and (b) of Example \ref{CorZan} imply that the counterpart of the Addition Theorem (see Fact \ref{ent}(C)) for $s(-)$ and $ns(-)$ fails spectacularly.

\medskip
In the next example we see that every non-periodic endomorphism of $\mathbb Q$ has infinitely many (non-singular) strings.

\begin{example}\label{Q}
Let $G=\Q$ and $\f\in\End(G)\cong \Q$. Note that the only periodic endomorphisms of $\Q$ are $0_\Q$ and $\pm id_\Q$, which have no strings by Lemma \ref{q-p}. Assume that $\f\neq 0_\Q, id_\Q$. Then $s(\f) =ns(\f)= \infty$, while $s_0(\f)=0$ by Corollary \ref{inj->s0=0}. 

Indeed, $\f$ is the multiplication by some $r\in\mathbb{Q}\setminus\{0,\pm 1\}$. Consider the string $S=\left\{\frac{1}{r^n}\right\}_{n\in\N}$ and
let $\{p_k\}_{k\in\N}$ be a sequence of distinct primes coprime with the numerator and with the denominator of $r$. Then $\mathcal F_{(p_k)}(S)$ is proper and non-singular.
\end{example}

In the following example we see that a locally quasi-periodic endomorphism $\f$ may have strings, even if they have to be singular strings in view of Proposition \ref{Per>NoStrings}(b).

\begin{example}\label{utilissimo}
Let $p$ be a fixed prime and $\mathbb{Z}(p^{\infty})$ the Pr\"ufer group, with generators $c_n=\frac{1}{p^n}+\Z$, for $n\in \N_+$. Consider $\mu_p\in \End(\mathbb{Z}(p^{\infty}))$. 
\begin{itemize}
\item[(a)] Then $\mu_p$ is locally quasi-periodic.

Indeed, for $x\in\Z(p^\infty)$, there exists $n\in\N_+$ such that $o(x)=p^n$, and so $\mu_p^n(x)=0$.

\item[(b)] Moreover, $s_0(\mu_p)=\infty$, and so in particular $s(\mu_p)=\infty$.

Indeed, it is easy to see that $S=\{c_n\}_{n\in\N_+}$ is a null string of $\mu_p$. By Proposition \ref{caso0} the garland $\mathcal G(S)=\{S_k\}_{k\in\N_+}$ is proper and null.

\item[(c)]It is possible to verify that also $\mathcal G^*(S)$ is proper. On the other hand, for any sequence $\{a_k\}_{k\in\N}$ of pairwise distinct natural numbers, $\mathcal F_{(a_k)}(S)$ is not proper. 

Actually, $mS$ is a string only if $p^2\not | m$, while $mS$ and $kS$ are disjoint for $m,k \in \Z$ coprime to $p$ and such that $m\not \equiv k \mod p$.
\end{itemize}
\end{example}

Now one can prove the following result, which gives a condition equivalent to $s(\mu_p)=0$. It was inspired by Example \ref{sc-ex}(a), and a consequence is that the implications in Example \ref{sc-ex}(b,c) can be reverted, as shown by Corollary \ref{sc-rev}.

\begin{theorem}\label{s(mup)=0}
Let $G$ be an abelian group and $p$ a prime. Then $s(\mu_p)=0$ if and only if $d_p(G)\subseteq t(G)$ and $d_p(t_p(G))=0$.
\end{theorem}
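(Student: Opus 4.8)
The plan is to split the equivalence into its two implications and, after a common reduction to the surjective core, feed each one into the two corollaries of the preceding subsection. The shared first step is the reduction $s(\mu_p)=s(\mu_p\restriction_{d_p(G)})$, which is immediate from Proposition \ref{s-c} together with Example \ref{sc-ex}(a), where $\sc(\mu_p)=d_p(G)$ is computed. The whole argument then takes place inside the $p$-divisible subgroup $H:=d_p(G)$, on which $\mu_p$ restricts to a surjective endomorphism.

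For the ``if'' direction, suppose $d_p(G)\subseteq t(G)$ and $d_p(t_p(G))=0$. First I would observe that $H=d_p(G)$ is torsion, so Corollary \ref{dptp} is applicable to $H$ once its hypothesis $d_p(t_p(H))=0$ is checked. This follows from the monotonicity of the operator $d_p$ under inclusion of subgroups: since $t_p(H)=t_p(d_p(G))\subseteq t_p(G)$, the subgroup $d_p(t_p(H))$ is a $p$-divisible subgroup of $t_p(G)$, hence $d_p(t_p(H))\subseteq d_p(t_p(G))=0$. Corollary \ref{dptp} then gives $s(\mu_p\restriction_H)=0$, and the reduction above yields $s(\mu_p)=0$.

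For the ``only if'' direction, assume $s(\mu_p)=0$. The containment $d_p(G)\subseteq t(G)$ is exactly the contrapositive of Corollary \ref{dp-notin-t}. It remains to force $d_p(t_p(G))=0$, and here I would argue by contradiction. If $D:=d_p(t_p(G))$ were nonzero, then, being a nonzero divisible $p$-group, it would contain a copy $C$ of the Pr\"ufer group $\Z(p^\infty)$ (by the structure theory of divisible abelian groups). Since $C$ is a $\mu_p$-invariant subgroup of $G$ and $\mu_p\restriction_C$ is conjugate, via the isomorphism $C\cong\Z(p^\infty)$, to multiplication by $p$ on $\Z(p^\infty)$, Lemma \ref{cbi}, Lemma \ref{subgroups}, and the computation $s(\mu_p)=\infty$ on $\Z(p^\infty)$ from Example \ref{utilissimo}(b) give $s(\mu_p)\geq s(\mu_p\restriction_C)=\infty$, contradicting $s(\mu_p)=0$. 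Hence $D=0$.

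All the individual steps are short; the only point requiring genuine care is the interplay between the two operators $d_p(\cdot)$ and $t_p(\cdot)$. Concretely, the crux of the ``if'' part is the monotonicity inclusion $d_p(t_p(d_p(G)))\subseteq d_p(t_p(G))$, which transports the hypothesis on $t_p(G)$ down to the subgroup $d_p(G)$ so that Corollary \ref{dptp} applies; and the crux of the ``only if'' part is the extraction of a Pr\"ufer subgroup from the nonzero divisible $p$-group $d_p(t_p(G))$. Once these two structural facts are in place, no further computation is needed. (Alternatively, one could bypass the two corollaries entirely and argue directly from Theorem \ref{s=0<->G=Per}(a), identifying $\Per(\mu_p\restriction_{d_p(G)})$ with the elements of $d_p(G)$ of finite order coprime to $p$; this route leads to the same two structural points.)
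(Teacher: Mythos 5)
Your proof is correct and follows essentially the same route as the paper's: both directions rest on the reduction $s(\mu_p)=s(\mu_p\restriction_{d_p(G)})$ via Proposition \ref{s-c} and Example \ref{sc-ex}(a), on Corollary \ref{dp-notin-t}, on extracting a copy of $\Z(p^\infty)$ and invoking Example \ref{utilissimo} together with Lemma \ref{subgroups}, and on Corollary \ref{dptp}. The only immaterial difference is in the ``if'' direction, where you apply Corollary \ref{dptp} directly to the torsion group $d_p(G)$ after the monotonicity check $d_p(t_p(d_p(G)))\subseteq d_p(t_p(G))=0$, whereas the paper applies it to $t(G)$ and concludes via $s(\mu_p\restriction_{d_p(G)})\leq s(\mu_p\restriction_{t(G)})=0$ by Lemma \ref{subgroups}.
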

\begin{proof}
If $d_p(G)\not\subseteq t(G)$, then $s(\mu_p)>0$ by Corollary \ref{dp-notin-t}. If $d_p(t_p(G))\not= 0$, then $t_p(G)$ contains a subgroup $H$ isomorphic to $\Z(p^\infty)$, and $s(\mu_p\restriction_H)>0$ by Example \ref{utilissimo}. By Lemma \ref{subgroups}, $s(\mu_p)>0$ as well.
If $d_p(G)\subseteq t(G)$, and $d_p(t_p(G))=0$, by Example \ref{sc-ex}(a), Lemma \ref{subgroups} and Corollary \ref{dptp}, $s(\mu_p)=s(\mu_p\restriction_{d_p(G)})=s(\mu_p\restriction_{t(G)})=0$.
\end{proof}

\begin{corollary}\label{sc-rev}
\begin{itemize}
\item[(a)] If $G$ is an abelian $p$-group, then $s(\mu_p)=0$ if and only if $G$ is reduced.
\item[(b)] If $G$ is a torsion-free abelian group, then $s(\mu_p) =0$ if and only if $p^\omega G = 0$. \hfill$\qed$
\end{itemize}
\end{corollary}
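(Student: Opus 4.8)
The plan is to deduce both statements directly from Theorem \ref{s(mup)=0}, which characterizes $s(\mu_p)=0$ by the conjunction of the two conditions $d_p(G)\subseteq t(G)$ and $d_p(t_p(G))=0$. In each of the two special cases one of these conditions degenerates, and the surviving one is then translated into the stated condition by means of the identifications of the surjective core already recorded in Example \ref{sc-ex}(b,c). So the whole argument is a specialization of Theorem \ref{s(mup)=0}.

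For part (a), I would start from the observation that an abelian $p$-group satisfies $G=t(G)=t_p(G)$. Hence the first condition $d_p(G)\subseteq t(G)$ holds trivially, and the second condition $d_p(t_p(G))=0$ reduces to $d_p(G)=0$. Since in a $p$-group $p$-divisibility coincides with divisibility, $d_p(G)=d(G)$ is the maximum divisible subgroup of $G$, as already noted in Example \ref{sc-ex}(b); thus $d_p(G)=0$ is exactly the assertion that $G$ is reduced. Combined with Theorem \ref{s(mup)=0}, this gives that $s(\mu_p)=0$ if and only if $G$ is reduced.

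For part (b), I would instead use that a torsion-free group has $t(G)=0$, so $t_p(G)=0$ and the second condition $d_p(t_p(G))=0$ is automatic, while the first condition $d_p(G)\subseteq t(G)=0$ becomes $d_p(G)=0$. By Example \ref{sc-ex}(c) the maximum $p$-divisible subgroup of a torsion-free group is precisely $p^\omega G$, so $d_p(G)=0$ is equivalent to $p^\omega G=0$. Theorem \ref{s(mup)=0} then yields that $s(\mu_p)=0$ if and only if $p^\omega G=0$.

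Since both items are a routine specialization of Theorem \ref{s(mup)=0} together with the surjective cores computed in Example \ref{sc-ex}, I do not anticipate any genuine obstacle. The only point that requires (minor) care is the identification of $d_p$ with $d$ in the $p$-group case and with $p^\omega G$ in the torsion-free case, and both identifications are supplied directly by Example \ref{sc-ex}(b) and (c) respectively.
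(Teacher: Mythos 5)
Your proposal is correct and follows exactly the route the paper intends: Corollary \ref{sc-rev} is stated with an immediate \qed precisely because it is the specialization of Theorem \ref{s(mup)=0} to $p$-groups and torsion-free groups, with the identifications $d_p(G)=d(G)$ and $d_p(G)=p^\omega G$ supplied by Example \ref{sc-ex}(b,c), which is what you carry out. Nothing is missing.
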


\begin{corollary}\label{s(mup)=0-cor}
Let $p$ be a prime, and let $G$ be a $p$-divisible abelian group. Then $s(\mu_p)=0$ if and only if $G$ is torsion and $t_p(G)=0$.
\end{corollary}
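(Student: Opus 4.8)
The plan is to deduce this directly from Theorem \ref{s(mup)=0}, which asserts that $s(\mu_p)=0$ if and only if $d_p(G)\subseteq t(G)$ and $d_p(t_p(G))=0$. The first thing I would note is that the hypothesis ``$G$ is $p$-divisible'' means exactly $pG=G$, i.e. $d_p(G)=G$, so the first of the two conditions in Theorem \ref{s(mup)=0} reads $G\subseteq t(G)$, which is precisely the requirement that $G$ be torsion. Thus the real content to extract is that, \emph{once $G$ is torsion}, the condition $d_p(t_p(G))=0$ collapses to $t_p(G)=0$.

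The key step, and the only point requiring a small argument, is to show that if $G$ is torsion and $p$-divisible, then the $p$-primary component $t_p(G)$ is itself $p$-divisible, whence $d_p(t_p(G))=t_p(G)$. For this I would use the primary decomposition $G=\bigoplus_q t_q(G)$. Since multiplication by $p$ preserves each primary component, $pG=G$ forces $p\,t_q(G)=t_q(G)$ for every prime $q$, and in particular $p\,t_p(G)=t_p(G)$; hence $t_p(G)$ is $p$-divisible and coincides with its own maximal $p$-divisible subgroup, so $d_p(t_p(G))=t_p(G)$. This is the main (though modest) obstacle: it is what lets one replace the a priori weaker condition $d_p(t_p(G))=0$ by the cleaner $t_p(G)=0$ under the torsion hypothesis.

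With these two observations the equivalence follows in both directions. For necessity, if $s(\mu_p)=0$ then Theorem \ref{s(mup)=0} gives $G=d_p(G)\subseteq t(G)$, so $G$ is torsion; the displayed computation then yields $0=d_p(t_p(G))=t_p(G)$. For sufficiency, if $G$ is torsion with $t_p(G)=0$, then $d_p(G)=G=t(G)\subseteq t(G)$ and $d_p(t_p(G))=d_p(0)=0$, so both hypotheses of Theorem \ref{s(mup)=0} hold and $s(\mu_p)=0$. No delicate string-theoretic estimate is needed here, since all the work has already been absorbed into Theorem \ref{s(mup)=0}; the corollary is essentially a specialization combined with the $p$-divisibility of the primary component.
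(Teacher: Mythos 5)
Your proof is correct and follows essentially the same route as the paper: both directions are read off from Theorem \ref{s(mup)=0} using $d_p(G)=G$, with the only real content being that for torsion $p$-divisible $G$ the primary decomposition forces $d_p(t_p(G))=t_p(G)$ (the paper phrases this by decomposing $d_p(G)$ as $d_p(t_p(G))\oplus\bigoplus_{q\neq p}t_q(G)$, you by noting $p\,t_p(G)=t_p(G)$, which is the same observation). Nothing is missing.
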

\begin{proof}
Assume that $s(\mu_p)=0$. By Theorem \ref{s(mup)=0}, this implies $d_p(G)\subseteq t(G)$ and $d_p(t_p(G))=0$.  Since $d_p(G)=G$, it follows that $G=t(G)=\bigoplus_{p}t_p(G)$, and also $G=d_p(G)=d_p(t_p(G))\oplus\bigoplus_{q\neq p}t_q(G)$. So $t_p(G)=d_p(t_p(G))=0$.
Suppose now that $G=t(G)$ and $t_p(G)=0$. By Theorem \ref{s(mup)=0}, $s(\mu_p)=0$.
\end{proof}

To complete the description of the values of the string numbers on $\mu_p$, the following lemma concerns torsion-free abelian groups.

\begin{lemma}\label{tfree->s=ns-mup}
If $G$ is a torsion-free abelian group, then $s_0(\mu_p)=0$ and $s(\mu_p)=ns(\mu_p)$.
\end{lemma}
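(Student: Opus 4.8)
The plan is to reduce both assertions to the single observation that $\mu_p$ is injective on a torsion-free group, after which the earlier results apply immediately. First I would note that if $G$ is torsion-free, then $px=0$ forces $x=0$, so $\mu_p$ is injective. This is the only structural fact about $G$ that the proof needs; everything else is a formal consequence of results already established.

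Given injectivity, the equality $s_0(\mu_p)=0$ follows directly from Corollary \ref{inj->s0=0}, which asserts that injective endomorphisms have no null strings. If one prefers a self-contained justification, I would instead compute the hyperkernel: since $\ker(\mu_p^n)=\{x\in G:p^nx=0\}=0$ for every $n$ by torsion-freeness, one has $\ker_\infty\mu_p=0$, and then Lemma \ref{null<->iper} gives $s_0(\mu_p)=s(\mu_p\restriction_{\ker_\infty\mu_p})=0$.

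For the second assertion $s(\mu_p)=ns(\mu_p)$, I would invoke Proposition \ref{uguaglianza iniettiva}, which states precisely that for an injective self-map the string number coincides with the non-singular string number; applying it to $\mu_p$ yields the claim. Alternatively, combining Theorem \ref{s=ns+s0}, namely $s(\mu_p)=ns(\mu_p)+s_0(\mu_p)$, with the already-proved vanishing $s_0(\mu_p)=0$ gives the same conclusion.

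There is no genuine obstacle in this argument: the statement is an easy specialization of the general machinery, and the only point requiring care is the initial remark that torsion-freeness is exactly what makes $\mu_p$ injective, which is what connects $G$ to the hypotheses of Corollary \ref{inj->s0=0} and Proposition \ref{uguaglianza iniettiva}.
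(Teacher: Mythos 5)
Your proof is correct, but your primary route differs from the paper's. The paper argues directly from torsion-freeness: any null string of $\mu_p$ lies in $t(G)=0$ (via Lemma \ref{null<->iper}, since $\ker_\infty\mu_p\subseteq t(G)$), which is impossible, so $s_0(\mu_p)=0$; it then obtains $s(\mu_p)=ns(\mu_p)$ by invoking the equality $s(\mu_p)=ns(\mu_p)+s_0(\mu_p)$ of Theorem \ref{s=ns+s0}. You instead isolate injectivity of $\mu_p$ as the single structural fact and deduce both claims from results about injective endomorphisms: Corollary \ref{inj->s0=0} for $s_0(\mu_p)=0$, and Proposition \ref{uguaglianza iniettiva} for $s(\mu_p)=ns(\mu_p)$. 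Your route via Proposition \ref{uguaglianza iniettiva} is lighter and in fact proves something stronger, namely that \emph{every} string of $\mu_p$ is non-singular, using only an elementary set-theoretic fact; by contrast, Theorem \ref{s=ns+s0} rests on the garland machinery of Propositions \ref{sing->null} and \ref{caso0}, so the paper's argument leans on heavier tools that happen to be already available at that point in the text. The paper's approach, on the other hand, makes the role of torsion explicit and fits the surrounding discussion of how the three string numbers interact. Your stated alternatives (the hyperkernel computation for $s_0$, and Theorem \ref{s=ns+s0} combined with $s_0(\mu_p)=0$) coincide essentially with the paper's proof, so your proposal subsumes it.
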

\begin{proof}
If $\mu_p$ admits a null string $S=\{x_n\}_{n\in\N}$, then $S\subseteq t(G)=0$. This shows that $s_0(\mu_p)=0$. By Theorem \ref{s=ns+s0}, $s(\mu_p)=ns(\mu_p)$.
\end{proof}

\begin{example}\label{J-ex}
Let $p$ and $q$ be distinct primes. Corollary \ref{sc-rev}(b) implies $s(\mu_p)=0$ on $\J_p$. Moreover, $ns(\mu_p)=s(\mu_p)=\infty$ and $s_0(\mu_p)=0$ for $\J_q$ in view of Lemma \ref{tfree->s=ns-mup}, Corollary \ref{s(mup)=0-cor} and Corollary \ref{inj->s0=0}. Finally, $s(\mu_p)=0$ for $\Z(q^\infty)$ by Corollary \ref{s(mup)=0-cor}.
\end{example}

The first part of the next example shows that the monotonicity of the null string number under taking induced endomorphisms on quotients does not hold. So in particular the Addition Theorem fails for the null string number.

\begin{example}\label{s0-non-monotone}
Let $p$ be a prime and consider $\mu_p:\Q\to\Q$. Then $s_0(\mu_p)=0$ by Example \ref{Q}. Consider now $\overline\mu_p:\Q/\Z\to\Q/\Z$ induced  by $\mu_p$, which is still the multiplication by $p$. Since $\Q/\Z\geq\Z(p^\infty)$, and $s_0(\overline\mu_p\restriction_{\Z(p^\infty)})=\infty$ by Example \ref{utilissimo}, it follows that $s_0(\overline\mu_p)=\infty$ by Lemma \ref{subgroups}.

Moreover, for $\Q/\Z$, $s(\mu_p)=\infty$ for the same reason as above. Furthermore, $ns(\mu_p)=0$, because $\Q/\Z=\bigoplus_p\Z(p^\infty)$, and $ns(\mu_p)=0$ for $\Z(p^\infty)$ by Example \ref{utilissimo}, while $ns(\mu_p)=0$ on $\Z(q^\infty)$ for every prime $q$ different from $p$ in view of Example \ref{J-ex}.
\end{example}

We summarize in the following table the values, calculated in this section, of the string numbers of $\mu_p$, for different abelian groups and $p$ a prime. To have $\infty$ in all three columns, it suffices to take $\Q\oplus\Z(p^\infty)$, for $p$ a prime, and apply Lemma \ref{subgroups}.

\begin{center}
\begin{tabular}{|c|ccc|}
\hline
$\mu_p$ & $s(-)$ & $ns(-)$ & $s_0(-)$ \\
\hline
$\Z$ & $0$ & $0$ & $0$ \\ 
$\Q$ & $\infty$ & $\infty$ & $0$ \\
$\Z(p^\infty)$ & $\infty$ & $0$ & $\infty$ \\
$\Z(q^\infty)$ & $0$ & $0$ & $0$ \\
$\Q/\Z$ & $\infty$ & $0$ & $\infty$ \\
$\J_p$ & $0$ & $0$ & $0$ \\
$\J_q$ & $\infty$ & $\infty$ & $0$ \\
\hline
\end{tabular}
\captionof{table}{values on the multiplication by a prime $p$}\label{table-p}
\end{center}

In the following example we calculate the string number of the Bernoulli shifts, since they are fundamental examples in algebraic, but also topological and ergodic entropy theory.

\begin{example}\label{Example:shift}
Let $K$ be a non-trivial abelian group.
\begin{itemize}
\item[(a)]First we prove that $s(\beta_K^\oplus)=0$ (and so $ns(\beta_K^\oplus)=s_0(\beta_K^\oplus)=0$).

In fact, for every $x\in K^{(\N)}$, $x\neq 0$, there exists $m\in\mathbb N_+$ such that $(\beta_K^\oplus)^{-m}(x)$ is empty and so $\beta_K^\oplus$ cannot have any string in $K^{(\N)}$.

For an alternative proof, note that $\sc(\beta_K^\oplus)=0$ by Example \ref{sc-bernoulli}, and hence $s(\beta_K^\oplus)=0$ by Proposition \ref{s-c}.

\item[(b)]We verify now that $s({}_K\beta^\oplus)=s_0({}_K\beta^\oplus)=\infty$ and $ns({}_K\beta^\oplus)=0$. 

Since ${}_K\beta^\oplus$ is surjective and non-injective, by Proposition \ref{Hopfs} it admits a null string $S$ and by Proposition \ref{caso0} $\mathcal G(S)$ is proper and each $S_k\in\mathcal G(S)$ is a null string. Then $s({}_K\beta^\oplus)=s_0({}_K\beta^\oplus)=\infty$. Moreover, for each $x\in K^{(\N)}$ there exists $n\in\N_+$ such that $({}_K\beta^\oplus)^n(x)=0$ and so every string of ${}_K\beta^\oplus$ is a null string. In particular, $ns({}_K\beta^\oplus)=0$.


\item[(c)]Finally, $s({}^t\!\beta_K^\oplus)=ns({}^t\!\beta_K^\oplus)=\infty$ and $s_0({}^t\!\beta_K^\oplus)=0$.

The group $K^{(\Z)}$ can be written also as $K^{(\Z)}=\bigoplus_{i\in\Z}K_i$ with $K_i=K$ for all $i\in\Z$. Let $x$ be a non-zero element of $K$ and for each $i\in\Z$ let $g_i\in \bigoplus_{i\in\N} K_i$ be such that $\supp(g_i)=\{i\}$ and the $i$-eth entry of $g_i$ is exactly $x$. Then $\overline\beta_K^\oplus(g_i)=g_{i+1}$ for every $i\in\Z$. Then $S=\{g_{-i}\}_{i\in\N}$ is a string of ${}^t\!\beta_K^\oplus$ in $\bigoplus_{i\in\Z} K_i$.
It is clear that $\mathcal G^*(S)=\{S^*_k\}_{k\in\N_+}$ is proper, looking at the supports of the elements of each $S_k^*$. 
Hence $s({}^t\!\beta_K^\oplus)=\infty$. Moreover, these strings are non-singular and so $s({}^t\!\beta_K^\oplus)=ns({}^t\!\beta_K^\oplus)=\infty$.
By Corollary \ref{inj->s0=0}, $s_0({}^t\!\beta_K^\oplus)=0$.
\end{itemize}
\end{example}

We conclude this series of examples with the calculation of the string number of a generalized shift.

\begin{example}
Let $\Gamma$ be a set, and $\lambda:\Gamma\to\Gamma$ a self-map.
Following \cite{GB}, an \emph{infinite orbit} of $\lambda$ in $\Gamma$ is an infinite sequence of distinct elements $A=\{a_n\}_{n\in\N}$ such that $\lambda(a_n)=a_{n+1}$ for every $n\in\N$. Moreover, $$o(\lambda)=\sup\{|\mathcal F|: \mathcal F\ \text{is a family of pairwise disjoint infinite orbits of $\lambda$ in $\Gamma$}\}.$$

\medskip
Assume that $\lambda$ is surjective and $\lambda^{-1}(i)$ is finite for every $i\in\Gamma$. Let $K$ be a non-trivial finite abelian group and consider the generalized shift $\sigma_\lambda^\oplus:K^{(\Gamma)}\to K^{(\Gamma)}$. Then 
$$s(\sigma_\lambda^\oplus)=\begin{cases}0 & \text{if and only if}\ o(\lambda)=0,\\ \infty & \text{if and only if}\ o(\lambda)>0.\end{cases}$$

To verify this result, first assume that $o(\lambda)>0$. Then there exists an infinite orbit $A=\{a_n\}_{n\in\N}$ of $\lambda$ in $\Gamma$. Define $B=A\cup\bigcup_{n\in\N_+}\lambda^{-n}(a_0)$; then $B\supseteq \lambda(B)\cup\lambda^{-1}(B)$. By \cite[Proposition 6.2]{GB} $K^{(B)}$ is $\sigma_\lambda^\oplus$-invariant and $\sigma_\lambda^\oplus\restriction_{K^{(B)}}=\sigma_{\lambda\restriction_B}$.
By Lemma \ref{subgroups} $s(\sigma_\lambda^\oplus)\geq s(\sigma_\lambda^\oplus\restriction_{K^{(B)}})=s(\sigma_{\lambda\restriction_B}).$ So it suffices to prove that $s(\sigma_{\lambda\restriction_B})$ is infinite. To this end,
let $x$ be a non-zero element of $K$. For every $n\in\N$ let $x_n$ be the element of $K^{(\Gamma)}$ such that $\supp(x_n)=\{a_n\}$ and the $a_n$-th entry of $x_n$ is exactly $x$. In particular the $x_n$'s are pairwise distinct elements of $G$. Since $A$ is $\lambda$-invariant, it is possible to consider $\lambda\restriction_A:A\to A$, which is injective. Therefore $\sigma_{\lambda\restriction_A}(x_n)=x_{n-1}$ for every $n\in\N_+$. Consequently $S=\{x_n\}_{n\in\N}$ is a string of $\sigma_{\lambda\restriction_A}$. Then $\mathcal G^*(S)$ is proper: it suffices to look at the supports of the elements of the $S_k\in\mathcal G^*(S)$. In particular, $s(\lambda\restriction_A)=\infty$, and so by Lemma \ref{subgroups} also $s(\lambda)=\infty$.
 
\smallskip
Suppose now that $s(\sigma_\lambda^\oplus)>0$ and let $S=\{x_n\}_{n\in\N}$ be a string of $\sigma_\lambda^\oplus$. Let $F=\supp(x_0)$. By \cite[Claim 4.2(a)]{AADGH} $\supp((\sigma_\lambda^\oplus)^n(x_n))=\lambda^{-n}(\supp(x_n))$ for every $n\in\N$. But $(\sigma_\lambda^\oplus)^n(x_n)=x_0$ and $\lambda$ is surjective; then $\lambda^n(F)=\supp(x_n)$ for every $n\in\N$. If there exists $m\in\N_+$ such that $\lambda^m(F)\subseteq F\cup\ldots \cup\lambda^{m-1}(F)$, then $x_n\in K^{F\cup\ldots \cup\lambda^{m-1}(F)}$ for every $n\in\N$, and so $S\subseteq K^{F\cup\ldots \cup\lambda^{m-1}(F)}$ which is not possible because $S$ is infinite while $K^{F\cup\ldots \cup\lambda^{m-1}(F)}$ is finite.
Consequently $\lambda^n(F)\not\subseteq F\cup\ldots \cup\lambda^{n-1}(F)$ for every $n\in\N$. It follows that $L=\bigcup_{n=0}^\infty \lambda^n(F)$ is infinite. But $L=\bigcup_{x\in F}\bigcup_{n=0}^\infty\left\{x,\sigma_\lambda^\oplus(x),\ldots,(\sigma_\lambda^\oplus)^n(x)\right\}$, and $F$ is finite, hence there exists $x\in F$ such that $A=\bigcup_{n=0}^\infty\{x,\lambda(x),\ldots,\lambda^n(x)\}=O(x)$ is infinite; in particular the elements of $A$ are all distinct, so that $A$ is an infinite orbit of $\lambda$, that is, $o(\lambda)>0$.
\end{example}

\section{Theorems A and A$^*$}\label{proof}

\subsection{The case of free abelian groups}

We can generalize Example \ref{CorZan} in the following way:

\begin{theorem}\label{generalizzazione}
Let $\{p_n\}_{n\in\N}$ be the sequence of all primes in increasing order. Let $G$ be a torsion-free abelian group such that $\bigcap_{m\in\N} \bigcup_{n=m}^\infty p_n G =0$. If $\f\in\End(G)$ admits a string $S$, then there exists an infinite sequence of pairwise distinct primes $\{q_k\}_{k\in\N}$ such that $\mathcal F_{(q_k)}(S)$ is proper.
\end{theorem}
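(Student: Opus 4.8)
The plan is to keep the given string $S=\{x_n\}_{n\in\N}$ fixed and produce the primes by a purely arithmetic argument, exploiting two features of the setting: multiplication by a non-zero integer is injective on the torsion-free group $G$, and the hypothesis forces each non-zero element to be divisible by only finitely many primes. First I would record the easy reductions. Since $G$ is torsion-free, for every integer $a\neq0$ the map $x\mapsto ax$ is injective, so $aS=\{ax_n\}_{n\in\N}$ is a pseudostring of $\f$ (by the lemma recalled just before the notion of garland) whose terms stay pairwise distinct, hence a string. Thus every $q_kS$ will automatically be a string, and the whole problem reduces to arranging pairwise \emph{disjointness} of the $q_kS$. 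I would also note that the relevant case is $x_0\neq0$, equivalently $0\notin S$: if $x_m=0$ for some $m$ then $x_0=\f^m(x_m)=0$, so $0$ can enter $S$ only through $x_0$, and if $x_0=0$ then every $q_kS$ contains $0$ and no fan is proper; so I assume $x_0\neq0$ henceforth.

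The heart of the argument is a divisibility obstruction to collisions. By Lemma \ref{disgiunte}, for distinct primes $q_k\neq q_j$ the strings $q_kS$ and $q_jS$ are disjoint precisely when $q_kx_0\notin q_jS$ and $q_jx_0\notin q_kS$. Suppose then that $q_kx_0=q_jx_n$ for some $n\in\N$. Writing a B\'ezout relation $aq_k+bq_j=1$ and substituting $q_kx_0=q_jx_n$, I obtain
$$x_0=(aq_k+bq_j)x_0=a\,q_jx_n+b\,q_jx_0=q_j(ax_n+bx_0)\in q_jG.$$
Hence a collision $q_kx_0=q_jx_n$ can occur only if $x_0\in q_jG$, and symmetrically $q_jx_0\in q_kS$ forces $x_0\in q_kG$. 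This is exactly the place where the torsion-freeness and the coprimality of the two primes are used, and it is the only step requiring a calculation.

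Finally I would invoke the hypothesis. The set $P_0=\{q\text{ prime}:x_0\in qG\}$ is the set of primes dividing $x_0$, and since $x_0\neq0$ the condition $\bigcap_{m}\bigcup_{n=m}^\infty p_nG=0$ says precisely that no non-zero element is divisible by arbitrarily large (hence by infinitely many) primes; so $x_0$ is divisible by only finitely many primes, i.e.\ $P_0$ is finite. I then choose $\{q_k\}_{k\in\N}$ to be any infinite sequence of pairwise distinct primes avoiding $P_0$, which is possible as $P_0$ is finite. For $k\neq j$, a collision $q_kx_0=q_jx_n$ would force $q_j\in P_0$ and a collision $q_jx_0=q_kx_m$ would force $q_k\in P_0$, both excluded; by Lemma \ref{disgiunte} the $q_kS$ are pairwise disjoint strings, so $\mathcal F_{(q_k)}(S)$ is proper.

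In summary, the main obstacle is the B\'ezout computation converting ``$q_kS$ and $q_jS$ meet'' into ``$x_0$ is divisible by one of the two primes''; once this obstruction is isolated, the hypothesis is tailor-made to bound the finitely many bad primes for $x_0$, and choosing primes outside that finite set finishes the proof. The degenerate situation $x_0=0$ (equivalently $0\in S$) is the only edge case needing a word of care, and it falls outside the scope of the statement.
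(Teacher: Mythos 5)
Your proof is correct and follows essentially the same route as the paper's: the paper first derives from the hypothesis that for each nonzero $x$ there is an infinite sequence of distinct primes $\{q_k\}_{k\in\N}$ with $q_l x\notin q_k G$ for $l\neq k$ (the same coprimality/B\'ezout computation you isolate), applies this to $x=x_0$, and concludes pairwise disjointness via Lemma \ref{disgiunte}, exactly as you do by picking primes outside the finite set $P_0$ of primes dividing $x_0$. Your flagging of the degenerate case $x_0=0$ is if anything more careful than the original, since the paper's proof also tacitly assumes $x_0\neq 0$ (its claim is stated only for nonzero elements and is applied to $x=x_0$), and a string with $x_0=0$ would indeed make the conclusion fail as literally stated.
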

\begin{proof} 
First we show that the hypothesis implies that
\begin{itemize}
\item[($*$)] for every $x\in G\setminus\{0\}$, there exists an infinite sequence of pairwise distinct primes $\{q_k\}_{k\in\N}$, such that $q_l x \notin q_k G$ for all $l\ne k$ in $\N$.
\end{itemize}
Let $x \in G\setminus\{0\}$. Then $x\in G\setminus \bigcap_{m\in\N} \bigcup_{n=m}^\infty p_n G=\bigcup_{m\in\N} \bigcap_{n=m}^\infty (G\setminus p_n G)$, and so there exists $m\in\N$ such that $x\not\in p_n G$ for every $n\geq m$. For $k\in\N$ let $q_k=p_{k+m}$. Then for $l\neq k$ in $\N$ one has $q_l x \notin q_k G$; indeed, since $(q_l,q_k) = 1$, $q_m x \in q_k G$ would imply $x \in q_k G$. Then $\{q_k\}_{k\in\N}$ is the sequence required in ($*$).

Let $S=\{x_n\}_{n\in\N}$ be a string of $\f$, and let $\{q_k\}_{k\in\N}$ be the infinite sequence of pairwise distinct primes given by ($*$) for $x=x_0$. 
Consider $\mathcal F_{(q_k)}(S)=\{q_k S\}_{k\in\N}$. Since $G$ is torsion-free, each $q_k S$ is a string of $\f$. Moreover, the $S_k$'s are pairwise disjoint because $q_m x_0 \notin q_k G$ for all $m\ne k$ in $\N$ and hence Lemma \ref{disgiunte} applies. 
\end{proof}

The following corollary is a consequence of Theorem \ref{generalizzazione} and it shows in particular that an endomorphism $\f$ of a free abelian group admits a string precisely when it admits infinitely many strings, i.e., $s(\f)>0$ implies $s(\f)=\infty$.

\begin{corollary}\label{senza torsione}\label{free}
Let $G$ be a free abelian group and $\f\in\End(G)$. If there exists a string $S=\left\{x_n\right\}_{n\in\N}$ of $\f$, then there exists an infinite sequence of distinct primes $\{p_k\}_{k\in\N}$ such that $\mathcal F_{(p_k)}(S)$ is proper.
\end{corollary}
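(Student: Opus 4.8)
The plan is to obtain this corollary as an immediate application of Theorem \ref{generalizzazione}. Since a free abelian group is trivially torsion-free, the only hypothesis of that theorem requiring verification is the divisibility condition $\bigcap_{m\in\N}\bigcup_{n=m}^\infty p_n G = 0$, where $\{p_n\}_{n\in\N}$ enumerates the primes in increasing order. Once this is checked, Theorem \ref{generalizzazione} applies verbatim to any string $S$ of $\f$ and produces the required infinite sequence of distinct primes $\{p_k\}_{k\in\N}$ together with the properness of $\mathcal F_{(p_k)}(S)$.

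First I would unwind the intersection. An element $g$ lies in $\bigcup_{n=m}^\infty p_n G$ iff $g\in p_n G$ for some $n\geq m$; hence $g$ belongs to $\bigcap_{m\in\N}\bigcup_{n=m}^\infty p_n G$ iff $g\in p_n G$ for infinitely many indices $n$, i.e.\ iff $g$ is divisible by infinitely many distinct primes in $G$. The next step is to show that no nonzero element of a free abelian group has this property. Writing $G=\Z^{(I)}$ and fixing $g\neq 0$, choose a coordinate $i$ with $g_i\neq 0$. If $g\in pG$ for a prime $p$, then in particular $p\mid g_i$; since $g_i$ is a nonzero integer, it admits only finitely many prime divisors, so only finitely many primes $p$ satisfy $g\in pG$. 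Thus $g$ cannot lie in the intersection, and the intersection equals $0$.

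With both hypotheses of Theorem \ref{generalizzazione} in place, the conclusion is immediate. I do not expect a genuine obstacle here: the whole content is the reduction of the intersection to the phrase ``divisible by infinitely many primes'', after which one invokes the elementary fact that a nonzero integer has only finitely many prime factors. The mildest point of care is simply that $\bigcup_{n=m}^\infty p_n G$ is a union of subgroups rather than a subgroup, but this is irrelevant since the argument is purely set-theoretic at the level of elements.
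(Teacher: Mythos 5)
Your proposal is correct and follows essentially the same route as the paper's own proof: both verify the hypothesis $\bigcap_{m\in\N}\bigcup_{n=m}^\infty p_n G=0$ of Theorem \ref{generalizzazione} by fixing a nonzero element, looking at a nonzero coordinate (equivalently, a nonzero coefficient in a basis expansion), and invoking the fact that a nonzero integer has only finitely many prime divisors. The only cosmetic difference is that you make the reduction of the intersection to ``divisible by infinitely many primes'' explicit, which the paper leaves implicit.
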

\begin{proof}
We verify that $G$ satisfes the hypothesis of Theorem \ref{generalizzazione}. Fix a basis $B$ of $G$ and $x\in G\setminus\{0\}$. Then  $x = \sum_i k_i b_i$, for some $b_i \in B$ and $k_i\in\Z\setminus\{0\}$. 
Hence, $x\in pG$  for some prime $p$ if and only if $p|k_i$ for all $i$. This proves that $x \in pG$ only for finitely many primes $p$, and so the hypothesis of Theorem \ref{generalizzazione} is satisfied. 
\end{proof}

\subsection{The string number of an injective endomorphism}

In this section we prove Theorem A for injective group endomorphisms in Proposition \ref{iniettiva}. We start with two technical results, which will be applied in its proof. 


\begin{lemma}\label{f.g.}
Let $G$ be an infinite finitely generated abelian group, $\f\in\End(G)$ injective, and $S=\{x_n\}_{n\in\N}$ a string of $\f$. Then there exists an infinite sequence $\{a_k\}_{k\in\N}$ of pairwise distinct natural numbers such that $\mathcal F_{(a_k)}(S)$ is proper.
\end{lemma}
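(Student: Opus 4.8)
The plan is to reduce everything to the free abelian case already settled in Corollary~\ref{free}, by passing to the quotient of $G$ by its torsion subgroup. Since $G$ is finitely generated, its torsion subgroup $T=t(G)$ is finite and $G/T$ is free abelian of finite rank; as $G$ is infinite, $G/T\neq 0$. Write $\pi\colon G\to G/T$ for the canonical projection and $\overline\f\colon G/T\to G/T$ for the induced endomorphism, which is well defined because $\f(T)\subseteq T$.

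First I would extract two consequences of the injectivity of $\f$. Since $\f\restriction_T\colon T\to T$ is an injective endomorphism of the finite group $T$, it is an automorphism, so $\f(T)=T$. From this $\overline\f$ is injective: if $\overline\f(x+T)=T$ then $\f(x)\in T=\f(T)$, so $\f(x)=\f(t)$ for some $t\in T$, whence $x=t\in T$ by injectivity of $\f$, i.e.\ $x+T=T$. Next, consider $\pi(S)=\{\pi(x_n)\}_{n\in\N}$, which is a pseudostring of $\overline\f$. The key observation is that $\pi(S)$ is infinite: the fibres of $\pi$ are cosets of the finite subgroup $T$, hence finite, so if $\pi(S)$ were finite then $S$ would be covered by finitely many finite fibres and thus finite, contradicting that the $x_n$ are pairwise distinct. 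Being an infinite pseudostring of the injective map $\overline\f$, $\pi(S)$ is then a string of $\overline\f$ by Lemma~\ref{infinite->string}.

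Now I would apply Corollary~\ref{free} to the (nonzero) free abelian group $G/T$, the endomorphism $\overline\f$, and the string $\pi(S)$: this yields an infinite sequence of pairwise distinct primes $\{p_k\}_{k\in\N}$ such that $\mathcal F_{(p_k)}(\pi(S))$ is proper. By Lemma~\ref{lemmaX}(c), using that $S$ itself is a string and that $\pi(\mathcal F_{(p_k)}(S))=\mathcal F_{(p_k)}(\pi(S))$, properness of $\mathcal F_{(p_k)}(\pi(S))$ forces $\mathcal F_{(p_k)}(S)$ to be proper. Setting $a_k=p_k$ gives the desired sequence of pairwise distinct natural numbers.

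The only genuine content lies in turning $\pi(S)$ into a bona fide string of $\overline\f$ rather than a mere pseudostring containing one; this is exactly where the two uses of the hypotheses come together, namely the finiteness of $T$ (which makes the fibres of $\pi$ finite and thereby forces $\pi(S)$ to be infinite) and the injectivity of $\overline\f$ (which upgrades ``infinite pseudostring'' to ``string'' via Lemma~\ref{infinite->string}). Once this is in place the passage through $G/T$ and the invocation of Corollary~\ref{free} and Lemma~\ref{lemmaX} are routine.
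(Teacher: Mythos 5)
Your proof is correct, but it reaches Corollary~\ref{free} by a genuinely different reduction than the paper's. The paper stays inside $G$: writing $G=F\oplus t(G)$ and choosing $m\in\N_+$ with $m\,t(G)=0$, it passes to the $\f$-invariant \emph{free subgroup} $mG=mF$ and shows that $mS$ is a string there (if it were not, injectivity of $\f$ and Lemma~\ref{infinite->string} would make $mS$ finite, and then $S$ would embed into the finite set $t(G)\times\{c_n\}_{n\in\N}$, where the $c_n$ are the free components of the $x_n$ --- a contradiction); Corollary~\ref{free} then yields primes $\{q_k\}_{k\in\N}$ with $\mathcal F_{(q_k)}(mS)$ proper, and the paper concludes with coefficients $a_k=q_k m$ via the definitional identity $\mathcal F_{(q_k m)}(S)=\mathcal F_{(q_k)}(mS)$, with no transfer lemma needed. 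You instead pass to the \emph{free quotient} $G/t(G)$; the genuine content of your route is that $\f(t(G))=t(G)$ (an injective endomorphism of a finite group is an automorphism), hence $\overline\f$ is injective --- a point that fails for quotients in general, as $\mu_p$ on $\Z$ inducing the zero map on $\Z/p\Z$ shows, so the finiteness of $t(G)$ is doing real work --- and that $\pi(S)$ is infinite because the fibres of $\pi$ are cosets of the finite group $t(G)$; then Lemma~\ref{infinite->string} upgrades $\pi(S)$ to a string, Corollary~\ref{free} applies in $G/t(G)$, and Lemma~\ref{lemmaX}(c) pulls properness back to $S$. Your approach buys prime coefficients $a_k=p_k$ and mirrors the quotient technique the paper itself uses in proving Theorem~A$^*$ (there with $\ker_\infty\f$ in place of $t(G)$); the paper's approach avoids having to verify injectivity of an induced map and transfers the fan by a direct equality rather than through Lemma~\ref{lemmaX}.
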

\begin{proof}
Since $G$ is finitely generated, $G=F\oplus t(G)$ where $F$ is a free abelian group and $t(G)$ is finite. One can write in a unique way $x_n=t_n+c_n$ with $t_n\in t(G)$ and $c_n\in F$. Since $G$ is finitely generated, there exists $m\in\N_+$ such that $mt(G)=0$, and so $mG=mF$, which is a $\f$-invariant subgroup of $G$.
Then $mS=\{mx_n\}_{n\in\N}=\{mc_n\}_{n\in\N}$, because $mt_n=0$ for every $n\in\N$. We verify that $mS$ is a string. In fact, assume that $mS$ is not a string. By Lemma \ref{infinite->string} $mS$ is finite. Since $mc_a=mc_b$ implies $c_a=c_b$, because $F$ is free, also $\{c_n\}_{n\in\N}$ is finite. So we have the injection $S\hookrightarrow t(G)\times \{c_n\}_{n\in\N}$ of $S$ in a finite set, a contradiction. This proves that $mS$ is a string of $\f\restriction_{mG}$ in the free abelian group $mG$. Now apply Corollary \ref{free} to find an infinite sequence of pairwise distinct primes $\{q_k\}_{k\in\N}$ such that $\mathcal F_{(q_k)}(kS)$ is a proper fan of $\f\restriction_{mG}$. Let $a_k=q_k m$ for every $k\in\N_+$. Then $\mathcal F_{(a_k)}(S)=\mathcal F_{(q_k)}(mS)$, and so this is a proper fan of $\f$.
\end{proof}

The next proposition is a powerful tool which applies in the proof of Proposition \ref{iniettiva}.

\begin{proposition}\label{potente}
Let $G$ be an abelian group, $\f\in\End(G)$ injective, and $S=\{x_n\}_{n\in\N}$ a string of $\f$. If there exist $k<h$ in $\N_+$ such that 
\begin{equation}\label{perla}
x_h\in\pm x_0+\left\langle x_1,\ldots,x_k\right\rangle,
\end{equation}
then there exists an infinite sequence of pairwise distinct integers $\{a_k\}_{k\in\N}$ such that $\mathcal F_{(a_k)}(S)$ is proper.
\end{proposition}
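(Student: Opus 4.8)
The plan is to use the defining relation \eqref{perla} to force the whole string into a \emph{finitely generated} subgroup, and then to quote Lemma \ref{f.g.}. Write the hypothesis as $x_h=\epsilon x_0+\sum_{i=1}^k c_i x_i$ with $\epsilon\in\{+1,-1\}$ and $c_i\in\Z$, and extend the string to negative indices by $x_{-n}=\f^n(x_0)$ for $n\in\N$ (this is legitimate, since $\f(x_{-n})=x_{-n-1}$). The sign being exactly $\pm1$ will be crucial at the end, because it makes $\epsilon$ invertible in $\Z$.

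The first step is to propagate this single relation to every index. Applying $\f$ to \eqref{perla} repeatedly and using $\f(x_n)=x_{n-1}$ immediately yields the same relation with all indices lowered, hence for every $j\le 0$. To raise the indices one uses \emph{injectivity}: from $\f(x_{h+1})=x_h=\f\bigl(\epsilon x_1+\sum_{i=1}^k c_i x_{i+1}\bigr)$ and injectivity of $\f$ one deduces $x_{h+1}=\epsilon x_1+\sum_{i=1}^k c_i x_{i+1}$, and inductively one obtains, for every $j\in\Z$, the relation
\[
x_{h+j}=\epsilon x_j+\sum_{i=1}^k c_i x_{i+j}.
\]

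The second, and central, step is to show that $B:=\langle x_0,x_1,\dots,x_{h-1}\rangle$ is $\f$-invariant and contains the entire two-sided string. Since $k<h$, the displayed relation expresses $x_{h+j}$ through the strictly smaller indices $j,\dots,j+k$, so an upward induction gives $x_n\in B$ for all $n\ge 0$. For the negative indices the sign condition enters: because $\epsilon=\pm 1$ is invertible, the relation solves for $x_j$,
\[
x_j=\epsilon\Bigl(x_{h+j}-\sum_{i=1}^k c_i x_{i+j}\Bigr)\in\langle x_{j+1},\dots,x_{j+h}\rangle,
\]
and a downward induction starting from $x_{-1}\in\langle x_0,\dots,x_{h-1}\rangle=B$ pushes every $x_n$ with $n<0$ into $B$. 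Thus $\langle x_n:n\in\Z\rangle=B$ is finitely generated; it is $\f$-invariant since $\f(x_n)=x_{n-1}\in B$ for every $n\in\Z$, and it is infinite because the $x_n$ are pairwise distinct.

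Finally, $\f\restriction_B$ is an injective endomorphism of the infinite finitely generated abelian group $B$, and $S$ is a string of $\f\restriction_B$. Lemma \ref{f.g.} then supplies an infinite sequence $\{a_k\}_{k\in\N}$ of pairwise distinct natural numbers for which $\mathcal F_{(a_k)}(S)$ is a proper fan of $\f\restriction_B$; since being a string and being pairwise disjoint are intrinsic conditions on the elements $a_k x_n$, the fan $\mathcal F_{(a_k)}(S)$ is equally proper as a fan of $\f$, as required. I expect the main obstacle to be exactly the second step: realizing that the relation must be carried in \emph{both} directions — upward by injectivity, downward by applying $\f$ — and recognizing that the coefficient $\pm1$ in \eqref{perla} is precisely what lets the relation be inverted so as to collapse the group generated by the string to the finitely generated $B$, thereby reducing to Lemma \ref{f.g.}.
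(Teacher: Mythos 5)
Your proposal is correct and takes essentially the same route as the paper's proof: both arguments show that the string lies in a finitely generated $\f$-invariant subgroup (your $B=\langle x_0,\ldots,x_{h-1}\rangle$ coincides with the paper's $\langle S\rangle$), using injectivity of $\f$ to push the relation \eqref{perla} up the string and the invertibility of the coefficient $\pm 1$ to handle $x_0$ (in your version, the negative indices), and then both conclude by Lemma \ref{f.g.}. The only difference is organizational: the paper proves $\f$-invariance of $\langle S\rangle$ in one step and finite generation by induction, whereas you first propagate the relation to a two-sided extension of the string.
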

\begin{proof}
We prove that the subgroup $H=\langle S\rangle$ is $\f$-invariant and finitely generated. Since $S$ is infinite, Lemma \ref{f.g.} can be applied to $H$ and $\f\restriction_H$ to conclude the proof.  

To prove that $H$ is $\f$-invariant it suffices to check that $\f(x_0)\in\langle S\rangle$. This follows immediately from that fact that $x_0=\pm x_h+a_1x_1+\ldots+a_kx_k$ for some integers $a_1,\ldots,a_k$, according to our hypothesis. Now we show that $S\subseteq \langle x_0,\ldots,x_h\rangle$, and this implies that $H=\langle S\rangle$ is finitely generated. To this end we prove by induction that $x_n\in\langle x_0,\ldots,x_h\rangle$ for every $n\geq h$. For $n=h$ this is obvious. Assume now that $n>h$ and $\f(x_n)=x_{n-1}\in\langle x_0,\ldots,x_h\rangle$. Then \eqref{perla} yields 
$$\f(x_n)\in\langle x_0,\ldots,x_{h-1}\rangle=\langle \f(x_1),\ldots,\f(x_h)\rangle=\f(\langle x_1,\ldots,x_h\rangle).$$ 
By the injectivity of $\f$ we conclude that $x_n\in\langle x_0,\ldots,x_h\rangle$. 
\end{proof}

The next proposition proves, as a byproduct, the equivalence between (i) and (ii) of Theorem A in case the group endomorphism is injective. In more details, given a string of an injective group endomorphism, it states that at least one between the garland or a fan is proper; in both cases, from one string it produces infinitely many pairwise disjoint strings.

\begin{proposition}\label{iniettiva}
Let $G$ be an infinite abelian group, $\f\in \End(G)$ injective, and $S=\left\{x_n\right\}_{n\in\N}$ a string of $\f$. Then
\begin{itemize}
\item[(i)] either $\mathcal{G}(S)$ is proper, or
\item[(ii)] there exists an infinite sequence $\{a_k\}_{k\in\N}$ of pairwise distinct natural numbers such that $\mathcal{F}_{(a_k)}(S)$ is proper.
\end{itemize}
\end{proposition}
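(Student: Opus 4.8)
The plan is to exploit the two constructions already available (the garland $\mathcal G(S)$ and the fans $\mathcal F_{(a_k)}(S)$) and to split the argument according to whether or not the string $S$ satisfies a relation of the type in Proposition \ref{potente}. The underlying dichotomy I would set up is this: either the terms of $S$ stay ``linearly independent enough'' that the garland $\mathcal G(S)=\{S_k\}_{k\in\N_+}$ consists of pairwise disjoint strings, giving alternative (i); or some nontrivial relation $x_h\in\pm x_0+\langle x_1,\dots,x_k\rangle$ occurs, and then Proposition \ref{potente} directly produces a proper fan, giving alternative (ii).

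First I would try to prove that $\mathcal G(S)$ is proper \emph{assuming no relation of the form \eqref{perla} holds}. Recall $S_k=\{x_n+x_{n+k}\}_{n\in\N}$. To see each $S_k$ is a string I would suppose $x_n+x_{n+k}=x_m+x_{m+k}$ for $n<m$, apply a suitable power of $\f$ to push indices down, and use injectivity of $\f$ together with the absence of relation \eqref{perla} to derive a contradiction. For pairwise disjointness I would invoke Lemma \ref{disgiunte}: it suffices to show $x_0+x_k\notin S_j$ for $j\neq k$, i.e. that $x_0+x_k=x_n+x_{n+j}$ has no solution with $n\in\N$; again applying powers of $\f$ and using injectivity reduces this to a relation among the $x_i$'s that the failure of \eqref{perla} forbids.

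The \textbf{main obstacle} will be pinning down exactly which equalities among sums $x_n+x_{n+k}$ force a forbidden relation of the form \eqref{perla}, and organizing the index bookkeeping so that applying $\f^{\,r}$ (for an appropriate $r$) cleanly lands every term in the span $\pm x_0+\langle x_1,\dots,x_k\rangle$. The subtlety is that injectivity lets one \emph{cancel} $\f$ on both sides and shift indices, but a relation like $x_n+x_{n+k}=x_m+x_{m+k}$ naturally produces, after applying $\f^{n}$ (say), an equation involving $x_0$, $x_{k}$, $x_{m-n}$ and $x_{m-n+k}$; I must check that such an equation, when genuinely nontrivial, can be rewritten in the shape \eqref{perla} for some $k<h$. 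I expect the cleanest route is to argue contrapositively: if $\mathcal G(S)$ is \emph{not} proper — either some $S_k$ fails to be a string, or two members meet — then the witnessing equality, after reducing indices via injectivity, yields precisely a relation expressing some $x_h$ in $\pm x_0+\langle x_1,\dots,x_k\rangle$, so Proposition \ref{potente} applies and (ii) holds.

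Thus the proof structure I would write is a clean case split: \emph{if} $S$ satisfies a relation \eqref{perla} for some $k<h$, invoke Proposition \ref{potente} to obtain a proper fan, establishing (ii); \emph{otherwise}, show by the cancellation-and-index-shift arguments above that every $S_k$ is a string and that the $S_k$ are pairwise disjoint via Lemma \ref{disgiunte}, establishing that $\mathcal G(S)$ is proper, i.e. (i). This matches the phrasing of the statement (``either \dots\ or \dots''), and the two halves draw on exactly the two tools the section has built — Proposition \ref{potente} for the fan and the garland machinery with Lemma \ref{disgiunte} for the disjointness of $\mathcal G(S)$.
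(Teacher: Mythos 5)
Your proposal is correct and takes essentially the same route as the paper: the published proof is precisely the contrapositive organization you settle on, namely a case analysis showing that if some $S_k\in\mathcal{G}(S)$ fails to be a string, or two members of $\mathcal{G}(S)$ meet (detected via Lemma \ref{disgiunte}), then applying a suitable power of $\f$ to shift indices produces a relation of type \eqref{perla}, whence Proposition \ref{potente} yields a proper fan and (ii) holds, while otherwise $\mathcal{G}(S)$ is proper and (i) holds. The index bookkeeping you flag as the main obstacle works out exactly as you predict, including the one asymmetric subcase $x_0+x_m=x_j+x_{j+l}$ with $j+l<m$, which is where the $-x_0$ sign in \eqref{perla} is needed.
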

\begin{proof}
Consider the garland $\mathcal G(S)=\{S_k\}_{k\in\N}$.

\smallskip
\textsc{Case 1.} Assume that $S_k$ is not a string for some $k\in\N$. Then $x_j+x_{j+k}=x_{j+a}+x_{j+a+k}$ for some $j\in \N$, $a\in\N_+$. Applying $\f^j$ we find $x_0+x_k=x_a+x_{a+k}$. In particular $x_{a+k}\in x_0+\langle x_1,\ldots, x_{a+k-1}\rangle$ and so Proposition \ref{potente} gives (ii).

\smallskip
\textsc{Case 2.} Suppose that there exist $l<m$ in $\N$ such that $S_l$ and $S_m$ have non-trivial intersection. As $x_l\ne x_m$, one has $x_0+x_l\ne x_0+x_m$. So we have two cases.

(a) If $x_0+x_l=x_j+x_{j+m}$ with $j>0$, then, since $m>l$, we get $x_{j+m}\in x_0+\langle x_1,\ldots, x_{j+m-1}\rangle$, and so Proposition \ref{potente} yields (ii).

(b) If $x_0+x_m=x_j+x_{j+l}$ with $j>0$, then $j+l\neq m$, otherwise $x_0=x_j$, a contradiction. 
If $j+l>m$, then $x_{j+l}\in x_0+\langle x_1,\ldots,x_{j+l-1}\rangle$;
if $j+l<m$, then $x_m\in -x_0+\langle x_1,\ldots,x_{m-1}\rangle$.
In both these cases Proposition \ref{potente} gives again (ii).

\smallskip
\textsc{Case 3.} If neither Case 1 nor Case 2 occur, then we have (i).
\end{proof}

Now we prove Theorem A and Theorem A$^*$.

\begin{proof}[\bf Proof of Theorem A$^*$]
(i)$\Rightarrow$(ii) Let $S=\{x_n\}_{n\in\N}$ be a non-singular string of $\f$ in $G$. Consider $\overline \f:G/\ker_\infty\f\to G/\ker_\infty\f$. By Lemma \ref{quoinj} $\overline\f$ is injective. So we can apply Proposition \ref{iniettiva}. If $\mathcal G(S+\ker_\infty\f)$ is proper then $\mathcal G(S)$ is proper too by Lemma \ref{lemmaX}(a). Otherwise there exists an infinite sequence of pairwise distinct integers $\{a_k\}_{k\in\N}$ such that $\mathcal F_{(a_k)}(S+\ker_\infty\f)$ is proper. Then $\mathcal F_{(a_k)}(S)$ is proper as well by Lemma \ref{lemmaX}(c).

\smallskip
(ii)$\Rightarrow$(i) is trivial and (i)$\Leftrightarrow$(iii) is Theorem \ref{s=0<->G=Per}(a).
\end{proof}

\begin{proof}[\bf Proof of Theorem A]
(i)$\Rightarrow$(ii) Since $s(\f)>0$, by Theorem \ref{s=ns+s0} either $ns(\f)>0$ or $s_0(\f)>0$. If $ns(\f)>0$, then $ns(\f)=\infty$ by Theorem A$^*$. If $s_0(\f)>0$, then $s_0(\f)=\infty$ by Proposition \ref{caso0}.

\smallskip
(ii)$\Rightarrow$(i) is trivial and (i)$\Leftrightarrow$(iii) is Theorem \ref{s=0<->G=Per}(b).
\end{proof}

\subsection{Consequences of Theorems A, A$^*$ and A$^{**}$}

The following corollary of Theorems A, A$^*$ and A$^{**}$ is a logarithmic law for the string numbers. This is a typical property of the algebraic entropy (see Fact \ref{ent}(B)).

\begin{corollary}\label{s>s*}
Let $G$ be an abelian group and $\f\in\End(G)$. Then $s(\f^k)=k s(\f)$, $ns(\f^k)=k ns(\f)$ and $s_0(\f^k)=k s_0(\f)$ for every $k\in\N_+$.\hfill$\qed$
\end{corollary}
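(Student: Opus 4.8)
The plan is to cash in the dichotomy already established by Theorems A, A$^*$ and A$^{**}$: on a group endomorphism each of the three string numbers takes only the values $0$ and $\infty$. Hence for $k\in\N_+$ and $x\in\{0,\infty\}$ one has $kx=x$, so the three asserted identities collapse to the plain equalities $s(\f^k)=s(\f)$, $ns(\f^k)=ns(\f)$ and $s_0(\f^k)=s_0(\f)$. Invoking the dichotomy a second time, each such equality is equivalent to the matching equivalence between positivity statements, $s(\f)>0\Leftrightarrow s(\f^k)>0$, and likewise for $ns$ and $s_0$. Thus the whole corollary reduces to proving three equivalences.

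To prove them I would feed the internal characterizations from the three theorems, namely $s(\f)>0\Leftrightarrow\sc(\f)\not\subseteq\Per(\f)$, $ns(\f)>0\Leftrightarrow\sc(\f)\not\subseteq Q\Per(\f)$ and $s_0(\f)>0\Leftrightarrow\sc(\f)\cap\ker_\infty\f\neq0$. It then suffices to check that the four ingredients entering these conditions are left unchanged when $\f$ is replaced by $\f^k$:
\begin{equation*}
\Per(\f)=\Per(\f^k),\quad Q\Per(\f)=Q\Per(\f^k),\quad \ker_\infty\f=\ker_\infty\f^k,\quad \sc(\f)=\sc(\f^k).
\end{equation*}
The first three are routine. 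For $\Per$ one observes that $\f^n(x)=x$ forces $\f^{nk}(x)=x$ and, conversely, $\f^{nk}(x)=x$ already witnesses periodicity for $\f$. For $Q\Per$ one uses that a quasi-periodic point has a finite forward orbit, so the subsequence $\{\f^{nk}(x)\}_{n\in\N}$ must repeat, giving quasi-periodicity for $\f^k$; the reverse inclusion is immediate. Finally $\ker_\infty\f=\bigcup_n\ker\f^n=\bigcup_n\ker\f^{nk}=\ker_\infty\f^k$, since $\ker\f^n\subseteq\ker\f^{nk}$ for every $n$.

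The one delicate point, and the main obstacle, is the last equality $\sc(\f)=\sc(\f^k)$, because the surjective core is defined through a transfinite iteration (Lemma \ref{sc-existence}) rather than by a closed formula; so I would argue only through its defining maximality property. Since $\f(\sc(\f))=\sc(\f)$, also $\f^k(\sc(\f))=\sc(\f)$, so $\sc(\f)$ is a subgroup on which $\f^k$ is surjective, whence $\sc(\f)\subseteq\sc(\f^k)$. For the reverse inclusion set $D=\sc(\f^k)$, so $\f^k(D)=D$. From $\f^k(\f(D))=\f(\f^k(D))=\f(D)$ we see that $\f(D)$ is again a subgroup on which $\f^k$ is surjective, so by maximality $\f(D)\subseteq D$. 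Applying $\f^{k-1}$ to this gives the chain $D=\f^k(D)\subseteq\f^{k-1}(D)\subseteq\cdots\subseteq\f(D)\subseteq D$, forcing every inclusion to be an equality; in particular $\f(D)=D$, so $\f$ is surjective on $D=\sc(\f^k)$ and therefore $\sc(\f^k)\subseteq\sc(\f)$. With all four equalities in hand, substituting into the three characterizations yields the desired positivity equivalences, and the logarithmic law follows.
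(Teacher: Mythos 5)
Your proof is correct and follows the route the paper intends: the corollary is stated there with no proof precisely because, once Theorems A, A$^*$ and A$^{**}$ confine each string number to the values $0$ and $\infty$, the logarithmic law reduces to the positivity equivalences $s(\f)>0\Leftrightarrow s(\f^k)>0$ (and likewise for $ns$, $s_0$), which you then settle via the characterizations (iii). The only substantive ingredient you had to supply --- the invariance under powers, namely $\Per(\f^k)=\Per(\f)$, $Q\Per(\f^k)=Q\Per(\f)$, $\ker_\infty\f^k=\ker_\infty\f$ and especially $\sc(\f^k)=\sc(\f)$ --- is exactly the detail the paper leaves to the reader, and your maximality argument for the surjective core (showing $\f(D)\subseteq D$ forces the chain $D=\f^k(D)\subseteq\cdots\subseteq\f(D)\subseteq D$ to collapse) is sound.
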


The following is an immediate consequence of Lemma \ref{s=0<->s=0} and Theorems A, A$^*$ and A$^{**}$. It is a weak version of the Addition Theorem (see Fact \ref{ent}(C)), even if we saw that the Addition Theorem does not hold in full generality for the three string numbers.

\begin{corollary}
Let $G$ be an abelian group, $\f\in\End(G)$. Let $G_1,G_2$ be $\f$-invariant subgroups of $G$ such that $G=G_1\times G_2$, and let $\f_1=\f\restriction_{G_1}, \f_2=\f\restriction_{G_2}$. Then:
\begin{itemize}
\item[(a)] $s(\f)=s(\f_1)+s(\f_2)$;
\item[(b)] $ns(\f)=ns(\f_1)+ns(\f_2)$;
\item[(c)] $s_0(\f)=s_0(\f_1)+s_0(\f_2)$. \hfill$\qed$
\end{itemize}
\end{corollary}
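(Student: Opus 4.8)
The plan is to combine the dichotomy supplied by Theorems A, A$^*$ and A$^{**}$ (each string number takes only the values $0$ and $\infty$ on group endomorphisms) with the vanishing criterion of Lemma \ref{s=0<->s=0}. The starting observation is that, since $G_1$ and $G_2$ are $\f$-invariant with $G=G_1\times G_2$, the map $\f$ is carried by this identification precisely to $\f_1\times\f_2$: for $x_1\in G_1$, $x_2\in G_2$ one has $\f(x_1+x_2)=\f(x_1)+\f(x_2)=\f_1(x_1)+\f_2(x_2)$, using $\f$-invariance of each $G_i$. Hence Lemma \ref{s=0<->s=0} applies verbatim with $\Lambda_i=G_i$ and $\lambda_i=\f_i$.

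I would treat the three equalities in parallel, since the argument is identical. Take part (a). By Theorem A each of $s(\f)$, $s(\f_1)$, $s(\f_2)$ equals either $0$ or $\infty$; consequently $s(\f_1)+s(\f_2)$ equals $0$ exactly when $s(\f_1)=s(\f_2)=0$, and equals $\infty$ otherwise, by the convention $a+\infty=\infty$. If $s(\f_1)=s(\f_2)=0$, then Lemma \ref{s=0<->s=0}(a) gives $s(\f)=0=s(\f_1)+s(\f_2)$. If instead at least one summand is nonzero, hence equal to $\infty$ by Theorem A, then $s(\f_1)+s(\f_2)=\infty$, while Lemma \ref{s=0<->s=0}(a) forces $s(\f)\neq0$, so $s(\f)=\infty$ again by Theorem A. In both cases the desired equality holds.

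For (b) the same argument runs with Theorem A replaced by Theorem A$^*$ and Lemma \ref{s=0<->s=0}(a) by Lemma \ref{s=0<->s=0}(b); for (c) with Theorem A$^{**}$ and Lemma \ref{s=0<->s=0}(c). There is no genuine obstacle here: the whole content is the remark that the dichotomy collapses the sum $s(\f_1)+s(\f_2)$ onto the two values $0$ and $\infty$, so the additive identity reduces exactly to the equivalence ``$s(\f)=0\iff s(\f_1)=s(\f_2)=0$'' (and its analogues) furnished by the Lemma. The only point requiring care is the bookkeeping of the convention $a+\infty=\infty$, which is what makes the nonzero case come out correctly.
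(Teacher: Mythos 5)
Your proof is correct and takes essentially the same route as the paper, which derives this corollary as an immediate consequence of Lemma \ref{s=0<->s=0} combined with the zero-or-infinity dichotomies of Theorems A, A$^*$ and A$^{**}$. You have simply made explicit the identification $\f=\f_1\times\f_2$ and the two-case bookkeeping (both summands zero versus at least one infinite) that the paper leaves to the reader.
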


As another consequence of Theorems A and A$^*$ we find the following relation between the string number and the non-singular string number.

\begin{corollary}
Let $G$ be an abelian group and $\f\in\End(G)$ surjective. Then $s(\f)=ns(\f)$ if and only if either $G\neq Q\Per(\f)$ or $G=\Per(\f)$.
\end{corollary}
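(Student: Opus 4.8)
The plan is to reduce everything to the dichotomy furnished by Theorems A and A$^*$ together with the trivial inequality $s(\f)\geq ns(\f)$ (recorded in the introduction), after which the argument is purely Boolean. The first step is to translate both string numbers into statements about $\Per(\f)$ and $Q\Per(\f)$. Since $\f$ is surjective we have $\sc(\f)=G$, so Theorem \ref{s=0<->G=Per}(a) gives $s(\f)=0$ exactly when $G=\Per(\f)$, and Theorem \ref{s=0<->G=Per}(b) gives $ns(\f)=0$ exactly when $G=Q\Per(\f)$ (here one uses $\Per(\f),Q\Per(\f)\subseteq G$ to replace the inclusion by equality). Combined with Theorems A and A$^*$, this shows that both $s(\f)$ and $ns(\f)$ take values only in $\{0,\infty\}$, with $s(\f)=\infty \iff G\neq\Per(\f)$ and $ns(\f)=\infty \iff G\neq Q\Per(\f)$.

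Next I would exploit the inequality $s(\f)\geq ns(\f)$. Because both quantities lie in $\{0,\infty\}$ and $s(\f)\geq ns(\f)$, the pair $(s(\f),ns(\f))$ can only be one of $(0,0)$, $(\infty,0)$, or $(\infty,\infty)$; the configuration $(0,\infty)$ is excluded. Consequently the equality $s(\f)=ns(\f)$ fails in precisely one scenario, namely the mixed case $s(\f)=\infty$ and $ns(\f)=0$. Through the translation of the first step, this mixed case is exactly the conjunction $G\neq\Per(\f)$ and $G=Q\Per(\f)$.

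The final step is to negate. Since $s(\f)=ns(\f)$ holds if and only if we are \emph{not} in the mixed case, it holds if and only if $G=\Per(\f)$ or $G\neq Q\Per(\f)$, which is exactly the asserted condition. I do not anticipate a genuine obstacle here: the entire content has already been extracted in Theorems A and A$^*$ and their corollaries, and the statement is a formal consequence. The only point needing care is the logical bookkeeping, in particular using $s(\f)\geq ns(\f)$ to rule out the complementary configuration $(0,\infty)$ so that equality is governed by a single case, and then correctly negating the conjunction $(G\neq\Per(\f))\wedge(G=Q\Per(\f))$ into the disjunction $(G=\Per(\f))\vee(G\neq Q\Per(\f))$.
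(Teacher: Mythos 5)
Your proof is correct and follows essentially the same route as the paper: both arguments rest on the dichotomy of Theorems A and A$^*$, the characterizations $s(\f)=0\iff G=\Per(\f)$ and $ns(\f)=0\iff G=Q\Per(\f)$ for surjective $\f$ (the Corollaries, which you rederive via $\sc(\f)=G$), and the inequality $s(\f)\geq ns(\f)$ to isolate the single mixed case $(s,ns)=(\infty,0)$. The paper merely packages the same Boolean reasoning as two contrapositive implications instead of your case enumeration.
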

\begin{proof}
If $G=Q\Per(\f)>\Per(\f)$, then $s(\f)=\infty$ and $ns(\f)=0$ by the Corollaries of Theorem A and Theorem A$^*$.
Assume now that $s(\f)>ns(\f)$; by Theorem A and A$^*$ it follows that $s(\f)=\infty$ and $ns(\f)=0$. By the corollaries of Theorem A and Theorem A$^*$, $G=Q\Per(\f)>\Per(\f)$.
\end{proof}

We have already seen that monotonicity under taking restrictions to invariant subgroups is always available for the string numbers. Example \ref{s0-non-monotone} shows that the null string number is not monotone under taking induced endomorphisms on quotients, even for surjective endomorphisms.
The next theorem shows that instead the string number and the non-singular string number are monotone under taking endomorphisms induced on a quotient by a surjective endomorphism.

\begin{theorem}\label{quotients}
Let $G$ be an abelian group, $\f\in\End(G)$ surjective, $H$ a $\f$-invariant subgroup of $G$ and $\overline\f:G/H\to G/H$ the endomorphism induced by $\f$. Then $s(\f)\geq s(\overline\f)$ and $ns(\f)\geq ns(\overline\f)$. 
\end{theorem}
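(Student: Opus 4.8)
The plan is to exploit the value dichotomy established in Theorems A and A$^*$, which turns the two asserted inequalities into mere \emph{propagation of positivity} statements. First I would record that $\overline\f$ is itself surjective: since $\pi\circ\f=\overline\f\circ\pi$ with both $\pi$ and $\f$ surjective, $\overline\f\circ\pi$ is surjective, and surjectivity of $\pi$ then forces $\overline\f$ to be surjective. Consequently $\sc(\f)=G$ and $\sc(\overline\f)=G/H$. Because $s$ and $ns$ take only the values $0$ and $\infty$ (Theorems A and A$^*$), proving $s(\f)\geq s(\overline\f)$ and $ns(\f)\geq ns(\overline\f)$ reduces to the implications $s(\overline\f)>0\Rightarrow s(\f)>0$ and $ns(\overline\f)>0\Rightarrow ns(\f)>0$; the complementary cases $s(\overline\f)=0$ and $ns(\overline\f)=0$ are trivial.

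To carry this out I would lift a single \emph{witness point} rather than a whole string. For the first implication, assume $s(\overline\f)>0$; by Theorem \ref{s=0<->G=Per}(a) applied to the surjective $\overline\f$ one has $G/H=\sc(\overline\f)\neq\Per(\overline\f)$, so there is $\bar x\in G/H$ that is not periodic for $\overline\f$. Choose any $x\in G$ with $\pi(x)=\bar x$. If $x$ were periodic, say $\f^n(x)=x$ with $n\in\N_+$, then applying $\pi$ would give $\overline\f^n(\bar x)=\bar x$, a contradiction; hence $x$ is a non-periodic point of $\f$, so $\sc(\f)=G\neq\Per(\f)$ and Theorem \ref{s=0<->G=Per}(a) yields $s(\f)>0$. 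The second implication is identical with ``periodic'' replaced by ``quasi-periodic'': a non-quasi-periodic $\bar x$ furnished by Theorem \ref{s=0<->G=Per}(b) lifts to some $x$, and $\f^n(x)=\f^m(x)$ with $n<m$ would project to $\overline\f^n(\bar x)=\overline\f^m(\bar x)$, again impossible, so $x\notin Q\Per(\f)$ and Theorem \ref{s=0<->G=Per}(b) gives $ns(\f)>0$.

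The main obstacle is one I plan to sidestep rather than confront. The naive strategy of lifting a string $\overline S=\{\bar x_n\}$ of $\overline\f$ term by term to a pseudostring $S=\{x_n\}$ of $\f$ with $\pi(x_n)=\bar x_n$ can genuinely fail: such a lift requires $\ker\overline\f\subseteq\pi(\ker\f)$, equivalently $\f^{-1}(H)\subseteq\ker\f+H$, which need not hold because $\f$ surjective on $G$ does not force $\f(H)=H$. The dichotomy avoids this entirely, since it suffices to transport the \emph{qualitative} property of admitting a non-periodic (resp.\ non-quasi-periodic) point, and these properties descend to the quotient in the trivial direction used above. This also clarifies why the analogous statement fails for the null string number (Example \ref{s0-non-monotone}): there the relevant witness is membership in $\ker_\infty\f\setminus\{0\}$, and $\overline\f^k(\bar x)=0$ emphatically does \emph{not} imply $\f^k(x)=0$, so no lift of the witness point survives.
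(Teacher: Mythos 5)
Your proof is correct and takes essentially the same route as the paper's: both use the zero--infinity dichotomy of Theorems A and A$^*$ to reduce the inequalities to a single implication, note that $\overline\f$ inherits surjectivity, and then invoke the characterization of vanishing via Theorem \ref{s=0<->G=Per} (equivalently, the Corollaries of Theorems A and A$^*$) together with the fact that (quasi-)periodicity of points descends along the projection $\pi$. The only cosmetic difference is that the paper proves ``$s(\f)=0$ implies $s(\overline\f)=0$'' directly, whereas you prove the contrapositive by lifting a non-(quasi-)periodic witness point; these are the same argument.
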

\begin{proof}
By Theorems A and A$^*$ it suffices to prove that $s(\f)=0$ implies $s(\overline\f)=0$ and that $ns(\f)=0$ implies $ns(\overline \f)=0$. 
Since $\f$ is surjective, $\overline \f$ is surjective as well. By the Corollary of Theorem A, $s(\f)=0$ if and only if $G=\Per(\f)$. Then $G/H=\Per(\overline\f)$, that is, $s(\overline\f)=0$ by the same corollary. By the Corollary of Theorem A$^*$, $ns(\f)=0$ if and only if $G=Q\Per(\f)$. Then $G/H=Q\Per(\overline\f)$, that is, $ns(\overline\f)=0$ by the same corollary.
\end{proof}

Example \ref{Jp} shows that this monotonicity law is not satisfied by arbitrary group endomorphisms. 

\begin{example}\label{Jp}
Let $p$ be a prime and consider $\J_p$. Then $\Z$ is dense in $\J_p$, and $\J_p/\Z\cong\Q^{(\cont)}\oplus\bigoplus_{q\neq p}\Z(q^\infty)$. By Example \ref{J-ex} $s(\mu_p)=0$, while Theorem \ref{s(mup)=0} yields $s(\overline\mu_p)=\infty$, where $\overline\mu_p:\J_p/\Z\to \J_p/\Z$.

If now we consider the quotient $\Q^{(\cont)}$ of $\J_p/\Z$ (so it is also a quotient of $\J_p$), then the endomorphism $\overline\mu_p':\Q^{(\cont)}\to \Q^{(\cont)}$ induced by $\mu_p$ is still the multiplication by $p$. Therefore, $ns(\overline\mu_p')=s(\overline\mu_p')=\infty$, while $ns(\mu_p)=s(\mu_p)=0$ by Corollary \ref{sc-rev}(b). Note that $s_0(\overline\mu_p)=s_0(\mu_p)=0$ by Corollary \ref{inj->s0=0}.
\end{example}

Nevertheless, we leave open the following question.

\begin{problem}
Describe the endomorphisms $\f$ of abelian groups $G$ and the $\f$-invariant subgroups $H$ of $G$, such that $s(\f)\geq s(\overline\f)$ (respectively, $ns(\f)\geq ns(\overline\f)$), where $\overline\f:G/H\to G/H$ is the endomorphism induced by $\f$.
\end{problem}

\end{document}